\newtheorem{thm}{Theorem}[section]
\newtheorem{lem}[thm]{Lemma}
\newtheorem{prop}[thm]{Proposition}
\theoremstyle{definition}
\newtheorem{defn}[thm]{Definition}
\theoremstyle{remark}
\newtheorem{rem}[thm]{Remark}
\theoremstyle{conclusion}
\theoremstyle{question}
\newtheorem{qu}[thm]{Question}
\numberwithin{equation}{section}
\begin{document}
\title[Bilinear and multi-parameter Hilbert Transforms]{$L^{p}$ estimates for bilinear and multi-parameter Hilbert transforms}

\author{Wei Dai and Guozhen Lu}

\address{School of Mathematical Sciences, Beijing Normal University, Beijing 100875, P. R. China}
\email{daiwei@bnu.edu.cn}

\address{Department of Mathematics, Wayne State University, Detroit, MI 48202, U. S. A.}
\email{gzlu@math.wayne.edu}

\thanks{Research of this work was partly supported by grants from the NNSF of China, the China Postdoctoral Science Foundation and a US NSF grant. \\Corresponding Author: Guozhen Lu at gzlu@math.wayne.edu}

\begin{abstract}
C. Muscalu, J. Pipher, T. Tao and C. Thiele proved in \cite{MPTT1} that the standard bilinear and bi-parameter Hilbert transform does not satisfy any $L^{p}$ estimates. They also raised a question asking if a bilinear and bi-parameter multiplier operator defined by
$$
 T_{m}(f_{1},f_{2})(x):=\int_{\mathbb{R}^{4}}m(\xi,\eta)\hat{f_{1}}(\xi_{1},\eta_{1})\hat{f_{2}}(\xi_{2},\eta_{2})e^{2\pi ix\cdot((\xi_{1},\eta_{1})+(\xi_{2},\eta_{2}))}d\xi d\eta
$$
satisfies any $L^p$ estimates, where the symbol $m$ satisfies
$$
  |\partial_{\xi}^{\alpha}\partial_{\eta}^{\beta}m(\xi,\eta)|\lesssim\frac{1}{dist(\xi,\Gamma_{1})^{|\alpha|}}\cdot\frac{1}{dist(\eta,\Gamma_{2})^{|\beta|}}
$$
for sufficiently many multi-indices $\alpha=(\alpha_{1},\alpha_{2})$ and $\beta=(\beta_{1},\beta_{2})$, $\Gamma_{i}$ ($i=1,2$) are subspaces in $\mathbb{R}^{2}$ and $dim \, \Gamma_{1}=0, \, dim \, \Gamma_{2}=1$. P. Silva answered partially this question in \cite{S} and proved that $T_{m}$ maps $L^{p_1}\times L^{p_2}\rightarrow L^{p}$ boundedly when $\frac{1}{p_1}+\frac{1}{p_2}=\frac{1}{p}$ with $p_1, p_2>1$, $\frac{1}{p_1}+\frac{2}{p_2}<2$ and $\frac{1}{p_2}+\frac{2}{p_1}<2$. One observes that the admissible range here for these tuples $(p_1,p_2,p)$ is a proper subset contained in the admissible range of BHT. 

In this paper, we establish the same $L^{p}$ estimates as BHT in the full range for the bilinear and multi-parameter Hilbert transforms with arbitrary symbols satisfying appropriate decay assumptions (Theorem 1.3). Moreover, we also establish the same $L^p$ estimates as BHT for certain modified bilinear and bi-parameter Hilbert transforms with $dim \, \Gamma_{1}=dim \, \Gamma_{2}=1$   but with a slightly better decay than that for the bilinear and bi-parameter Hilbert transform  (Theorem 1.4).
\end{abstract}
\maketitle {\small {\bf Keywords:} Bilinear and multi-parameter Hilbert transforms; $L^{p}$ estimates; polydiscs.\\

{\bf 2010 MSC} Primary: 42B20; Secondary: 42B15.}

\section{Introduction}

The bilinear Hilbert transform is defined by
\begin{equation}\label{1.1}
  BHT(f_{1},f_{2})(x):=p. \, v. \, \int_{\mathbb{R}}f_{1}(x-t)f_{2}(x+t)\frac{dt}{t},
\end{equation}
or equivalently, it can also be written as the bilinear multiplier operator
\begin{equation}\label{1.2}
  BHT: \,\,\, (f_{1},f_{2})\mapsto\int_{\xi<\eta}\hat{f_{1}}(\xi)\hat{f_{2}}(\eta)e^{2\pi ix(\xi+\eta)}d\xi d\eta,
\end{equation}
where $f_{1}$ and $f_{2}$ are Schwartz functions on $\mathbb{R}$. In \cite{LT1,LT2}, M. Lacey and C. Thiele proved the following $L^{p}$ estimates for bilinear Hilbert transform.
\begin{thm}\label{BHT}(\cite{LT1,LT2})
The bilinear operator BHT maps $L^{p}(\mathbb{R})\times L^{q}(\mathbb{R})$ into $L^{r}(\mathbb{R})$ boundedly for any $1<p, \, q\leq\infty$ with $\frac{1}{p}+\frac{1}{q}=\frac{1}{r}$ and $\frac{2}{3}<r<\infty$.
\end{thm}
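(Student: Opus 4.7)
The plan is to follow the time-frequency analysis paradigm of Lacey and Thiele. First, by duality, I would replace the goal of bounding $\|BHT(f_1,f_2)\|_{L^r}$ with estimates on the trilinear form $\Lambda(f_1,f_2,f_3) := \int_{\mathbb{R}} BHT(f_1,f_2)(x)\, f_3(x)\,dx$, seeking $|\Lambda(f_1,f_2,f_3)| \lesssim \|f_1\|_p \|f_2\|_q \|f_3\|_{r'}$ for $f_3 \in L^{r'}$. A smooth Whitney decomposition of the half-plane $\{\xi<\eta\}$ relative to its singular diagonal then rewrites $\Lambda$ as a superposition of model trilinear forms of the shape
\begin{equation*}
\sum_{P} \frac{1}{|I_P|^{1/2}} \prod_{j=1}^{3} \langle f_j, \varphi_{P_j}\rangle,
\end{equation*}
where $P=(P_1,P_2,P_3)$ ranges over \emph{tri-tiles}: triples of dyadic time-frequency rectangles of area one sharing a spatial interval $I_P$, with frequency components $\omega_{P_1},\omega_{P_2},\omega_{P_3}$ of common length $|I_P|^{-1}$ positioned so that $0\in\omega_{P_1}+\omega_{P_2}+\omega_{P_3}$, and $\varphi_{P_j}$ is an $L^2$-normalized wave packet adapted to $P_j$.

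The heart of the argument is combinatorial. I would organize the tri-tiles into \emph{trees}, i.e.\ collections with a common top into which all members fit via the standard partial orders on tiles, and prove single-tree estimates of the form
\begin{equation*}
\Bigl| \sum_{P \in T} \frac{1}{|I_P|^{1/2}} \prod_{j=1}^{3} \langle f_j, \varphi_{P_j}\rangle \Bigr| \lesssim |I_T| \prod_{j=1}^{3} S_j(T),
\end{equation*}
where the \emph{size} $S_j(T)$ measures the $L^2$-normalized mass of $f_j$ along the frequency strip of $T$. Complementing these, an \emph{energy} estimate bounds the total top-length of a family of trees with sizes $\sim 2^{-n_j}$ by $\lesssim 2^{2n_j}\|f_j\|_2^2$. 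The Lacey--Thiele selection algorithm then extracts maximal trees of geometrically decreasing size, and summing tree estimates yields control of $\Lambda$ in terms of the sizes and energies of the three inputs.

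Because the target range $\tfrac{2}{3}<r<\infty$ dips below $r=1$, standard strong interpolation is unavailable. I would instead establish a restricted weak-type inequality
\begin{equation*}
|\Lambda(\mathbf{1}_{E_1},\mathbf{1}_{E_2},\mathbf{1}_{E_3'})| \lesssim |E_1|^{1/p}|E_2|^{1/q}|E_3|^{1/r'},
\end{equation*}
where $E_3'\subset E_3$ is a suitably chosen major subset obtained by excising the set where the maximal or square functions of the other inputs are large, and then conclude via the multilinear Marcinkiewicz interpolation theorem of Janson. The freedom to choose which index is "exceptional" is what permits the extension below $r=1$.

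The main obstacle is the modulation invariance of $BHT$: the symbol degenerates along a line through the origin, precluding any Calder\'on--Zygmund style purely spatial decomposition and forcing the use of genuine time-frequency methods. Running the tree selection so as to produce independent geometric decay in all three size parameters, and simultaneously balancing size against energy to absorb the endpoint $r<1$ through the right choice of major subset, is the delicate technical core of the proof.
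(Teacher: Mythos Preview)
Your outline is a faithful sketch of the Lacey--Thiele argument, but note that the paper does not actually prove this theorem: it is quoted as a known result from \cite{LT1,LT2} and serves only as background and motivation. There is therefore no proof in the paper to compare against.

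That said, the machinery you describe---discretization into tri-tiles, the tree/size/energy formalism, the generic estimate $|\Lambda_{\mathbb P}|\lesssim\prod_j (\text{size}_j)^{\theta_j}(\text{energy}_j)^{1-\theta_j}$, and the passage from restricted weak-type to strong-type via multilinear interpolation---is exactly what the paper reviews in Sections~2 and~3 (citing \cite{MTT2,MS,MTT1}) and then deploys to prove its own Theorems~1.3 and~1.4 in the bi-parameter setting. So your proposal is both correct as a proof of the cited result and well aligned with the toolkit the paper builds on.
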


There are lots of works related to bilinear operators of BHT type. J. Gilbert and A. Nahmod \cite{GN} and F. Bernicot \cite{Be} proved that the same $L^{p}$ estimates as BHT are valid for bilinear operators with more general symbols. Uniform estimates were obtained by C. Thiele \cite{Thiele}, L. Grafakos and X. Li \cite{GL} and X. Li \cite{Li}. A maximal variant of Theorem \ref{BHT} was proved by M. Lacey \cite{Lacey}. In C. Muscalu, C. Thiele and T. Tao \cite{MTT2} and J. Jung \cite{Jung}, the authors investigated various trilinear variants of the bilinear Hilbert transform. For more related results involving estimates for multi-linear singular multiplier operators, we refer to the works, e.g., \cite{CJ, CM1, CM2, FS, GT, GT1, Jo, KS, MS, MTT1, Thiele1} and the references therein.

In multi-parameter cases, there are also large amounts of literature devoted to studying the estimates of multi-parameter and multi-linear operators (see \cite{CL, DL, DT,  HL, Kesler, Luthy, MS, MPTT1, MPTT2,S} and the references therein). In the bilinear and bi-parameter cases, let $\Gamma_{i}$ ($i=1,2$) be subspaces in $\mathbb{R}^{2}$, we consider operators $T_{m}$ defined by
\begin{equation}\label{1.3}
  T_{m}(f_{1},f_{2})(x):=\int_{\mathbb{R}^{4}}m(\xi,\eta)\hat{f_{1}}(\xi_{1},\eta_{1})\hat{f_{2}}(\xi_{2},\eta_{2})e^{2\pi ix\cdot((\xi_{1},\eta_{1})+(\xi_{2},\eta_{2}))}d\xi d\eta,
\end{equation}
where the symbol $m$ satisfies\footnote{Throughout this paper, $A\lesssim B$ means that there exists a universal constant $C>0$ such that $A\leq CB$. If necessary, we use explicitly $A\lesssim_{\star,\cdots,\star}B$ to indicate that there exists a positive constant $C_{\star,\cdots,\star}$ depending only on the quantities appearing in the subscript continuously such that $A\leq C_{\star,\cdots,\star}B$.}
\begin{equation}\label{1.4}
  |\partial_{\xi}^{\alpha}\partial_{\eta}^{\beta}m(\xi,\eta)|\lesssim\frac{1}{dist(\xi,\Gamma_{1})^{|\alpha|}}\cdot\frac{1}{dist(\eta,\Gamma_{2})^{|\beta|}}
\end{equation}
for sufficiently many multi-indices $\alpha=(\alpha_{1},\alpha_{2})$ and $\beta=(\beta_{1},\beta_{2})$. If $dim \, \Gamma_{1}=dim \, \Gamma_{2}=0$, C. Muscalu, J. Pipher, T. Tao and C. Thiele proved in \cite{MPTT1,MPTT2} that H\"{o}lder type $L^{p}$ estimates are available for $T_{m}$; however, if $dim \, \Gamma_{1}=dim \, \Gamma_{2}=1$, let $T_{m}$ be the double bilinear Hilbert transform on polydisks $BHT\otimes BHT$ defined by
\begin{equation}\label{1.5}
  BHT\otimes BHT(f_{1},f_{2})(x,y):=p. \, v. \, \int_{\mathbb{R}^{2}}f_{1}(x-s,y-t)f_{2}(x+s,y+t)\frac{ds}{s}\frac{dt}{t},
\end{equation}
they also proved in \cite{MPTT1} that the operator $BHT\otimes BHT$ does not satisfy any $L^{p}$ estimates of H\"{o}lder type by constructing a counterexample. In fact, consider bounded functions $f_{1}(x,y)=f_{2}(x,y)=e^{ixy}$, one has formally
\begin{equation*}
  BHT\otimes BHT(f_{1},f_{2})(x,y)=(f_{1}\cdot f_{2})(x,y)\int_{\mathbb{R}^{2}}\frac{e^{2ist}}{st}dsdt=i\pi(f_{1}\cdot f_{2})(x,y)\int_{\mathbb{R}}\frac{sgn(s)}{s}ds,
\end{equation*}
then localize functions $f_{1}$, $f_{2}$ and let $f^{N}_{1}(x,y)=f^{N}_{2}(x,y)=e^{ixy}\chi_{[-N,N]}(x)\chi_{[-N,N]}(y)$, one can verify the pointwise estimate
\begin{equation}\label{1.6}
  |BHT\otimes BHT(f^{N}_{1},f^{N}_{2})(x,y)|\geq|\int_{-\frac{N}{10}}^{\frac{N}{10}}\int_{-\frac{N}{10}}^{\frac{N}{10}}\frac{e^{2ist}}{st}dsdt|+O(1)\geq C\log N+O(1)
\end{equation}
for every $x,y\in[-\frac{N}{100},\frac{N}{100}]$ and sufficiently large $N\in\mathbb{Z}^{+}$, which indicates that no H\"{o}lder type $L^{p}$ estimates are available for the bilinear operator $BHT\otimes BHT$. When $dim \, \Gamma_{1}=0$ and $dim \, \Gamma_{2}=1$, C. Muscalu, J. Pipher, T. Tao and C. Thiele raised the following problem in Question 8.2 in \cite{MPTT1}.
\begin{qu}\label{open problem}(\cite{MPTT1})
Let $dim \, \Gamma_{1}=0$ and $dim \, \Gamma_{2}=1$ with $\Gamma_{2}$ non-degenerate in the sense of \cite{MTT1}. If $m$ is a multiplier satisfying \eqref{1.4}, does the corresponding operator $T_{m}$ defined by \eqref{1.3} satisfy any $L^{p}$ estimates?
\end{qu}
In \cite{S}, P. Silva answered this question partially and proved that $T_{m}$ defined by \eqref{1.3}, \eqref{1.4} with $dim \, \Gamma_{1}=0$ and $dim \, \Gamma_{2}=1$ maps $L^{p}\times L^{q}\rightarrow L^{r}$ boundedly when $\frac{1}{p}+\frac{1}{q}=\frac{1}{r}$ with $p,q>1$, $\frac{1}{p}+\frac{2}{q}<2$ and $\frac{1}{q}+\frac{2}{p}<2$. One should observe that the admissible range for these tuples $(p,q,r)$ is a proper subset of the region $p,q>1$ and $\frac{3}{4}<r<\infty$, which is also properly contained in the admissible range of BHT (see Theorem \ref{BHT}).

Naturally, we may wonder whether the bi-parameter bilinear operator $T_{m}$ given by \eqref{1.3}, \eqref{1.4} (with appropriate decay assumptions on the symbol $m$ and singularity sets $\Gamma_{1}$, $\Gamma_{2}$ satisfying $dim \, \Gamma_{1}=0  \, \text{or} \,1$, $dim \, \Gamma_{2}=1$) satisfies the same $L^{p}$ estimates as BHT.

To study this problem, we must find the implicit decay assumptions on symbol $m$ to preclude the existence of those kinds of counterexamples constructed in the above \eqref{1.6} for $BHT\otimes BHT$. To this end, let us consider first the bilinear operator $T_{m}\otimes BHT$ of tensor product type, which is defined by
\begin{equation}\label{1.7}
  T_{m}\otimes BHT(f_{1},f_{2})(x,y):=p. \, v. \, \int_{\mathbb{R}^{2}}f_{1}(x-s,y-t)f_{2}(x+s,y+t)\frac{K(s)}{t}dsdt,
\end{equation}
where the symbol $m(\xi^{1}_{1},\xi^{1}_{2})=m(\zeta):=\hat{K}(\zeta)$ with $\zeta:=\xi^{1}_{1}-\xi^{1}_{2}$ has one dimensional non-degenerate singularity set $\Gamma_{1}$. Let $f_{1}(x,y)=f_{2}(x,y)=e^{ixy}$, one can easily derive that
\begin{equation}\label{1.8}
  T_{m}\otimes BHT(f_{1},f_{2})(x,y)=(f_{1}\cdot f_{2})(x,y)\int_{\mathbb{R}^{2}}K(s)\frac{e^{2ist}}{t}dsdt.
\end{equation}
From \eqref{1.8} and the above counterexample constructed in \eqref{1.6} for operator $BHT\otimes BHT$, we observe that one sufficient condition for precluding the existence of these kinds of counterexamples is $K\in L^{1}$, or equivalently, $m=\hat{K}\in\mathcal{F}(L^{1})$. From the Riemann-Lebesgue theorem, we know that a necessary condition for $m\in\mathcal{F}(L^{1})$ is $m(\zeta)\rightarrow0$ as $|\zeta|\rightarrow\infty$. Moreover, if $K\in L^{1}(\mathbb{R})$ is odd, one can even derive that $|\int_{\mathbb{R}}\frac{m(\zeta)}{\zeta}d\zeta|\lesssim\|K\|_{L^{1}}$ (this indicates that there are many uniformly continuous functions with logarithmic decay rate do not belong to $\mathcal{F}(L^{1})$). Therefore, in order to guarantee that the same $L^{p}$ estimates as the bilinear Hilbert transform are available for bilinear operators $T_{m}\otimes BHT$ and $BHT\otimes BHT$, we need some appropriate decay assumptions on the symbol.

The purpose of this paper is to prove the same $L^{p}$ estimates as BHT for modified bilinear operators $T^{\varepsilon}_{m}\otimes BHT$ and $BHT^{\varepsilon}\otimes BHT$ with arbitrary non-smooth symbols which decay faster than the logarithmic rate.

For $d\geq2$, any two generic vectors $\xi_{1}=(\xi_{1}^{i})_{i=1}^{d}$, $\xi_{2}=(\xi_{2}^{i})_{i=1}^{d}$ in $\mathbb{R}^{d}$ generates naturally the following collection of $d$ vectors in $\mathbb{R}^{2}$:
\begin{equation}\label{1.9}
    \bar{\xi}_{1}=(\xi^{1}_{1},\xi^{1}_{2}), \,\,\,\,\,\, \bar{\xi}_{2}=(\xi^{2}_{1},\xi^{2}_{2}), \,\,\,\,\, \cdots, \,\,\,\,\, \bar{\xi}_{d}=(\xi^{d}_{1},\xi^{d}_{2}).
\end{equation}
For arbitrary small $\varepsilon>0$, let $m^{\varepsilon}=m^{\varepsilon}(\xi)=m^{\varepsilon}(\bar{\xi})$ be a bounded symbol in $L^{\infty}(\mathbb{R}^{2d})$ that is smooth away from the subspaces $\Gamma_{1}\cup\cdots\cup\Gamma_{d-1}\cup\Gamma_{d}$ and satisfying
\begin{equation}\label{1.10}
    dist(\bar{\xi}_{d},\Gamma_{d})^{|\alpha_{d}|}\cdot\int_{\mathbb{R}^{2(d-1)}}
    \frac{|\partial^{\alpha_{1}}_{\bar{\xi}_{1}}\cdots\partial^{\alpha_{d}}_{\bar{\xi}_{d}}m^{\varepsilon}(\bar{\xi})|}{\prod_{i=1}^{d-1}
    dist(\bar{\xi}_{i},\Gamma_{i})^{2-|\alpha_{i}|}}d\bar{\xi}_{1}\cdots d\bar{\xi}_{d-1}\leq B(\varepsilon)<+\infty
\end{equation}
for sufficiently many multi-indices $\alpha_{1},\cdots,\alpha_{d}$, where the constants $B(\varepsilon)\rightarrow+\infty$ as $\varepsilon\rightarrow0$, $dim \, \Gamma_{i}=0$ for $i=1,\cdots,d-1$ and $\Gamma_{d}:=\{(\xi^{d}_{1},\xi^{d}_{2})\in\mathbb{R}^{2}: \xi^{d}_{1}=\xi^{d}_{2}\}$. Denote by $T_{m^{\varepsilon}}^{(d)}$ the bilinear multiplier operator defined by
\begin{equation}\label{1.11}
   T_{m^{\varepsilon}}^{(d)}(f_{1},f_{2})(x):=\int_{\mathbb{R}^{2d}}m^{\varepsilon}(\xi)\hat{f_{1}}(\xi_{1})\hat{f_{2}}(\xi_{2})
    e^{2\pi ix\cdot(\xi_{1}+\xi_{2})}d\xi.
\end{equation}

Our result for bilinear operators $T^{(d)}_{m^{\varepsilon}}$ satisfying \eqref{1.10} and \eqref{1.11} is the following Theorem \ref{main1}.
\begin{thm}\label{main1}
For any $d\geq2$ and $\varepsilon>0$, the bilinear, $d$-parameter multiplier operator $T^{(d)}_{m^{\varepsilon}}$ maps $L^{p_{1}}(\mathbb{R}^{d})\times L^{p_{2}}(\mathbb{R}^{d})\rightarrow L^{p}(\mathbb{R}^{d})$ boundedly for any $1<p_{1}, \, p_{2}\leq\infty$ with $\frac{1}{p}=\frac{1}{p_{1}}+\frac{1}{p_{2}}$ and $\frac{2}{3}<p<\infty$. The implicit constants in the bounds depend only on $p_{1}$, $p_{2}$, $p$, $\varepsilon$, $d$ and tend to infinity as $\varepsilon\rightarrow0$.
\end{thm}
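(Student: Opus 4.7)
The basic strategy is to exploit the hybrid nature of condition \eqref{1.10}: in the first $d-1$ frequency pairs the symbol has an $L^{1}$-type summability (stronger than pure Coifman--Meyer) after dyadic decomposition, while in $\bar\xi_{d}$ it is a BHT-type symbol. I would decompose $m^{\varepsilon}$ dyadically in the first $d-1$ parameters, prove uniform BHT-range bounds for each piece, and then sum using the $L^{1}$-summability furnished by \eqref{1.10}.

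Concretely, after translating so that $\Gamma_{i}=\{0\}$ for $i\le d-1$, I fix a radial $\psi\in C_{c}^{\infty}(\mathbb{R}^{2})$ supported in an annulus with $\sum_{k\in\mathbb{Z}}\psi(\bar\xi/2^{k})=1$ on $\mathbb{R}^{2}\setminus\{0\}$, and set
\[
m^{\varepsilon}(\bar\xi)=\sum_{\vec k\in\mathbb{Z}^{d-1}} m^{\varepsilon}_{\vec k}(\bar\xi),\qquad m^{\varepsilon}_{\vec k}(\bar\xi):=m^{\varepsilon}(\bar\xi)\prod_{i=1}^{d-1}\psi(\bar\xi_{i}/2^{k_{i}}).
\]
Each $m^{\varepsilon}_{\vec k}$ is frequency-supported in $|\bar\xi_{i}|\sim 2^{k_{i}}$ for $i\le d-1$ and inherits from $m^{\varepsilon}$ a BHT-type derivative bound in $\bar\xi_{d}$. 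Integrating \eqref{1.10} over the annular supports would yield
\[
\sum_{\vec k\in\mathbb{Z}^{d-1}}\|m^{\varepsilon}_{\vec k}\|_{\mathcal{N}}\lesssim B(\varepsilon),
\]
where $\|\cdot\|_{\mathcal{N}}$ is a Coifman--Meyer-normalized norm that measures the strength of $m^{\varepsilon}_{\vec k}$ as a ``paraproduct $\otimes$ BHT'' symbol.

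The main analytic step would be to show that, for each fixed $\vec k$, the operator $T^{(d)}_{m^{\varepsilon}_{\vec k}}$ satisfies the full BHT range of bounds $L^{p_{1}}\times L^{p_{2}}\to L^{p}$ with a constant proportional to $\|m^{\varepsilon}_{\vec k}\|_{\mathcal{N}}$ and independent of $\vec k$. Since $m^{\varepsilon}_{\vec k}$ is a Coifman--Meyer symbol in the first $d-1$ parameters and a BHT-type symbol in the last, I would carry out the standard time--frequency discretization: a BHT-style Whitney decomposition of the diagonal singularity in $\bar\xi_{d}$ following Lacey--Thiele \cite{LT1,LT2}, together with lacunary cube decompositions in the remaining parameters following Muscalu--Pipher--Tao--Thiele \cite{MPTT1,MPTT2}. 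The resulting discrete model sum would then be estimated by iterated vector-valued inequalities: scalar BHT in the $d$-th variable combined with Fubini/Minkowski handles the Banach range $p\ge 1$, while the quasi-Banach range $p<1$ requires upgrading to vector-valued BHT bounds with $\ell^{r}$-valued targets, together with iterated vector-valued paraproduct inequalities in the first $d-1$ directions.

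The principal obstacle is this quasi-Banach vector-valued step for $p<1$: the Lacey--Thiele tile-selection argument must be carried through with $\ell^{r}$-valued sizes and energies, and the implicit constants shown to be uniform both in $\vec k$ and across the full BHT H\"older range. A secondary technical point is to verify that $\|\cdot\|_{\mathcal{N}}$ genuinely dominates the $\ell^{1}$-summability of the coefficients produced by the discrete model sum; this matches precisely the weight structure $dist(\bar\xi_{i},\Gamma_{i})^{-(2-|\alpha_{i}|)}$ in \eqref{1.10}, explaining why this particular form is imposed. Once both the uniform-in-$\vec k$ BHT bound and the $\ell^{1}$-summability in $\vec k$ are in place, the triangle inequality yields $\|T^{(d)}_{m^{\varepsilon}}\|_{L^{p_{1}}\times L^{p_{2}}\to L^{p}}\lesssim_{p_{1},p_{2},p,d} B(\varepsilon)$ over the full range $1<p_{1},p_{2}\le\infty$, $2/3<p<\infty$, as claimed.
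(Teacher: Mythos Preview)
Your overall decomposition strategy---localize to dyadic annuli in $\bar\xi_{1},\dots,\bar\xi_{d-1}$, use the $\ell^{1}$-summability encoded in \eqref{1.10}, and run a BHT analysis in $\bar\xi_{d}$---matches the paper's architecture. The paper also decomposes $\widetilde{\mathbb{P}'}=\bigcup_{k'}\widetilde{\mathbb{P}'_{k'}}$ and sums over scales $k'$ via the coefficient bound $\sum_{k'}C^{\varepsilon}_{k'}\lesssim_{\varepsilon}1$ from Lemma~\ref{Fourier coefficient1}.

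The genuine gap is your final step. You write that ``the triangle inequality yields $\|T^{(d)}_{m^{\varepsilon}}\|_{L^{p_1}\times L^{p_2}\to L^p}\lesssim B(\varepsilon)$,'' but for $p<1$ the $p$-triangle inequality only gives control by $\bigl(\sum_{\vec k}\|m^{\varepsilon}_{\vec k}\|_{\mathcal N}^{p}\bigr)^{1/p}$, and \eqref{1.10} furnishes $\ell^{1}$, not $\ell^{p}$, summability in $\vec k$. Your proposed remedy---$\ell^{r}$-valued BHT bounds---is exactly the machinery behind Silva's approach \cite{S}, which (under the weaker hypothesis \eqref{1.4}) produced only the sub-range $\frac{1}{p_1}+\frac{2}{p_2}<2$, $\frac{2}{p_1}+\frac{1}{p_2}<2$. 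You would need to argue carefully that the single-scale structure of each $m^{\varepsilon}_{\vec k}$ lets you avoid Silva's obstruction, and you have not indicated how.

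The paper sidesteps the quasi-Banach summation entirely by working at the level of the \emph{trilinear form} and proving \emph{restricted weak type} estimates (Proposition~\ref{RW-type equivalent}). Since $|\Lambda^{\varepsilon}_{\vec{\mathbb P}}(f_1,f_2,f_3)|$ is linear in each argument, the $\ell^{1}$-summability in $k'$ applies directly with no loss; see \eqref{4.12}--\eqref{4.23}. Concretely: for each fixed tri-tile $\widetilde{P'}$ the paper applies the \emph{scalar} size--energy estimate (Proposition~\ref{tile norm estimates}) in the $x_{2}$ variable, controls the sizes via an exceptional-set construction built from the strong maximal function, bounds the energies by $L^{2}$ norms, sums over $\widetilde{P'}$ at fixed scale $k'$ by H\"older and spatial disjointness, and finally sums over $k'$ using $\sum_{k'}C^{\varepsilon}_{k'}<\infty$. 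No vector-valued BHT is needed. The full $L^{p}$ range then follows from multilinear interpolation. This reduction to restricted weak type is precisely the device that closes the gap you identified as ``the principal obstacle.''
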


As shown in \cite{MPTT1}, the bilinear and bi-parameter Hilbert transform does not satisfy any $L^p$ estimates. This is the case when the singularity sets
$\Gamma_1$ and $\Gamma_2$ satisfy $dim\, \Gamma_1=dim \Gamma_2=1$. Thus, it is natural to ask  if the $L^p$ estimates will break down for any bilinear and bi-parameter Fourier multiplier operator with $dim\, \Gamma_1=dim \Gamma_2=1$. In other words, will a non-smooth symbol with the same dimensional singularity sets but with a slightly better decay than that for the bilinear and bi-parameter Hilbert transform assure the $L^p$ estimates? Our next theorem will address this issue.

For $d=2$ and arbitrary small $\varepsilon>0$, let $\widetilde{m}^{\varepsilon}=\widetilde{m}^{\varepsilon}(\xi)=\widetilde{m}^{\varepsilon}(\bar{\xi})$ be a bounded symbol in $L^{\infty}(\mathbb{R}^{4})$ that is smooth away from the subspaces $\Gamma_{1}\cup\Gamma_{2}$ and satisfying
\begin{equation}\label{1.10'}
    |\partial^{\alpha_{1}}_{\bar{\xi}_{1}}\partial^{\alpha_{2}}_{\bar{\xi}_{2}}\widetilde{m}^{\varepsilon}(\bar{\xi})|\lesssim
    \prod_{i=1}^{2}\frac{1}{dist(\bar{\xi}_{i},\Gamma_{i})^{|\alpha_{i}|}}\cdot\langle\log_{2}dist(\bar{\xi}_{1},\Gamma_{1})\rangle^{-(1+\varepsilon)}
\end{equation}
for sufficiently many multi-indices $\alpha_{1},\alpha_{2}$, where $\langle x\rangle:=\sqrt{1+x^{2}}$ and $\Gamma_{i}:=\{(\xi^{i}_{1},\xi^{i}_{2})\in\mathbb{R}^{2}: \xi^{i}_{1}=\xi^{i}_{2}\}$ for $i=1,2$. Denote by $T_{\widetilde{m}^{\varepsilon}}^{(2)}$ the bilinear multiplier operator defined by
\begin{equation}\label{1.11'}
   T_{\widetilde{m}^{\varepsilon}}^{(2)}(f_{1},f_{2})(x):=\int_{\mathbb{R}^{4}}\widetilde{m}^{\varepsilon}(\xi)\hat{f_{1}}(\xi_{1})\hat{f_{2}}(\xi_{2})
    e^{2\pi ix\cdot(\xi_{1}+\xi_{2})}d\xi.
\end{equation}

Our result for bilinear operators $T_{\widetilde{m}^{\varepsilon}}^{(2)}$ satisfying \eqref{1.10'} and \eqref{1.11'} is the following Theorem \ref{main2}.
\begin{thm}\label{main2}
For $d=2$ and any $\varepsilon>0$, the bilinear, bi-parameter multiplier operator $T^{(2)}_{\widetilde{m}^{\varepsilon}}$ maps $L^{p_{1}}(\mathbb{R}^{2})\times L^{p_{2}}(\mathbb{R}^{2})\rightarrow L^{p}(\mathbb{R}^{2})$ boundedly for any $1<p_{1}, \, p_{2}\leq\infty$ with $\frac{1}{p}=\frac{1}{p_{1}}+\frac{1}{p_{2}}$ and $\frac{2}{3}<p<\infty$. The implicit constants in the bounds depend only on $p_{1}$, $p_{2}$, $p$, $\varepsilon$ and tend to infinity as $\varepsilon\rightarrow0$. In addition, let the bilinear, bi-parameter operator $BHT^{\varepsilon}\otimes BHT$ be defined by
\begin{equation*}
   BHT^{\varepsilon}\otimes BHT(f_{1},f_{2})(x_{1},x_{2})=p. \, v. \, \int_{\mathbb{R}^{2}}f_{1}(x-s)f_{2}(x+s)\Psi^{\varepsilon}(s_{1})\frac{ds_{1}}{s_{1}}\frac{ds_{2}}{s_{2}}
\end{equation*}
with the function $\Psi^{\varepsilon}$ satisfying
\begin{equation}\label{1.12}
  |\partial^{\alpha_{1}}_{\bar{\xi}_{1}}\widehat{\Psi^{\varepsilon}}(\xi^{1}_{1}-\xi^{1}_{2})|\lesssim |\xi^{1}_{1}-\xi^{1}_{2}|^{-|\alpha_{1}|}\cdot\langle\log_{2}|\xi^{1}_{1}-\xi^{1}_{2}|\rangle^{-(1+\varepsilon)}
\end{equation}
for sufficiently many multi-indices $\alpha_{1}$, then it satisfies the same $L^{p}$ estimates as $T^{(2)}_{\widetilde{m}^{\varepsilon}}$.
\end{thm}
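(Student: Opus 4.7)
The plan is to reduce $T^{(2)}_{\widetilde{m}^{\varepsilon}}$ to a bilinear, bi-parameter discrete model by combining a Whitney decomposition adapted to the two diagonals $\Gamma_{1},\Gamma_{2}$ with the Lacey--Thiele bi-parameter wave packet expansion. First, for each parameter $i=1,2$, I would partition $\mathbb{R}^{2}\setminus\Gamma_{i}$ into Whitney cubes sitting at distance $\sim 2^{k_{i}}$ from $\Gamma_{i}$. This yields a symbol decomposition $\widetilde{m}^{\varepsilon} = \sum_{k_{1},k_{2}\in\mathbb{Z}} \widetilde{m}^{\varepsilon}_{k_{1},k_{2}}$, where hypothesis \eqref{1.10'} forces each piece to satisfy derivative bounds of the form $2^{-k_{1}|\alpha_{1}|}2^{-k_{2}|\alpha_{2}|}$ modulated by the factor $\langle k_{1}\rangle^{-(1+\varepsilon)}$. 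Expanding the $(k_{1},k_{2})$-piece in a double Fourier series on its Whitney cube produces a rapidly convergent series of tensor-product bumps, so that $T^{(2)}_{\widetilde{m}^{\varepsilon}_{k_{1},k_{2}}}$ becomes a shifted, rescaled bilinear bi-parameter model whose symbol behaves like a product of two BHT-type multipliers at respective scales $2^{k_{1}}$ and $2^{k_{2}}$.

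The next step is a scale-uniform $L^{p_{1}}\times L^{p_{2}}\to L^{p}$ estimate for the discrete model in the full BHT range $\frac{2}{3}<p<\infty$, $1<p_{1},p_{2}\le\infty$. I would follow the framework of \cite{MPTT2} and \cite{S}: rewrite the operator as a sum over pairs of tri-tiles $\vec{P}=(P^{1},P^{2})$, organize the tile set into collections of prescribed sizes and energies in each parameter, and then control trees via shifted bi-parameter maximal and square functions on polydiscs (bounded on $L^{p}$ for $p>1$) together with John--Nirenberg-type refinements. A restricted weak-type argument carried out relative to carefully chosen major subsets of an exceptional set produces the desired single-scale bound, and multilinear interpolation upgrades this to the full range.

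The final step is the summation in $(k_{1},k_{2})$. The summation in $k_{2}$ is straightforward: taking sufficiently many derivatives in \eqref{1.10'} gives geometric decay in $k_{2}$ once fattened Whitney cubes are compared against the exceptional set. The summation in $k_{1}$ is the delicate one: a naive tensoring of BHT estimates would only yield an $O(1)$ bound per scale in $k_{1}$, producing exactly the logarithmic divergence responsible for the failure of $BHT\otimes BHT$ via the counterexample \eqref{1.6}. My per-scale estimate will instead exhibit at worst subpolynomial growth in $k_{1}$, coming from the modulation shifts associated with the Whitney displacement, which the hypothesized decay $\langle k_{1}\rangle^{-(1+\varepsilon)}$ comfortably absorbs. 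The second assertion of the theorem then follows automatically, since the symbol of $BHT^{\varepsilon}\otimes BHT$ equals $\widehat{\Psi^{\varepsilon}}(\xi_{1}^{1}-\xi_{2}^{1})\,\mathrm{sgn}(\xi_{1}^{2}-\xi_{2}^{2})$ and hypothesis \eqref{1.12} translates directly into \eqref{1.10'}, making $BHT^{\varepsilon}\otimes BHT$ a special instance of $T^{(2)}_{\widetilde{m}^{\varepsilon}}$.

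The main obstacle I anticipate is establishing the per-scale bound with only subpolynomial growth in $k_{1}$. The bilinear Hilbert transform admits no effective BMO-on-polydiscs substitute, so the standard Lacey--Thiele wave packet analysis does not tensor cleanly with itself. The key device will be to treat the $k_{1}$-localized factor as a Whitney-modulated smooth paraproduct at scale $2^{k_{1}}$ rather than as a genuine BHT, thereby reducing the tensor structure to ``paraproduct $\otimes$ BHT'', for which the bi-parameter tools of \cite{MPTT1,MPTT2} directly apply. Once this reduction is in place, the only remaining cost is a bookkeeping factor polynomial in $k_{1}$ (tracking the Whitney shift and the Fourier-series coefficients), and the logarithmic decay built into \eqref{1.10'} is precisely what is needed to cancel it.
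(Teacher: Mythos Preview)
Your overall architecture---Whitney decomposition in both parameters, Fourier series on each Whitney cube, reduction to a discrete bi-parameter model, restricted weak type plus multilinear interpolation---matches the paper. But two of your key steps contain genuine gaps.

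First, there is no geometric decay in $k_{2}$. The symbol bound \eqref{1.10'} is scale-invariant in the second parameter: it gives exactly the Mikhlin-type decay $\mathrm{dist}(\bar{\xi}_{2},\Gamma_{2})^{-|\alpha_{2}|}$, with no extra smallness as $k_{2}\to\pm\infty$. The paper never sums over $k_{2}$ separately; instead, for each fixed tri-tile $P'$ in the first parameter it treats the \emph{entire} sum over $P''\in\mathbb{P}''$ as a single one-parameter BHT-type trilinear form and controls it in one shot via the size/energy machinery (Proposition~\ref{tile norm estimates}, Lemmas~\ref{size estimates}--\ref{energy estimates}). Your plan to peel off one $k_{2}$ at a time and sum geometrically will not work.

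Second, and more seriously, the reduction to ``paraproduct $\otimes$ BHT'' does not go through as you describe, and the MPTT tools you invoke are exactly the ones the paper explains it \emph{cannot} use. At a fixed scale $k_{1}$ there are infinitely many Whitney cubes tiling the strip around $\Gamma_{1}$, each carrying a different modulation; you do not get one paraproduct but an infinite family of modulated paraproducts. Equivalently, in the model operator there are infinitely many tri-tiles $P'$ with $I_{P'}=I_{0}$ for any fixed dyadic interval $I_{0}$, so the Carleson-type estimate $\sum_{P'}|I_{P'}|\lesssim|\widetilde{I}|$ fails and Journ\'e's lemma is unavailable; the hybrid maximal/square functions $MM,MS,SM,SS$ likewise do not organize this sum. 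The paper's substitute is an $L^{2}$ Bessel-type orthogonality argument: at fixed $k'$, distinct $P'\in\mathbb{P}'_{k'}$ have either disjoint spatial intervals or disjoint frequency intervals, so the wave packets $\Phi^{j}_{P'_{j}}$ are almost orthogonal in $L^{2}(\mathbb{R})$, yielding $\sum_{P'\in\mathbb{P}'_{k'}}|\langle F,\Phi^{j}_{P'_{j}}\rangle|^{2}\lesssim\|F\|_{L^{2}}^{2}$ uniformly in $k'$ (see \eqref{Bessel-1}--\eqref{Bessel-2}). This is what replaces the paraproduct/Journ\'e step, and together with the size/energy bound in the $P''$ variable it produces a per-scale estimate with \emph{no} growth in $k'$, so that the factor $\langle k'\rangle^{-(1+\varepsilon)}$ from \eqref{1.10'} gives a convergent sum. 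Your proposal anticipates subpolynomial growth in $k_{1}$ that must be beaten by the logarithmic decay; the actual mechanism is cleaner---uniform bounds from $L^{2}$ orthogonality---but it requires an idea you have not supplied.
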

\begin{rem}
For simplicity, we will only consider the bi-parameter case $d=2$ and $\Gamma_{i}=\{(0,0)\}$ ($i=1,\cdots,d-1$) in the proof of Theorem \ref{main1}. It will be clear from the proof (see Section 4) that we can extend the argument to the general $d$-parameter and $dim \, \Gamma_{i}=0$ ($i=1,\cdots,d-1$) cases straightforwardly. In the proof of Theorem \ref{main2}, we will only prove the $L^{p}$ estimates for bilinear and bi-parameter operators $T^{(2)}_{\widetilde{m}^{\varepsilon}}$, since one can observe from the discretization procedure in Section 2 that the bilinear and bi-parameter operator $BHT^{\varepsilon}\otimes BHT$ can be reduced to the same bilinear model operators $\widetilde{\Pi}^{\varepsilon}_{\vec{\mathbb{P}}}$ as $T^{(2)}_{\widetilde{m}^{\varepsilon}}$.
\end{rem}

It's well known that a standard approach to prove $L^{p}$ estimates for one-parameter $n$-linear operators with singular symbols (e.g., Coifman-Meyer multiplier, $BHT$ and one-parameter paraproducts) is the generic estimates of the corresponding $(n+1)$-linear forms consisting of estimates for different \emph{sizes} and \emph{energies} (see \cite{Jung,MS,MTT1,MTT2}), which relied on the one dimensional $BMO$ theory, or more precisely, the John-Nirenberg type inequalities to get good control over the relevant \emph{sizes}. Unfortunately, there is no routine generalization of such approach to multi-parameter settings, for instance, we don't have analogues of the John-Nirenberg inequalities for \emph{dyadic rectangular} $BMO$ spaces in two-parameter case (see \cite{MS}). To overcome these difficulties, in \cite{MPTT1} C. Muscalu, J. Pipher, T. Tao and C. Thiele  developed a completely new approach to prove $L^{p}$ estimates for bi-parameter paraproducts, their essential ideas is to apply the \emph{stopping-time decompositions} based on hybrid square and maximal operators $MM$, $MS$, $SM$ and $SS$, the one dimensional $BMO$ theory and Journ\'{e}'s lemma, and hence could not be extended to solve the general $d$-parameter ($d\geq3$) cases. As to the general $d$-parameter ($d\geq3$) cases, by proving a generic decomposition (see Lemma \ref{generic decomposition}) in \cite{MPTT2}, the authors simplified the arguments introduced by them in \cite{MPTT1} and this simplification works equally well in all $d$-parameter settings. Recently, a pseudo-differential variant of the theorems in \cite{MPTT1,MPTT2} has been established by the current authors in \cite{DL}. Moreover, in the work \cite{CJ} by J. Chen and the second author, they offer a different proof than those in \cite{MPTT1, MPTT2} to establish a H\"{o}rmander type theorem of $L^p$ estimates (and weighted estimates as well) for multi-linear and multi-parameter Fourier multiplier operators with limited smoothness in multi-parameter Sobolev spaces.

However, in this paper, in order to prove our main Theorems \ref{main1} and \ref{main2} in bi-parameter settings, we have at least two different difficulties from \cite{MPTT1}. First, observe that if one restricts the sum of \emph{tri-tiles} $P''\in\mathbb{P}''$ in the definitions of discrete model operators (see Section 2) to a \emph{tree} then one essentially gets a discrete paraproduct on $x_{2}$ variable, which can be estimated by the $MM$, $MS$, $SM$ and $SS$ functions, but due to the \emph{extra degree of freedom} in frequency in $x_{2}$ direction, there are infinitely many such paraproducts in the summation, so it's difficult for us to carry out the \emph{stopping-time decompositions} by using the hybrid square and maximal operators. Second, in the proof of Theorem \ref{main2}, note that there are infinitely many \emph{tri-tiles} $P'\in\mathbb{P}'$ with the property that $I_{P'}=I_{0}$ for a certain fixed dyadic interval $I_{0}$ of the same length as $I_{P'}$, so we can't get estimate $\sum_{P'}|I_{P'}|\lesssim|\widetilde{I}|$ for all dyadic intervals $I_{P'}\subseteq\widetilde{I}$ with comparable lengths, and hence we can't apply the Journ\'{e}'s lemma either. By making use of the $L^{2}$ \emph{sizes} and $L^{2}$ \emph{energies} estimates of the tri-linear forms, the \emph{almost orthogonality} of \emph{wave packets} associated with different \emph{tiles} of distinct \emph{trees} and the decay assumptions on the symbols, we are able to overcome these difficulties in the proof of Theorem \ref{main1} and \ref{main2} in bi-parameter settings.

Nevertheless, in the proof of Theorem \ref{main2} in general $d$-parameter settings ($d\geq3$), one easily observe that the generic decomposition will destroy the \emph{perfect orthogonality} of \emph{wave packets} associated with distinct \emph{tiles} which have disjoint frequency intervals in both $x_{1}$ and $x_{2}$ directions, thus we can't apply the generic decomposition to extend the results of Theorem 1.4 to higher parameters $d\geq3$. For the proof of Theorem \ref{main1}, we are able to apply the generic decomposition lemma (Lemma \ref{generic decomposition}) to the $d-1$ variables $x_{1},\cdots,x_{d-1}$. Although one can't obtain that $supp \, \Phi^{3,\ell}_{\widetilde{P'}}\otimes\Phi^{3}_{P''}$ is entirely contained in the exceptional set $U$ as in \cite{MPTT2}, but one can observe that the support set is contained in $U$ in all the $x_{1},\cdots,x_{d-1}$ variables except the last $x_{d}$. Therefore, we only need to consider the distance from the support set to the set $E'_{3}$ in $x_{d}$ direction and obtain enough decay factors for summation, the extension of the proof to the general $d$-parameter ($d\geq3$) cases is straightforward.

The rest of this paper is organized as follows. In Section 2 we reduce the proof of Theorem \ref{main1} and Theorem \ref{main2} to proving restricted weak type estimates of discrete bilinear model operators $\Pi^{\varepsilon}_{\vec{\mathbb{P}}}$ and $\widetilde{\Pi}^{\varepsilon}_{\vec{\mathbb{P}}}$ (Proposition \ref{RW-type equivalent}). Section 3 is devoted to giving a review of the definitions and useful properties about trees, $L^{2}$ sizes and $L^{2}$ energies introduced in \cite{MTT2}. In Section 4 and 5 we carry out the proof of Proposition \ref{RW-type equivalent}, which completes the proof of our main theorems, Theorem \ref{main1} and Theorem \ref{main2}, respectively.

\section{Reduction to restricted weak type estimates of discrete bilinear model operators $\Pi^{\varepsilon}_{\vec{\mathbb{P}}}$ and $\widetilde{\Pi}^{\varepsilon}_{\vec{\mathbb{P}}}$}

\subsection{Discretization}
As we can see from the study of multi-parameter and multi-linear Coifman-Meyer multiplier operators (see e.g. \cite{MTT1,MPTT1,MTT2,MPTT2}), a standard approach to obtain $L^{p}$ estimates of bilinear operators $T^{(d)}_{m^{\varepsilon}}$ and $T^{(2)}_{\widetilde{m}^{\varepsilon}}$ is to reduce them into discrete sums of inner products with wave packets (see \cite{Thiele1}).

\subsubsection{Discretization for bilinear, bi-parameter operators $T^{(2)}_{m^{\varepsilon}}$ with $\Gamma_{1}=\{(0,0)\}$}
We will proceed the discretization procedure as follows. First, we need to decompose the symbol $m^{\varepsilon}(\xi)$ in a natural way. To this end, for the first spatial variable $x_{1}$, we decompose the region $\{\bar{\xi}_{1}=(\xi^{1}_{1},\xi^{1}_{2})\in\mathbb{R}^{2}\setminus\{(0,0)\}\}$ by using \emph{Whitney squares} with respect to the singularity point $\{\xi^{1}_{1}=\xi^{1}_{2}=0\}$; while for the last spatial variable $x_{2}$, we decompose the region $\{\bar{\xi}_{2}=(\xi^{2}_{1},\xi^{2}_{2})\in\mathbb{R}^{2}: \xi^{2}_{1}\neq\xi^{2}_{2}\}$ by using \emph{Whitney squares} with respect to the singularity line $\Gamma_{2}=\{\xi^{2}_{1}=\xi^{2}_{2}\}$. In order to describe our discretization procedure clearly, let us first recall some standard notation and definitions in \cite{MTT2}.

An interval $I$ on the real line $\mathbb{R}$ is called dyadic if it is of the form $I=2^{-k}[n, \, n+1]$ for some $k, \, n\in\mathbb{Z}$. An interval is said to be a \emph{shifted dyadic interval} if it is of the form $2^{-k}[j+\alpha,j+1+\alpha]$ for any $k,j\in\mathbb{Z}$ and $\alpha\in\{0,\frac{1}{3},-\frac{1}{3}\}$. A \emph{shifted dyadic cube} is a set of the form $Q=Q_{1}\times Q_{2}\times Q_{3}$, where each $Q_{j}$ is a shifted dyadic interval and they all have the same length. A \emph{shifted dyadic quasi-cube} is a set $Q=Q_{1}\times Q_{2}\times Q_{3}$, where $Q_{j}$ ($j=1,2,3$) are shifted dyadic intervals satisfying less restrictive condition $|Q_{1}|\simeq|Q_{2}|\simeq|Q_{3}|$. One easily observe that for every cube $Q\subseteq\mathbb{R}^{3}$, there exists a shifted dyadic cube $\widetilde{Q}$ such that $Q\subset\frac{7}{10}\widetilde{Q}$ (the cube having the same center as $\widetilde{Q}$ but with side length $\frac{7}{10}$ that of $\widetilde{Q}$) and $diam(Q)\simeq diam(\widetilde{Q})$.

The same terminology will also be used in the plane $\mathbb{R}^{2}$. The only difference is that the previous cubes now become squares.

For any cube and square $Q$, we will denote the side length of $Q$ by $\ell(Q)$ for short and denote the reflection of $Q$ with respect to the origin by $-Q$ hereafter.

\begin{defn}\label{bump functions}(\cite{MS,MPTT2})
For $J\subseteq\mathbb{R}$ an arbitrary interval, we say that a smooth function $\Phi_{J}$ is a bump adapted to $J$, if and only if the following inequalities hold:
\begin{equation}\label{2.1}
  |\Phi_{J}^{(l)}(x)|\lesssim_{l,\alpha}\frac{1}{|J|^{l}}\cdot\frac{1}{(1+\frac{dist(x,J)}{|J|})^{\alpha}}
\end{equation}
for every integer $\alpha\in\mathbb{N}$ and for sufficiently many derivatives $l\in\mathbb{N}$. If $\Phi_{J}$ is a bump adapted to $J$, we say that $|J|^{-\frac{1}{2}}\Phi_{J}$ is an $L^{2}$-normalized bump adapted to $J$.
\end{defn}

Now let $\varphi\in\mathcal{S}(\mathbb{R})$ be an even Schwartz function such that $supp \, \hat{\varphi}\subseteq[-\frac{3}{16},\frac{3}{16}]$ and $\hat{\varphi}(\xi)=1$ on $[-\frac{1}{6},\frac{1}{6}]$, and define $\psi\in\mathcal{S}(\mathbb{R})$ to be the Schwartz function whose Fourier transform satisfies $\hat{\psi}(\xi):=\hat{\varphi}(\frac{\xi}{4})-\hat{\varphi}(\frac{\xi}{2})$ and $supp \, \hat{\psi}\subseteq[-\frac{3}{4},-\frac{1}{3}]\cup[\frac{1}{3},\frac{3}{4}]$, such that $0\leq\hat{\varphi}(\xi), \hat{\psi}(\xi)\leq1$. Then, for every integer $k\in\mathbb{Z}$, we define $\widehat{\varphi_{k}}, \, \widehat{\psi_{k}}\in\mathcal{S}(\mathbb{R})$ by
\begin{equation}\label{2.2}
  \widehat{\varphi_{k}}(\xi):=\hat{\varphi}(\frac{\xi}{2^{k}}), \,\,\,\,\,\, \widehat{\psi_{k}}(\xi):=\hat{\psi}(\frac{\xi}{2^{k}})=\widehat{\varphi_{k+2}}(\xi)-\widehat{\varphi_{k+1}}(\xi)
\end{equation}
and observe that
$$supp \, \widehat{\varphi_{k}}\subseteq[-\frac{3}{16}\cdot2^{k},\frac{3}{16}\cdot2^{k}], \,\,\,\,\,\, supp \, \widehat{\psi_{k}}\subseteq[-\frac{3}{4}\cdot2^{k},-\frac{1}{3}\cdot2^{k}]\cup[\frac{1}{3}\cdot2^{k},\frac{3}{4}\cdot2^{k}],$$
and $supp \, \widehat{\psi_{k}}\bigcap supp \, \widehat{\psi_{k'}}=\emptyset$ for any integers $k, \, k'\in\mathbb{Z}$ such that $|k-k'|\geq2$, $supp \, \hat{\varphi}\bigcap supp \, \widehat{\psi_{k}}=\emptyset$ for any integer $k\geq0$. One easily obtain the homogeneous Littlewood-Paley dyadic decomposition
\begin{equation}\label{2.3}
  1=\sum_{k\in\mathbb{Z}}\widehat{\psi_{k}}(\xi), \,\,\,\,\,\,\,\,\,\, \forall\xi\in\mathbb{R}\setminus\{0\}
\end{equation}
and inhomogeneous Littlewood-Paley dyadic decomposition
\begin{equation}\label{2.4}
  1=\hat{\varphi}(\xi)+\sum_{k\geq-1}\widehat{\psi_{k}}(\xi), \,\,\,\,\,\,\,\,\, \forall\xi\in\mathbb{R},
\end{equation}
as a consequence, we get decomposition for the product $1(\xi^{1}_{1},\xi^{1}_{2})=1(\xi^{1}_{1})\cdot1(\xi^{1}_{2})$ as follows:
\begin{equation}\label{2.5}
  1(\xi^{1}_{1},\xi^{1}_{2})=\sum_{k'\in\mathbb{Z}}\widehat{\varphi_{k'}}(\xi^{1}_{1})\widehat{\psi_{k'}}(\xi^{1}_{2})
  +\sum_{k'\in\mathbb{Z}}\widehat{\psi_{k'}}(\xi^{1}_{1})\widehat{\widetilde{\psi_{k'}}}(\xi^{1}_{2})
  +\sum_{k'\in\mathbb{Z}}\widehat{\psi_{k'}}(\xi^{1}_{1})\widehat{\varphi_{k'}}(\xi^{1}_{2})
\end{equation}
for every $(\xi^{1}_{1},\xi^{1}_{2})\neq(0,0)$, where
$$\widehat{\widetilde{\psi_{k'}}}:=\sum_{|k-k'|\leq1, \, k\in\mathbb{Z}}\widehat{\psi_{k}}, \,\,\,\,\,\,\,\,\,\, \forall k'\in\mathbb{Z}.$$

By writing the characteristic function of the plane $(\xi^{1}_{1},\xi^{1}_{_{2}})$ into finite sums of smoothed versions of characteristic functions of cones as in \eqref{2.5}, we can decompose the operator $T^{(2)}_{m^{\varepsilon}}$ into a finite sum of several parts in $x_{1}$ direction. Since all the operators obtained in this decomposition can be treated in the same way, we will discuss in detail only one of them. More precisely, let
\begin{equation}\label{2.6}
  \widetilde{\mathbb{Q}'}:=\{\widetilde{Q'}=\widetilde{Q'_{1}}\times\widetilde{Q'_{2}}\subseteq\mathbb{R}^{2}: \widetilde{Q'_{1}}:=2^{k'}[-\frac{1}{2},\frac{1}{2}], \, \widetilde{Q'_{2}}:=2^{k'}[\frac{1}{24},\frac{25}{24}], \, \forall k'\in\mathbb{Z}\},
\end{equation}
for each square $\widetilde{Q'}\in\widetilde{\mathbb{Q}'}$, we define bump functions $\phi_{\widetilde{Q'_{i}},i}$ ($i=1,2$) adapted to intervals $\widetilde{Q'_{i}}$ and satisfying $supp \, \phi_{\widetilde{Q'_{i}},i}\subseteq\frac{9}{10}\widetilde{Q'_{i}}$ by
\begin{equation}\label{2.7}
  \phi_{\widetilde{Q'_{1}},1}(\xi):=\hat{\varphi}(\frac{\xi}{\ell(\widetilde{Q'})})=\widehat{\varphi_{k'}}(\xi)
\end{equation}
and
\begin{equation}\label{2.8}
  \phi_{\widetilde{Q'_{2}},2}(\xi):=\hat{\psi}(\frac{\xi}{\ell(\widetilde{Q'})})\cdot\chi_{\{\xi>0\}}=\widehat{\psi_{k'}}(\xi)\cdot\chi_{\{\xi>0\}},
\end{equation}
respectively, and finally define smooth bump functions $\phi_{\widetilde{Q'}}$ adapted to $\widetilde{Q'}$ and satisfying $supp \, \phi_{\widetilde{Q'}}\subseteq\frac{9}{10}\widetilde{Q'}$ by
\begin{equation}\label{2.9}
  \phi_{\widetilde{Q'}}(\xi^{1}_{1},\xi^{1}_{2}):=\phi_{\widetilde{Q'_{1}},1}(\xi^{1}_{1})\cdot\phi_{\widetilde{Q'_{2}},2}(\xi^{1}_{2}).
\end{equation}
Without loss of generality, we will only consider the smoothed characteristic function of the cone $\{(\xi^{1}_{1},\xi^{1}_{2})\in\mathbb{R}^{2}: |\xi^{1}_{1}|\lesssim|\xi^{1}_{2}|, \, \xi^{1}_{2}>0\}$ in the decomposition \eqref{2.5} from now on, which is defined by
\begin{equation}\label{2.10}
  \sum_{\widetilde{Q'}\in\widetilde{\mathbb{Q}'}}\phi_{\widetilde{Q'}}(\xi^{1}_{1},\xi^{1}_{2}).
\end{equation}

As to the $x_{2}$ direction, we consider the collection $\mathbb{Q}''$ of all shifted dyadic squares $Q''=Q''_{1}\times Q''_{2}$ satisfying
\begin{equation}\label{2.11}
  Q''\subseteq\{(\xi^{2}_{1},\xi^{2}_{2})\in\mathbb{R}^{2}: \xi^{2}_{1}\neq\xi^{2}_{2}\}, \,\,\,\,\,\,\,\,\,\, dist(Q'',\Gamma_{2})\simeq10^{4}diam(Q'').
\end{equation}
We can split the collection $\mathbb{Q''}$ into two disjoint sub-collections, that is, define
\begin{equation}\label{2.12}
  \mathbb{Q}''_{\mathbb{I}}:=\{Q''\in\mathbb{Q}'': Q''\subseteq\{\xi^{2}_{1}<\xi^{2}_{2}\}\}, \,\,\,\,\,\,\,\,\,
  \mathbb{Q}''_{\mathbb{II}}:=\{Q''\in\mathbb{Q}'': Q''\subseteq\{\xi^{2}_{1}>\xi^{2}_{2}\}\}.
\end{equation}
Since the set of squares $\{\frac{7}{10}Q'': Q''\in\mathbb{Q}''\}$ also forms a finitely overlapping cover of the region $\{\xi^{2}_{1}\neq\xi^{2}_{2}\}$, we can apply a standard partition of unity and write the symbol $\chi_{\{\xi^{2}_{1}\neq\xi^{2}_{2}\}}$ as
\begin{equation}\label{2.13}
  \chi_{\{\xi^{2}_{1}\neq\xi^{2}_{2}\}}=\sum_{Q''\in\mathbb{Q}''}\phi_{Q''}(\xi^{2}_{1},\xi^{2}_{2})=
  \{\sum_{Q''\in\mathbb{Q}''_{\mathbb{I}}}+\sum_{Q''\in\mathbb{Q}''_{\mathbb{II}}}\}\phi_{Q''}(\xi^{2}_{1},\xi^{2}_{2})
  =\chi_{\{\xi^{2}_{1}<\xi^{2}_{2}\}}+\chi_{\{\xi^{2}_{1}>\xi^{2}_{2}\}},
\end{equation}
where each $\phi_{Q''}$ is a smooth bump function adapted to $Q''$ and supported in $\frac{8}{10}Q''$.

One can easily observe that we only need to discuss in detail one term in the decomposition \eqref{2.13}, since the other term can be treated in the same way. Without loss of generality, we will only consider the first term in the decomposition \eqref{2.13}, that is, the characteristic function $\chi_{\{\xi^{2}_{1}<\xi^{2}_{2}\}}$ of the upper half plane with respect to singularity line $\Gamma_{2}$, which can be written as
\begin{equation}\label{2.14}
  \chi_{\{\xi^{2}_{1}<\xi^{2}_{2}\}}=\sum_{Q''\in\mathbb{Q}''_{\mathbb{I}}}\phi_{Q''}(\xi^{2}_{1},\xi^{2}_{2}).
\end{equation}

In a word, we only need to consider the bilinear operator $T^{(2)}_{m^{\varepsilon},(lh,\mathbb{I})}$ given by
\begin{equation}\label{2.15}
  T^{(2)}_{m^{\varepsilon},(lh,\mathbb{I})}(f_{1},f_{2})(x):=\sum_{\widetilde{Q'}\in\widetilde{\mathbb{Q}'}, \, Q''\in\mathbb{Q}''_{\mathbb{I}}}
  \int_{\mathbb{R}^{4}}m^{\varepsilon}(\xi)\phi_{\widetilde{Q'}}(\bar{\xi}_{1})\phi_{Q''}(\bar{\xi}_{2})\widehat{f_{1}}(\xi_{1})\widehat{f_{2}}(\xi_{2})
  e^{2\pi ix\cdot(\xi_{1}+\xi_{2})}d\xi
\end{equation}
from now on, and the proof of Theorem \ref{main1} can be reduced to proving the following $L^{p}$ estimates for $T^{(2)}_{m^{\varepsilon},(lh,\mathbb{I})}$:
\begin{equation}\label{2.16}
  \|T^{(2)}_{m^{\varepsilon},(lh,\mathbb{I})}(f_{1},f_{2})\|_{L^{p}(\mathbb{R}^{2})}\lesssim_{\varepsilon,p,p_{1},p_{2}}
  \|f_{1}\|_{L^{p_{1}}(\mathbb{R}^{2})}\cdot\|f_{2}\|_{L^{p_{2}}(\mathbb{R}^{2})},
\end{equation}
as long as $1<p_{1}, \, p_{2}\leq\infty$ and $0<\frac{1}{p}=\frac{1}{p_{1}}+\frac{1}{p_{2}}<\frac{3}{2}$.

On one hand, since $\xi^{1}_{1}\in supp \, \phi_{\widetilde{Q'_{1}},1}\subseteq\ell(\widetilde{Q'})[-\frac{3}{16},\frac{3}{16}]$ and $\xi^{1}_{2}\in supp \, \phi_{\widetilde{Q'_{2}},2}\subseteq\ell(\widetilde{Q'})[\frac{1}{3},\frac{3}{4}]$, it follows that $-\xi^{1}_{1}-\xi^{1}_{2}\in\ell(\widetilde{Q'})[-\frac{15}{16},-\frac{7}{48}]$, and as a consequence, there exists a interval $\widetilde{Q'_{3}}:=\ell(\widetilde{Q'})[-\frac{25}{24},-\frac{1}{24}]$ and a bump function $\phi_{\widetilde{Q'_{3}},3}$ adapted to $\widetilde{Q'_{3}}$ such that $supp \, \phi_{\widetilde{Q'_{3}},3}\subseteq\ell(\widetilde{Q'})[-\frac{23}{24},-\frac{1}{8}]\subseteq\frac{9}{10}\widetilde{Q'_{3}}$ and $\phi_{\widetilde{Q'_{3}},3}\equiv1$ on $\ell(\widetilde{Q'})[-\frac{15}{16},-\frac{7}{48}]$.

On the other hand, observe that there exist bump functions $\phi_{Q''_{i},i}$ ($i=1,2$) adapted to the shifted dyadic interval $Q''_{i}$ such that $supp \, \phi_{Q''_{i},i}\subseteq\frac{9}{10}Q''_{i}$ and $\phi_{Q''_{i},i}\equiv1$ on $\frac{8}{10}Q''_{i}$ ($i=1,2$) respectively, and $supp \, \phi_{Q''}\subseteq\frac{8}{10}Q''$, thus one has $\phi_{Q''_{1},1}\cdot\phi_{Q''_{2},2}\equiv1$ on $supp \, \phi_{Q''}$. Since $\xi^{2}_{1}\in supp \, \phi_{Q''_{1},1}\subseteq\frac{9}{10}Q''_{1}$ and $\xi^{2}_{2}\in supp \, \phi_{Q''_{2},2}\subseteq\frac{9}{10}Q''_{2}$, it follows that $-\xi^{2}_{1}-\xi^{2}_{2}\in-\frac{9}{10}Q''_{1}-\frac{9}{10}Q''_{2}$, and as a consequence, one can find a shifted dyadic interval $Q''_{3}$ with the property that $-\frac{9}{10}Q''_{1}-\frac{9}{10}Q''_{2}\subseteq\frac{7}{10}Q''_{3}$ and also satisfying $|Q''_{1}|=|Q''_{2}|\simeq|Q''_{3}|$. In particular, there exists bump function $\phi_{Q''_{3},3}$ adapted to $Q''_{3}$ and supported in $\frac{9}{10}Q''_{3}$ such that $\phi_{Q''_{3},3}\equiv1$ on $-\frac{9}{10}Q''_{1}-\frac{9}{10}Q''_{2}$.

We denote by $\widetilde{\mathbf{Q}'}$ the collection of all cubes $\widetilde{Q'}:=\widetilde{Q'_{1}}\times\widetilde{Q'_{2}}\times\widetilde{Q'_{3}}$ with $\widetilde{Q'_{1}}\times\widetilde{Q'_{2}}\in\widetilde{\mathbb{Q}'}$ and $\widetilde{Q'_{3}}$ be defined as above, and denote by $\mathbf{Q}''$ the collection of all shifted dyadic quasi-cubes $Q'':=Q''_{1}\times Q''_{2}\times Q''_{3}$ with $Q''_{1}\times Q''_{2}\in\mathbb{Q}''_{\mathbb{I}}$ and $Q''_{3}$ be defined as above.

\begin{defn}\label{cube-sparse}(\cite{MTT2})
We say that a collection of shifted dyadic quasi-cubes (cubes) is \emph{sparse} if and only if for every $j=1,2,3$,\\
(i) whenever $Q$ and $\widetilde{Q}$ belong to this collection and $|Q_{j}|<|\widetilde{Q}_{j}|$ then $10^{8}|Q_{j}|\leq|\widetilde{Q}_{j}|$;\\
(ii) whenever $Q$ and $\widetilde{Q}$ belong to this collection and $|Q_{j}|=|\widetilde{Q}_{j}|$ then $10^{8}Q_{j}\cap 10^{8}\widetilde{Q}_{j}=\emptyset$.
\end{defn}

In fact, it is not difficult to see that the collection $\mathbf{Q}''$ can be split into a sum of finitely many \emph{sparse} collection of shifted dyadic quasi-cubes. Therefore, we can assume from now on that the collection $\mathbf{Q}''$ is \emph{sparse}.

Assuming this we then observe that, for any $Q''$ in such a sparse collection $\mathbf{Q}''$, there exists a unique shifted dyadic cube $\widetilde{Q''}$ in $\mathbb{R}^{3}$ such that $Q''\subseteq\frac{7}{10}\widetilde{Q''}$ and with property that $diam(Q'')\simeq diam(\widetilde{Q''})$. This allows us in particular to assume further that $\mathbf{Q}''$ is a sparse collection of shifted dyadic cubes (that is, $|Q''_{1}|=|Q''_{2}|=|Q''_{3}|=\ell(Q'')$).

Now consider the trilinear form $\Lambda^{(2)}_{m^{\varepsilon},(lh,\mathbb{I})}(f_{1},f_{2},f_{3})$ associated to $T^{(2)}_{m^{\varepsilon},(lh,\mathbb{I})}(f_{1},f_{2})$, which can be written as
\begin{eqnarray}\label{2.17}
% \nonumber to remove numbering (before each equation)
   &&\Lambda^{(2)}_{m^{\varepsilon},(lh,\mathbb{I})}(f_{1},f_{2},f_{3}):=\int_{\mathbb{R}^{2}}T^{(2)}_{m^{\varepsilon},(lh,\mathbb{I})}(f_{1},f_{2})(x)f_{3}(x)dx\\
 \nonumber &=&\sum_{\widetilde{Q'}\in\widetilde{\mathbf{Q}'},Q''\in\mathbf{Q}''}\int_{\xi_{1}+\xi_{2}+\xi_{3}=0}
 m^{\varepsilon}_{\widetilde{Q'},Q''}(\xi_{1},\xi_{2},\xi_{3})(f_{1}\ast(\check{\phi}_{\widetilde{Q'_{1}},1}\otimes\check{\phi}_{Q''_{1},1}))^{\wedge}(\xi_{1})\\
 \nonumber &&\quad\quad\quad\quad\quad\quad
 \times(f_{2}\ast(\check{\phi}_{\widetilde{Q'_{2}},2}\otimes\check{\phi}_{Q''_{2},2}))^{\wedge}(\xi_{2})
 (f_{3}\ast(\check{\phi}_{\widetilde{Q'_{3}},3}\otimes\check{\phi}_{Q''_{3},3}))^{\wedge}(\xi_{3})d\xi_{1}d\xi_{2}d\xi_{3},
\end{eqnarray}
where $\xi_{i}=(\xi^{1}_{i},\xi^{2}_{i})$ for $i=1,2,3$, while
\begin{equation}\label{2.18}
  m^{\varepsilon}_{\widetilde{Q'},Q''}(\xi_{1},\xi_{2},\xi_{3}):=m^{\varepsilon}(\xi_{1},\xi_{2})\cdot
  (\widetilde{\phi}_{\widetilde{Q'}}\otimes(\phi_{Q''_{1}\times Q''_{2}}\cdot\widetilde{\phi}_{Q''_{3},3}))(\xi_{1},\xi_{2},\xi_{3}),
\end{equation}
where $\widetilde{\phi}_{\widetilde{Q'}}$ is an appropriate smooth function of variable $(\xi^{1}_{1},\xi^{1}_{2},\xi^{1}_{3})$ which is supported on a slightly larger cube (with a constant magnification independent of $\ell(\widetilde{Q'})$) than $supp \, (\phi_{\widetilde{Q'_{1}},1}(\xi^{1}_{1})\phi_{\widetilde{Q'_{2}},2}(\xi^{1}_{2})\phi_{\widetilde{Q'_{3}},3}(\xi^{1}_{3}))$ and equals $1$ on
$supp \, (\phi_{\widetilde{Q'_{1}},1}(\xi^{1}_{1})\phi_{\widetilde{Q'_{2}},2}(\xi^{1}_{2})\phi_{\widetilde{Q'_{3}},3}(\xi^{1}_{3}))$, the function $\phi_{Q''_{1}\times Q''_{2}}(\xi^{2}_{1},\xi^{2}_{2})$ is one term of the partition of unity defined in \eqref{2.14}, $\widetilde{\phi}_{Q''_{3},3}$ is an appropriate smooth function of variable $\xi^{2}_{3}$ supported on a slightly larger interval (with a constant magnification independent of $\ell(Q'')$) than $supp \, \phi_{Q''_{3},3}$, which equals $1$ on $supp \, \phi_{Q''_{3},3}$. We can decompose $m^{\varepsilon}_{\widetilde{Q'},Q''}(\xi_{1},\xi_{2},\xi_{3})$ as a Fourier series:
\begin{equation}\label{2.19}
  m^{\varepsilon}_{\widetilde{Q'},Q''}(\xi_{1},\xi_{2},\xi_{3})=\sum_{\vec{n}_{1},\vec{n}_{2},\vec{n}_{3}\in\mathbb{Z}^{2}}
  C^{\varepsilon,\widetilde{Q'},Q''}_{\vec{n}_{1},\vec{n}_{2},\vec{n}_{3}}
  e^{2\pi i(n^{'}_{1},n^{'}_{2},n^{'}_{3})\cdot(\xi^{1}_{1},\xi^{1}_{2},\xi^{1}_{3})/\ell(\widetilde{Q'})}
  e^{2\pi i(n^{''}_{1},n^{''}_{2},n^{''}_{3})\cdot(\xi^{2}_{1},\xi^{2}_{2},\xi^{2}_{3})/\ell(Q'')},
\end{equation}
where the Fourier coefficients $C^{\varepsilon,\widetilde{Q'},Q''}_{\vec{n}_{1},\vec{n}_{2},\vec{n}_{3}}$ are given by
\begin{eqnarray}\label{2.20}
% \nonumber to remove numbering (before each equation)
 \nonumber C^{\varepsilon,\widetilde{Q'},Q''}_{\vec{n}_{1},\vec{n}_{2},\vec{n}_{3}}&=&\int_{\mathbb{R}^{6}}
  m^{\varepsilon}_{\widetilde{Q'},Q''}((\ell(\widetilde{Q'})\xi^{1}_{1},\ell(Q'')\xi^{2}_{1}),(\ell(\widetilde{Q'})\xi^{1}_{2},\ell(Q'')\xi^{2}_{2}),
  (\ell(\widetilde{Q'})\xi^{1}_{3},\ell(Q'')\xi^{2}_{3}))\\
  && \quad\quad\quad\quad\quad\quad\quad\quad\quad\quad\quad\quad\quad\quad\quad\quad\quad \times e^{-2\pi i(\vec{n}_{1}\cdot\xi_{1}+\vec{n}_{2}\cdot\xi_{2}+\vec{n}_{3}\cdot\xi_{3})}
  d\xi_{1}d\xi_{2}d\xi_{3}.
\end{eqnarray}
Then, by a straightforward calculation, we can rewrite \eqref{2.17} as
\begin{eqnarray}\label{2.21}
% \nonumber to remove numbering (before each equation)
\nonumber &&\Lambda^{(2)}_{m^{\varepsilon},(lh,\mathbb{I})}(f_{1},f_{2},f_{3})=\sum_{\widetilde{Q'}\in\widetilde{\mathbf{Q}'},Q''\in\mathbf{Q}''}
\sum_{\vec{n}_{1},\vec{n}_{2},\vec{n}_{3}\in\mathbb{Z}^{2}}C^{\varepsilon,\widetilde{Q'},Q''}_{\vec{n}_{1},\vec{n}_{2},\vec{n}_{3}}\int_{\mathbb{R}^{2}}\\
\nonumber &&(f_{1}\ast(\check{\phi}_{\widetilde{Q'_{1}},1}\otimes\check{\phi}_{Q''_{1},1}))(x-(\frac{n'_{1}}{\ell(\widetilde{Q'})},\frac{n^{''}_{1}}{\ell(Q'')}))
  (f_{2}\ast(\check{\phi}_{\widetilde{Q'_{2}},2}\otimes\check{\phi}_{Q''_{2},2}))(x-(\frac{n'_{2}}{\ell(\widetilde{Q'})},\frac{n^{''}_{2}}{\ell(Q'')}))\\
&&\quad\quad\quad\quad\quad\quad\quad\quad\quad\quad\quad\quad\quad\quad\quad\quad
\times(f_{3}\ast(\check{\phi}_{\widetilde{Q'_{3}},3}\otimes\check{\phi}_{Q''_{3},3}))(x-(\frac{n'_{3}}{\ell(\widetilde{Q'})},\frac{n^{''}_{3}}{\ell(Q'')}))dx.
\end{eqnarray}

\begin{defn}\label{wave packet}(\cite{MTT2,Thiele1})
An arbitrary dyadic rectangle of area $1$ in the phase-space plane is called a \emph{Heisenberg box} or \emph{tile}. Let $P:=I_{P}\times\omega_{P}$ be a tile. A $L^{2}$-normalized wave packet on $P$ is a function $\Phi_{p}$ which has Fourier support $supp \, \hat{\Phi}_{p}\subseteq\frac{9}{10}\omega_{P}$ and obeys the estimates
\begin{equation*}
  |\Phi_{P}(x)|\lesssim|I_{P}|^{-\frac{1}{2}}(1+\frac{dist(x,I_{P})}{|I_{P}|})^{-M}
\end{equation*}
for all $M>0$, where the implicit constant depends on $M$.
\end{defn}

Now we define $\phi^{n'_{i}}_{\widetilde{Q'_{i}},i}:=e^{2\pi in'_{i}\xi^{1}_{i}/\ell(\widetilde{Q'})}\cdot\phi_{\widetilde{Q'_{i}},i}$ and $\phi^{n''_{i}}_{Q''_{i},i}:=e^{2\pi in''_{i}\xi^{2}_{i}/\ell(Q'')}\cdot\phi_{Q''_{i},i}$ for $i=1,2,3$. Since any $\widetilde{Q'}\in\widetilde{\mathbf{Q}'}$ and $Q''\in\mathbf{Q}''$ are both shifted dyadic cubes, there exists integers $k',k''\in\mathbb{Z}$ such that $\ell(\widetilde{Q'})=|\widetilde{Q'_{1}}|=|\widetilde{Q'_{2}}|=|\widetilde{Q'_{3}}|=2^{k'}$ and $\ell(Q'')=|Q''_{1}|=|Q''_{2}|=|Q''_{3}|=2^{k''}$ respectively. By splitting the integral region $\mathbb{R}^{2}$ into the union of unit squares, the $L^{2}$-normalization procedure and simple calculations, we can rewrite \eqref{2.21} as
\begin{eqnarray}\label{2.22}
% \nonumber to remove numbering (before each equation)
   &&\Lambda^{(2)}_{m^{\varepsilon},(lh,\mathbb{I})}(f_{1},f_{2},f_{3})\\
  \nonumber &=&\sum_{\vec{n}_{1},\vec{n}_{2},\vec{n}_{3}\in\mathbb{Z}^{2}}\sum_{\widetilde{Q'}\in\widetilde{\mathbf{Q}'},Q''\in\mathbf{Q}''}
  \int_{0}^{1}\int_{0}^{1}\sum_{\substack{\widetilde{I'} \,\, dyadic, \\ |\widetilde{I'}|=2^{-k'}}}\sum_{\substack{I'' \,\, dyadic, \\ |I''|=2^{-k''}}}\frac{C^{\varepsilon,\widetilde{Q'},Q''}_{\vec{n}_{1},\vec{n}_{2},\vec{n}_{3}}}{|\widetilde{I'}|^{\frac{1}{2}}\times|I''|^{\frac{1}{2}}}
  \langle f_{1},\check{\phi}^{n'_{1},\nu'}_{\widetilde{I'},\widetilde{Q'_{1}},1}\otimes\check{\phi}^{n''_{1},\nu''}_{I'',Q''_{1},1}\rangle\\
  \nonumber &&\quad\quad\quad\quad\quad\quad\quad\quad\quad\quad\quad\quad\times
  \langle f_{2},\check{\phi}^{n'_{2},\nu'}_{\widetilde{I'},\widetilde{Q'_{2}},2}\otimes\check{\phi}^{n''_{2},\nu''}_{I'',Q''_{2},2}\rangle
  \langle f_{3},\check{\phi}^{n'_{3},\nu'}_{\widetilde{I'},\widetilde{Q'_{3}},3}\otimes\check{\phi}^{n''_{3},\nu''}_{I'',Q''_{3},3}\rangle d\nu'd\nu''\\
 \nonumber &=:&\sum_{\vec{n}_{1},\vec{n}_{2},\vec{n}_{3}\in\mathbb{Z}^{2}}\int_{0}^{1}\int_{0}^{1}\sum_{\vec{P}:=\widetilde{P'}\otimes P''\in\vec{\mathbb{P}}}
 \frac{C^{\varepsilon}_{Q_{\vec{P}},\vec{n}_{1},\vec{n}_{2},\vec{n}_{3}}}{|I_{\vec{P}}|^{\frac{1}{2}}}
 \langle f_{1},\Phi^{1,\vec{n}_{1},\nu}_{\vec{P}_{1}}\rangle\langle f_{2},\Phi^{2,\vec{n}_{2},\nu}_{\vec{P}_{2}}\rangle
 \langle f_{3},\Phi^{3,\vec{n}_{3},\nu}_{\vec{P}_{3}}\rangle d\nu,
\end{eqnarray}
where the notation $\langle\cdot,\cdot\rangle$ denotes the complex scalar $L^{2}$ inner product, the Fourier coefficients $C^{\varepsilon}_{Q_{\vec{P}},\vec{n}_{1},\vec{n}_{2},\vec{n}_{3}}:=C^{\varepsilon,\widetilde{Q'},Q''}_{\vec{n}_{1},\vec{n}_{2},\vec{n}_{3}}$, the \emph{tri-tiles} $\widetilde{P'}:=(\widetilde{P'_{1}},\widetilde{P'_{2}},\widetilde{P'_{3}})$ and $P'':=(P''_{1},P''_{2},P''_{3})$, the \emph{tiles} $\widetilde{P'_{i}}:=I_{\widetilde{P'_{i}}}\times\omega_{\widetilde{P'_{i}}}$ with $I_{\widetilde{P'_{i}}}:=\widetilde{I'}=2^{-k'}[l',l'+1]=:I_{\widetilde{P'}}$ and the frequency intervals $\omega_{\widetilde{P'_{i}}}:=\widetilde{Q'_{i}}$ for $i=1,2,3$, the \emph{tiles} $P''_{j}:=I_{P''_{j}}\times\omega_{P''_{j}}$ with $I_{P''_{j}}:=I''=2^{-k''}[l'',l''+1]=:I_{P''}$ and the frequency intervals $\omega_{P''_{j}}:=Q''_{j}$ for $j=1,2,3$, the frequency cubes $Q_{\widetilde{P'}}:=\omega_{\widetilde{P'_{1}}}\times\omega_{\widetilde{P'_{2}}}\times\omega_{\widetilde{P'_{3}}}$ and $Q_{P''}:=\omega_{P''_{1}}\times\omega_{P''_{2}}\times\omega_{P''_{3}}$, $\widetilde{\mathbb{P}'}$ denotes a collection of such tri-tiles $\widetilde{P'}$ and $\mathbb{P}''$ denotes a collection of such tri-tiles $P''$, the bi-tiles $\vec{P}_{1}$, $\vec{P}_{2}$ and $\vec{P}_{3}$ are defined by
\[\vec{P}_{1}:=(\widetilde{P'_{1}},P''_{1})=(2^{-k'}[l',l'+1]\times2^{k'}[-\frac{1}{2},\frac{1}{2}],2^{-k''}[l'',l''+1]\times Q''_{1}),\]
\[\vec{P}_{2}:=(\widetilde{P'_{2}},P''_{2})=(2^{-k'}[l',l'+1]\times2^{k'}[\frac{1}{24},\frac{25}{24}],2^{-k''}[l'',l''+1]\times Q''_{2}),\]
\[\vec{P}_{3}:=(\widetilde{P'_{3}},P''_{3})=(2^{-k'}[l',l'+1]\times2^{k'}[-\frac{25}{24},-\frac{1}{24}],2^{-k''}[l'',l''+1]\times Q''_{3});\]
the bi-parameter tri-tile $\vec{P}:=\widetilde{P'}\otimes P''=(\vec{P}_{1},\vec{P}_{2},\vec{P}_{3})$, the rectangles $I_{\vec{P}_{i}}:=I_{\widetilde{P'_{i}}}\times I_{P''_{i}}=I_{\widetilde{P'}}\times I_{P''}=:I_{\vec{P}}$ for $i=1,2,3$ and hence $|I_{\vec{P}}|=|I_{\widetilde{P'}}\times I_{P''}|=|I_{\vec{P}_{1}}|=|I_{\vec{P}_{2}}|=|I_{\vec{P}_{3}}|=2^{-k'}\cdot2^{-k''}$, the double frequency cube $Q_{\vec{P}}:=(Q_{\widetilde{P'}},Q_{P''})=(\omega_{\widetilde{P'_{1}}}\times\omega_{\widetilde{P'_{2}}}\times\omega_{\widetilde{P'_{3}}},\omega_{P''_{1}}\times\omega_{P''_{2}}\times\omega_{P''_{3}})$, $\vec{\mathbb{P}}:=\widetilde{\mathbb{P}'}\times\mathbb{P}''$ denotes a collection of such bi-parameter tri-tiles $\vec{P}$; while the $L^{2}$-normalized wave packets $\Phi^{i,n'_{i},\nu'}_{\widetilde{P'_{i}}}$ associated with the Heisenberg boxes $\widetilde{P'_{i}}$ are defined by $\Phi^{i,n'_{i},\nu'}_{\widetilde{P'_{i}}}(x_{1}):=\check{\phi}^{n'_{i},\nu'}_{\widetilde{I'},\widetilde{Q'_{i}},i}(x_{1}):=2^{-\frac{k'}{2}}\overline{\check{\phi}^{n'_{i}}_{\widetilde{Q'_{i}},i}(2^{-k'}(l'+\nu')-x_{1})}$ for $i=1,2,3$, the $L^{2}$-normalized wave packets $\Phi^{i,n''_{i},\nu''}_{P''_{i}}$ associated with the Heisenberg boxes $P''_{i}$ are defined by $\Phi^{i,n''_{i},\nu''}_{P''_{i}}(x_{2}):=\check{\phi}^{n''_{i},\nu''}_{I'',Q''_{i},i}(x_{2}):=2^{-\frac{k''}{2}}\overline{\check{\phi}^{n''_{i}}_{Q''_{i},i}(2^{-k''}(l''+\nu'')-x_{2})}$ for $i=1,2,3$, the smooth bump functions $\Phi^{i,\vec{n}_{i},\nu}_{\vec{P}_{i}}:=\Phi^{i,n'_{i},\nu'}_{\widetilde{P'_{i}}}\otimes\Phi^{i,n''_{i},\nu''}_{P''_{i}}$ for $i=1,2,3$.

We have the following rapid decay estimates of the Fourier coefficients $C^{\varepsilon}_{Q_{\vec{P}},\vec{n}_{1},\vec{n}_{2},\vec{n}_{3}}$ with respect to the parameters $\vec{n}_{1},\vec{n}_{2},\vec{n}_{3}\in\mathbb{Z}^{2}$.
\begin{lem}\label{Fourier coefficient1}
The Fourier coefficients $C^{\varepsilon}_{Q_{\vec{P}},\vec{n}_{1},\vec{n}_{2},\vec{n}_{3}}$ satisfy estimates
\begin{equation}\label{FC-1}
  |C^{\varepsilon}_{Q_{\vec{P}},\vec{n}_{1},\vec{n}_{2},\vec{n}_{3}}|\lesssim\prod_{j=1}^{3}\frac{1}{(1+|\vec{n}_{j}|)^{M}}\cdot C^{\varepsilon}_{|I_{\widetilde{P'}}|}
\end{equation}
for any bi-parameter tri-tile $\vec{P}\in\vec{\mathbb{P}}$, where $M$ is sufficiently large and the sequence $C^{\varepsilon}_{k'}:=C^{\varepsilon}_{|I_{\widetilde{P'}}|}$ for $|I_{\widetilde{P'}}|=2^{-k'}$ ($k'\in\mathbb{Z}$) satisfies
\begin{equation}\label{FC-1'}
  \sum_{k'\in\mathbb{Z}}C^{\varepsilon}_{k'}\leq C_{\varepsilon}<+\infty
\end{equation}
and $C_{\varepsilon}\rightarrow+\infty$ as $\varepsilon\rightarrow0$.
\end{lem}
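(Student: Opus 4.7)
\medskip

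The proof will be a standard Fourier-coefficient estimate based on integration by parts, where the essential novelty is that the summable sequence $(C^{\varepsilon}_{k'})_{k'\in\mathbb{Z}}$ is to be extracted directly from the integrated hypothesis \eqref{1.10} rather than from any pointwise derivative bound on the symbol. The plan is to integrate by parts in all six Fourier variables, expand the resulting derivative of $m^{\varepsilon}_{\widetilde{Q'},Q''}=m^{\varepsilon}\cdot\chi$ by Leibniz (where $\chi:=\widetilde{\phi}_{\widetilde{Q'}}\otimes(\phi_{Q''_1\times Q''_2}\cdot\widetilde{\phi}_{Q''_3,3})$ is the product cutoff), and finally sum over dyadic annuli in $\bar\xi_1$ to invoke \eqref{1.10}.

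First, starting from \eqref{2.20}, I perform $N$ integrations by parts in each of the variables $\xi^{1}_{j},\xi^{2}_{j}$ for $j=1,2,3$, taking $N$ large enough to yield the target decay $\prod_{j=1}^{3}(1+|\vec n_j|)^{-M}$. Each integration by parts in $\xi^{1}_{j}$ produces the factor $1/(2\pi i n'_j)$ and, via the chain rule applied to $m^{\varepsilon}_{\widetilde{Q'},Q''}(\ell(\widetilde{Q'})\xi^{1},\ell(Q'')\xi^{2})$, a scale factor $\ell(\widetilde{Q'})$ and a derivative $\partial_{\xi^{1}_{j}}$; analogously each integration by parts in $\xi^{2}_{j}$ contributes $\ell(Q'')$ and $\partial_{\xi^{2}_{j}}$. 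Next I expand by Leibniz: since $m^{\varepsilon}$ depends only on $\bar\xi_1,\bar\xi_2$ and not on $(\xi^{1}_{3},\xi^{2}_{3})$, every derivative in $\xi^{1}_{3},\xi^{2}_{3}$ must land on $\chi$, contributing scale factors $\ell(\widetilde{Q'})^{-\cdot},\ell(Q'')^{-\cdot}$; the remaining derivatives of $\chi$ are controlled in the standard way. Integrating $\chi$ in $(\xi^{1}_{3},\xi^{2}_{3})$ supplies a volume $\ell(\widetilde{Q'})\ell(Q'')$, and including the Jacobian of the change of variables back to $\bar\xi$, all scale powers balance: the Leibniz term with $|\alpha_1|$ derivatives placed on $m^{\varepsilon}$ in $\bar\xi_1$ and $|\alpha_2|$ in $\bar\xi_2$ contributes
\begin{equation*}
\prod_{j=1}^{3}(1+|\vec n_j|)^{-M}\cdot \ell(\widetilde{Q'})^{|\alpha_1|-2}\,\ell(Q'')^{|\alpha_2|-2}\int_{\mathrm{supp}(\chi)} |\partial^{\alpha_1}_{\bar\xi_1}\partial^{\alpha_2}_{\bar\xi_2}m^{\varepsilon}|\,d\bar\xi_1 d\bar\xi_2.
\end{equation*}

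Finally, I define $C^{\varepsilon}_{|I_{\widetilde{P'}}|}$ to be precisely this prefactor-weighted integral restricted to the annulus $dist(\bar\xi_1,\Gamma_1)\sim\ell(\widetilde{Q'})=2^{k'}$. Summing over $k'\in\mathbb{Z}$ reassembles the full $\bar\xi_1$-integral, and \eqref{1.10} with the matching multi-indices $\alpha_1,\alpha_2$ together with the fact that $dist(\bar\xi_2,\Gamma_2)\sim\ell(Q'')$ on the $\bar\xi_2$-support of $\chi$ gives
\begin{equation*}
\sum_{k'\in\mathbb{Z}} C^{\varepsilon}_{k'}\leq \ell(Q'')^{|\alpha_2|-2}\int_{\mathrm{supp}_{\bar\xi_2}(\chi)} \frac{B(\varepsilon)}{dist(\bar\xi_2,\Gamma_2)^{|\alpha_2|}}\,d\bar\xi_2\lesssim B(\varepsilon)=:C_{\varepsilon},
\end{equation*}
uniformly in $Q''$ and in the chosen Leibniz split, and $C_{\varepsilon}\to+\infty$ as $\varepsilon\to0$ is inherited from $B(\varepsilon)\to+\infty$. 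The main obstacle is purely bookkeeping: one must verify that the exponent $2$ appearing in the denominator $dist(\bar\xi_1,\Gamma_1)^{2-|\alpha_1|}$ of \eqref{1.10}, which equals the dimension of $\bar\xi_1$, matches exactly the $-2$ in the scale exponent $|\alpha_1|-2$ produced by the integration by parts, cutoff-derivative, and Jacobian bookkeeping, so that the $\ell(\widetilde{Q'})$-powers cancel and summation in $k'$ reconstitutes the integrated hypothesis; an identical matching in $\bar\xi_2$ ensures the bound is uniform in $Q''$.
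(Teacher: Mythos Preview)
Your proposal is correct and follows essentially the same route as the paper's proof: integration by parts in all six variables to obtain the $\prod_j(1+|\vec n_j|)^{-M}$ decay, Leibniz expansion of $\partial^{\alpha}(m^{\varepsilon}\cdot\chi)$, and bookkeeping of scale powers so that the resulting integral over the Whitney support yields a quantity of the form $\ell(\widetilde{Q'})^{|\alpha_1|-2}\ell(Q'')^{|\alpha_2|-2}\int|\partial^{\alpha_1}_{\bar\xi_1}\partial^{\alpha_2}_{\bar\xi_2}m^{\varepsilon}|$, with $C^{\varepsilon}_{k'}$ defined as this integral restricted to the annulus $dist(\bar\xi_1,\Gamma_1)\simeq 2^{k'}$ and the summation over $k'$ recovering the integrated hypothesis \eqref{1.10}. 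The paper organizes the calculation slightly differently (working first on the rescaled unit-scale cubes $Q^{0}_{\widetilde{P'}}\times Q^{0}_{P''}$ and then changing variables back), but the substance is identical, including the crucial observation you highlight that the exponent $2=\dim\bar\xi_1$ in \eqref{1.10} exactly matches the Jacobian--volume deficit.
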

\begin{proof}
Let $\ell(Q_{\widetilde{P'}})=2^{k'}$ and $\ell(Q_{P''})=2^{k''}$ for $k',k''\in\mathbb{Z}$. For any $\varepsilon>0$, $\vec{n}_{1},\vec{n}_{2},\vec{n}_{3}\in\mathbb{Z}^{2}$ and $\vec{P}\in\vec{\mathbb{P}}$, we deduce from \eqref{2.18} and \eqref{2.20} that
\begin{eqnarray}\label{2.23}
% \nonumber to remove numbering (before each equation)
 \nonumber C^{\varepsilon}_{Q_{\vec{P}},\vec{n}_{1},\vec{n}_{2},\vec{n}_{3}}&=&\int_{\mathbb{R}^{6}}
  m^{\varepsilon}_{Q_{\widetilde{P'}},Q_{P''}}((2^{k'}\xi^{1}_{1},2^{k''}\xi^{2}_{1}),(2^{k'}\xi^{1}_{2},2^{k''}\xi^{2}_{2}),
  (2^{k'}\xi^{1}_{3},2^{k''}\xi^{2}_{3}))\\
  && \quad\quad\quad\quad\quad\quad\quad\quad\quad\quad\quad\quad\quad\quad \times e^{-2\pi i(\vec{n}_{1}\cdot\xi_{1}+\vec{n}_{2}\cdot\xi_{2}+\vec{n}_{3}\cdot\xi_{3})}
  d\xi_{1}d\xi_{2}d\xi_{3},
\end{eqnarray}
where
\begin{eqnarray}\label{2.24}
% \nonumber to remove numbering (before each equation)
  && m^{\varepsilon}_{Q_{\widetilde{P'}},Q_{P''}}((2^{k'}\xi^{1}_{1},2^{k''}\xi^{2}_{1}),(2^{k'}\xi^{1}_{2},2^{k''}\xi^{2}_{2}),
  (2^{k'}\xi^{1}_{3},2^{k''}\xi^{2}_{3})):=m^{\varepsilon}(2^{k'}\bar{\xi}_{1},2^{k''}\bar{\xi}_{2})\\
 \nonumber && \quad\quad\quad\quad\quad\quad\quad\quad\quad\quad\quad \times\widetilde{\phi}_{Q_{\widetilde{P'}}}(2^{k'}\xi^{1}_{1},2^{k'}\xi^{1}_{2},2^{k'}\xi^{1}_{3})\phi_{\omega_{P''_{1}}\times\omega_{P''_{2}}}(2^{k''}\bar{\xi}_{2})
 \widetilde{\phi}_{\omega_{P''_{3}},3}(2^{k''}\xi^{2}_{3}).
\end{eqnarray}
Observe that $supp \, (\widetilde{\phi}_{Q_{\widetilde{P'}}}(\xi^{1}_{1},\xi^{1}_{2},\xi^{1}_{3})\phi_{\omega_{P''_{1}}\times\omega_{P''_{2}}}(\bar{\xi}_{2})\widetilde{\phi}_{\omega_{P''_{3}},3}(\xi^{2}_{3}))\subseteq Q_{\widetilde{P'}}\times Q_{P''}$, we have that $supp \, (\widetilde{\phi}_{Q_{\widetilde{P'}}}(2^{k'}\xi^{1}_{1},2^{k'}\xi^{1}_{2},2^{k'}\xi^{1}_{3})\phi_{\omega_{P''_{1}}\times\omega_{P''_{2}}}(2^{k''}\bar{\xi}_{2})
\widetilde{\phi}_{\omega_{P''_{3}},3}(2^{k''}\xi^{2}_{3}))\subseteq Q^{0}_{\widetilde{P'}}\times Q^{0}_{P''}$, where cubes $Q^{0}_{\widetilde{P'}}$ and $Q^{0}_{P''}$ are defined by
\begin{equation}\label{2.25}
  Q^{0}_{\widetilde{P'}}=\omega^{0}_{\widetilde{P'_{1}}}\times\omega^{0}_{\widetilde{P'_{2}}}\times\omega^{0}_{\widetilde{P'_{3}}}
  :=\{(\xi^{1}_{1},\xi^{1}_{2},\xi^{1}_{3})\in\mathbb{R}^{3}: (2^{k'}\xi^{1}_{1},2^{k'}\xi^{1}_{2},2^{k'}\xi^{1}_{3})\in Q_{\widetilde{P'}}\},
\end{equation}
\begin{equation}\label{2.26}
  Q^{0}_{P''}=\omega^{0}_{P''_{1}}\times\omega^{0}_{P''_{2}}\times\omega^{0}_{P''_{3}}
  :=\{(\xi^{2}_{1},\xi^{2}_{2},\xi^{2}_{3})\in\mathbb{R}^{3}: (2^{k''}\xi^{2}_{1},2^{k''}\xi^{2}_{2},2^{k''}\xi^{2}_{3})\in Q_{P''}\}
\end{equation}
and satisfy $|Q^{0}_{\widetilde{P'}}|\simeq|Q^{0}_{P''}|\simeq1$. From the properties of the \emph{Whitney squares} we constructed above, one obtains that $dist(2^{k'}\bar{\xi}_{1},\Gamma_{1})\simeq2^{k'}$ for any $\bar{\xi}_{1}\in\omega^{0}_{\widetilde{P'_{1}}}\times\omega^{0}_{\widetilde{P'_{2}}}$ and $dist(2^{k''}\bar{\xi}_{2},\Gamma_{2})\simeq2^{k''}$ for any $\bar{\xi}_{2}\in\omega^{0}_{P''_{1}}\times\omega^{0}_{P''_{2}}$.

One can deduce from \eqref{2.23}, \eqref{2.24} and integrating by parts sufficiently many times that
\begin{eqnarray*}
% \nonumber to remove numbering (before each equation)
&& |C^{\varepsilon}_{Q_{\vec{P}},\vec{n}_{1},\vec{n}_{2},\vec{n}_{3}}|\lesssim\prod_{j=1}^{3}\frac{1}{(1+|\vec{n}_{j}|)^{M}}\\
&& \times\int_{Q^{0}_{\widetilde{P'}}\times Q^{0}_{P''}}|\partial^{\alpha_{1}}_{\xi_{1}}\partial^{\alpha_{2}}_{\xi_{2}}\partial^{\alpha_{3}}_{\xi_{3}}
[m^{\varepsilon}_{Q_{\widetilde{P'}},Q_{P''}}((2^{k'}\xi^{1}_{1},2^{k''}\xi^{2}_{1}),(2^{k'}\xi^{1}_{2},2^{k''}\xi^{2}_{2}),
(2^{k'}\xi^{1}_{3},2^{k''}\xi^{2}_{3}))]|d\xi_{1}d\xi_{2}d\xi_{3}\\
&\lesssim& \prod_{j=1}^{3}\frac{1}{(1+|\vec{n}_{j}|)^{M}}\int_{\omega^{0}_{P''_{1}}\times\omega^{0}_{P''_{2}}}dist(2^{k''}\bar{\xi}_{2},\Gamma_{2})^{|\alpha''|}\\
&& \quad\quad\quad\quad\quad\quad\quad\quad\quad\quad\quad\quad\times\int_{\omega^{0}_{\widetilde{P'_{1}}}\times\omega^{0}_{\widetilde{P'_{2}}}}dist(2^{k'}\bar{\xi}_{1},\Gamma_{1})^{|\alpha'|}
|\partial^{\alpha'}_{\bar{\xi}_{1}}\partial^{\alpha''}_{\bar{\xi}_{2}}m^{\varepsilon}(2^{k'}\bar{\xi}_{1},2^{k''}\bar{\xi}_{2})|d\bar{\xi}_{1}d\bar{\xi}_{2}\\
&\lesssim& \prod_{j=1}^{3}\frac{1}{(1+|\vec{n}_{j}|)^{M}}\cdot\frac{1}{\ell(Q_{P''})^{2}}\int_{\omega_{P''_{1}}\times\omega_{P''_{2}}}
dist(\bar{\xi}_{2},\Gamma_{2})^{|\alpha''|}\\
&& \quad\quad\quad\times\int_{\omega_{\widetilde{P'_{1}}}\times\omega_{\widetilde{P'_{2}}}}dist(\bar{\xi}_{1},\Gamma_{1})^{|\alpha'|-2}
|\partial^{\alpha'}_{\bar{\xi}_{1}}\partial^{\alpha''}_{\bar{\xi}_{2}}m^{\varepsilon}(\bar{\xi}_{1},\bar{\xi}_{2})|d\bar{\xi}_{1}d\bar{\xi}_{2}
=:\prod_{j=1}^{3}\frac{1}{(1+|\vec{n}_{j}|)^{M}}\cdot C^{\varepsilon}_{|I_{\widetilde{P'}}|},
\end{eqnarray*}
where the multi-indices $\alpha_{i}:=(\alpha^{1}_{i},\alpha^{2}_{i})$ for $i=1,2,3$ and $|\alpha_{1}|=|\alpha_{2}|=|\alpha_{3}|=M$ are sufficiently large, the multi-indices $\alpha':=(\alpha'_{1},\alpha'_{2},\alpha'_{3})$, $\alpha'':=(\alpha''_{1},\alpha''_{2},\alpha''_{3})$ with $\alpha'_{i}\leq\alpha^{1}_{i}$ and $\alpha''_{j}\leq\alpha^{2}_{j}$ for $i,j=1,2,3$. This proves the estimates \eqref{FC-1}.

Moreover, for $|I_{\widetilde{P'}}|=2^{-k'}$, we define the sequence $C^{\varepsilon}_{k'}:=C^{\varepsilon}_{|I_{\widetilde{P'}}|}$ ($k'\in\mathbb{Z}$). From the estimates \eqref{1.10} for symbol $m^{\varepsilon}(\bar{\xi}_{1},\bar{\xi}_{2})$, we get that
\begin{equation}\label{2.27}
  dist(\bar{\xi_{2}},\Gamma_{2})^{|\alpha''|}\cdot\int_{\mathbb{R}^{2}}dist(\bar{\xi_{1}},\Gamma_{1})^{|\alpha'|-2}
  |\partial^{\alpha'}_{\bar{\xi}_{1}}\partial^{\alpha''}_{\bar{\xi}_{2}}m^{\varepsilon}(\bar{\xi})|d\bar{\xi_{1}}\leq B(\varepsilon)<+\infty,
\end{equation}
and hence we can deduce the following summable property for the sequence $\{C^{\varepsilon}_{k'}\}_{k'\in\mathbb{Z}}$:
\begin{eqnarray}\label{2.28}
% \nonumber to remove numbering (before each equation)
  \nonumber  \sum_{k'\in\mathbb{Z}}C^{\varepsilon}_{k'}&\lesssim&\frac{1}{\ell(Q_{P''})^{2}}\int_{\omega_{P''_{1}}\times\omega_{P''_{2}}}
   dist(\bar{\xi}_{2},\Gamma_{2})^{|\alpha''|}\\
&&\quad\quad\quad\quad\times
\int_{\cup_{\widetilde{P'}\in\widetilde{\mathbb{P}'}}(\omega_{\widetilde{P'_{1}}}\times\omega_{\widetilde{P'_{2}}})_{\widetilde{P'}}}
dist(\bar{\xi}_{1},\Gamma_{1})^{|\alpha'|-2}|\partial^{\alpha'}_{\bar{\xi}_{1}}\partial^{\alpha''}_{\bar{\xi}_{2}}m^{\varepsilon}(\bar{\xi}_{1},\bar{\xi}_{2})|d\bar{\xi}_{1}d\bar{\xi}_{2}\\
 \nonumber &\lesssim& \frac{1}{\ell(Q_{P''})^{2}}\int_{\omega_{P''_{1}}\times\omega_{P''_{2}}}B(\varepsilon)d\bar{\xi}_{2}\leq C_{\varepsilon}<+\infty
\end{eqnarray}
and $C_{\varepsilon}\sim B(\varepsilon)\rightarrow+\infty$ as $\varepsilon\rightarrow0$, this ends the proof of the summable estimate \eqref{FC-1'}.
\end{proof}

Observe that the rapid decay with respect to the parameters $\vec{n}_{1},\vec{n}_{2},\vec{n}_{3}\in\mathbb{Z}^{2}$ in \eqref{FC-1} is acceptable for summation, all the functions $\Phi^{i,n'_{i},\nu'}_{\widetilde{P'_{i}}}$ ($i=1,2,3$) are $L^{2}$ normalized and are wave packets associated with the Heisenberg boxes $\widetilde{P'_{i}}$ uniformly with respect to the parameters $n'_{i}$ and all the functions $\Phi^{j,n''_{j},\nu''}_{P''_{j}}$ ($j=1,2,3$) are $L^{2}$ normalized and are wave packets associated with the Heisenberg boxes $P''_{j}$ uniformly with respect to the parameters $n''_{j}$, therefore we only need to consider from now on the part of the trilinear form $\Lambda^{(2)}_{m^{\varepsilon},(lh,\mathbb{I})}(f_{1},f_{2},f_{3})$ defined in \eqref{2.22} corresponding to $\vec{n}_{1}=\vec{n}_{2}=\vec{n}_{3}=\vec{0}$:
\begin{equation}\label{2.29}
  \dot{\Lambda}^{(2)}_{m^{\varepsilon},(lh,\mathbb{I})}(f_{1},f_{2},f_{3}):=\int_{0}^{1}\int_{0}^{1}\sum_{\vec{P}\in\vec{\mathbb{P}}}
 \frac{C^{\varepsilon}_{Q_{\vec{P}}}}{|I_{\vec{P}}|^{\frac{1}{2}}}
 \langle f_{1},\Phi^{1,\nu}_{\vec{P}_{1}}\rangle\langle f_{2},\Phi^{2,\nu}_{\vec{P}_{2}}\rangle
 \langle f_{3},\Phi^{3,\nu}_{\vec{P}_{3}}\rangle d\nu,
\end{equation}
where $C^{\varepsilon}_{Q_{\vec{P}}}:=C^{\varepsilon}_{Q_{\vec{P}},\vec{0},\vec{0},\vec{0}}$, parameters $\nu=(\nu',\nu'')$ and $\Phi^{i,\nu}_{\vec{P}_{i}}:=\Phi^{i,\vec{0},\nu}_{\vec{P}_{i}}$ for $i=1,2,3$.

\begin{rem}\label{properties}
We should point out two important properties of the tri-tiles in $\mathbb{P}''$ (see \cite{MS,MTT2}). First, if one knows the position of $P''_{1}$, $P''_{2}$ or $P''_{3}$, then one knows precisely the positions of the other two as well. Second, if one assumes for instance that all the frequency intervals $\omega_{P''_{1}}$ of the $P''_{1}$ tiles intersect each other (say, they are non-lacunary about a fixed frequency $\xi_{0}$), then the frequency intervals $\omega_{P''_{2}}$ of the corresponding $P''_{2}$ tiles are disjoint and lacunary around $\xi_{0}$ (that is, $dist(\xi_{0},\omega_{P''_{2}})\simeq|\omega_{P''_{2}}|$ for all $P''\in\mathbb{P}''$). A similar conclusion can also be drawn for the $P''_{3}$ tiles modulo certain translations. This observation motivates the introduction of \emph{trees} in Definition \ref{trees}.
\end{rem}

We review the following definitions from \cite{MTT2}.
\begin{defn}\label{tile-sparse}
A collection $\mathbb{P}$ of tri-tiles is called sparse, if all tri-tiles in $\mathbb{P}$ have the same shift and the sets $\{Q_{P}: P\in\mathbb{P}\}$ and $\{I_{P}: P\in\mathbb{P}\}$ are sparse.
\end{defn}
\begin{defn}\label{tile relations}
Let $P$ and $P'$ be tiles. Then\\
(i) we write $P'<P$ if $I_{P'}\subsetneq I_{P}$ and $\omega_{P}\subseteq3\omega_{P'}$;\\
(ii) we write $P'\leq P$ if $P'<P$ or $P'=P$;\\
(iii) we write $P'\lesssim P$ if $I_{P'}\subseteq I_{P}$ and $\omega_{P}\subseteq10^{6}\omega_{P'}$;\\
(iv) we write $P'\lesssim'P$ if $P'\lesssim P$ but $P'\nleq P$.
\end{defn}
\begin{defn}\label{rank 1}
A collection $\mathbb{P}$ of tri-tiles is said to have \emph{rank} $1$ if the following properties are satisfied for all $P,P'\in\mathbb{P}$.\\
(i) If $P\neq P'$, then $P_{j}\neq P'_{j}$ for $1\leq j\leq3$.\\
(ii) If $\omega_{P_{j}}=\omega_{P'_{j}}$ for some $j$, then $\omega_{P_{j}}=\omega_{P'_{j}}$ for all $1\leq j\leq3$.\\
(iii) If $P'_{j}\leq P_{j}$ for some $j$, then $P'_{j}\lesssim P_{j}$ for all $1\leq j\leq3$.\\
(iv) If in addition to $P'_{j}\leq P_{j}$ one also assumes that $10^{8}|I_{P'}|\leq|I_{P}|$, then one has $P'_{i}\lesssim'P_{i}$ for every $i\neq j$.
\end{defn}

It is not difficult to observe that the collection of tri-tiles $\mathbb{P}''$ can be written as a finite union of sparse collections of rank $1$, thus we may assume further that $\mathbb{P}''$ is a sparse collection of rank $1$ from now on.

The bilinear operator corresponding to the trilinear form $\dot{\Lambda}^{(2)}_{m^{\varepsilon},(lh,\mathbb{I})}(f_{1},f_{2},f_{3})$ can be written as
\begin{equation}\label{2.30}
  \dot{\Pi}^{\varepsilon}_{\vec{\mathbb{P}}}(f_{1},f_{2})(x)=\int_{0}^{1}\int_{0}^{1}\sum_{\vec{P}\in\vec{\mathbb{P}}}
 \frac{C^{\varepsilon}_{Q_{\vec{P}}}}{|I_{\vec{P}}|^{\frac{1}{2}}}
 \langle f_{1},\Phi^{1,\nu}_{\vec{P}_{1}}\rangle\langle f_{2},\Phi^{2,\nu}_{\vec{P}_{2}}\rangle\Phi^{3,\nu}_{\vec{P}_{3}}(x)d\nu.
\end{equation}
Since $\dot{\Pi}^{\varepsilon}_{\vec{\mathbb{P}}}(f_{1},f_{2})$ is an average of some discrete bilinear model operators depending on the parameters $\nu=(\nu_{1},\nu_{2})\in[0,1]^{2}$, it is enough to prove the H\"{o}lder-type $L^{p}$ estimates for each of them, uniformly with respect to parameters $\nu=(\nu_{1},\nu_{2})$. From now on, we will do this in the particular case when the parameters $\nu=(\nu_{1},\nu_{2})=(0,0)$, but the same argument works in general. By Fatou's lemma, we can also restrict the summation in the definition \eqref{2.30} of $\dot{\Pi}^{\varepsilon}_{\vec{\mathbb{P}}}(f_{1},f_{2})$ on collection $\vec{\mathbb{P}}=\widetilde{\mathbb{P}'}\times\mathbb{P}''$ with arbitrary finite collections $\widetilde{\mathbb{P}'}$ and $\mathbb{P}''$ of tri-tiles, and prove the estimates are unform with respect to different choices of the set $\vec{\mathbb{P}}$.

Therefore, one can reduce the bilinear operator $\dot{\Pi}^{\varepsilon}_{\vec{\mathbb{P}}}$ further to the discrete bilinear model operator $\Pi^{\varepsilon}_{\vec{\mathbb{P}}}$ defined by
\begin{equation}\label{model1}
  \Pi^{\varepsilon}_{\vec{\mathbb{P}}}(f_{1},f_{2})(x):=\sum_{\vec{P}\in\vec{\mathbb{P}}}\frac{C^{\varepsilon}_{Q_{\vec{P}}}}{|I_{\vec{P}}|^{\frac{1}{2}}}
  \langle f_{1},\Phi^{1}_{\vec{P}_{1}}\rangle\langle f_{2},\Phi^{2}_{\vec{P}_{2}}\rangle\Phi^{3}_{\vec{P}_{3}}(x),
\end{equation}
where $\Phi^{j}_{\vec{P}_{j}}:=\Phi^{j,(0,0)}_{\vec{P}_{j}}$ for $j=1,2,3$ respectively, $\vec{\mathbb{P}}=\widetilde{\mathbb{P}'}\times\mathbb{P}''$ with arbitrary finite collection $\widetilde{\mathbb{P}'}$ of tri-tiles and arbitrary finite sparse collection $\mathbb{P}''$ of rank $1$. As have discussed above, we now reach a conclusion that the proof of Theorem \ref{main1} can be reduced to proving the following $L^{p}$ estimates for discrete bilinear model operators $\Pi^{\varepsilon}_{\vec{\mathbb{P}}}$.

\begin{prop}\label{equivalent1}
If the finite set $\vec{\mathbb{P}}$ is chosen arbitrarily as above, then the operator $\Pi^{\varepsilon}_{\vec{\mathbb{P}}}$ given by \eqref{model1} maps $L^{p_{1}}(\mathbb{R}^{2})\times L^{p_{2}}(\mathbb{R}^{2})\rightarrow L^{p}(\mathbb{R}^{2})$ boundedly for any $1<p_{1},p_{2}\leq\infty$ satisfying $\frac{1}{p}=\frac{1}{p_{1}}+\frac{1}{p_{2}}$ and $\frac{2}{3}<p<\infty$. Moreover, the implicit constants in the bounds depend only on $\varepsilon$, $p_{1}$, $p_{2}$, $p$ and are independent of the particular choice of finite collection $\vec{\mathbb{P}}$.
\end{prop}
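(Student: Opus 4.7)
My plan is to prove Proposition \ref{equivalent1} by the now-standard reduction to restricted weak type estimates on the adjoint trilinear form
\[
\Lambda^{\varepsilon}_{\vec{\mathbb{P}}}(f_1,f_2,f_3) := \sum_{\vec{P}\in\vec{\mathbb{P}}} \frac{C^{\varepsilon}_{Q_{\vec{P}}}}{|I_{\vec{P}}|^{1/2}} \langle f_1,\Phi^{1}_{\vec{P}_{1}}\rangle \langle f_2,\Phi^{2}_{\vec{P}_{2}}\rangle \langle f_3,\Phi^{3}_{\vec{P}_{3}}\rangle,
\]
followed by multilinear Marcinkiewicz interpolation in the style of \cite{MTT1,MTT2}. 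Concretely, it suffices to show that for any triple of measurable sets $E_1,E_2,E_3\subseteq\mathbb{R}^{2}$ of finite measure, there is a major subset $E'_3\subseteq E_3$ with $|E'_3|\geq\tfrac{1}{2}|E_3|$ such that $|\Lambda^{\varepsilon}_{\vec{\mathbb{P}}}(f_1,f_2,f_3)|\lesssim_{\varepsilon}\prod_{j=1}^{3}|E_j|^{\theta_j}$ for every $|f_j|\leq\chi_{E_j}$ ($j=1,2$) and $|f_3|\leq\chi_{E'_3}$, with $(\theta_1,\theta_2,\theta_3)$ ranging over an open set of tuples covering the full admissible range $1<p_1,p_2\leq\infty$, $\tfrac{2}{3}<p<\infty$.

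The standard choice of exceptional set is
\[
U:=\bigl\{MM(\chi_{E_1})>C|E_1|/|E_3|\bigr\}\cup\bigl\{MM(\chi_{E_2})>C|E_2|/|E_3|\bigr\},
\]
with $MM$ the bi-parameter strong maximal operator, and $E'_3:=E_3\setminus U$. First I would split the $\vec{P}$-sum by the $x_1$-scale $k'\in\mathbb{Z}$ (with $|I_{\widetilde{P'}}|=2^{-k'}$). For each fixed $k'$, the $\widetilde{P'}\in\widetilde{\mathbb{P}'}$ tri-tiles have a Coifman--Meyer / paraproduct-like structure in $x_1$, while $\mathbb{P}''$ has the sparse, rank-$1$ BHT-type structure in $x_2$. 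I would then apply the generic decomposition lemma (Lemma \ref{generic decomposition}) in the $x_1$ variable to separate the $\widetilde{P'}$'s whose spatial interval lies inside the $x_1$-projection of $U$ from those that do not, producing rapid spatial decay factors in the distance of $\widetilde{P'}$ to $E'_3$ along the $x_1$-axis.

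On the $x_2$-side I would carry out a stopping-time decomposition of $\mathbb{P}''$ into trees, controlled by the $L^2$ sizes and $L^2$ energies recalled in Section 3. A single tree contributes via the standard tree estimate: one factor is bounded by the $L^\infty$-type size of the relevant $f_j$ on the tree top and the other two by energies, which one then sums over trees by the Bessel-type John--Nirenberg argument of \cite{MTT2}, yielding bounds in terms of $\|f_j\|_{L^{p_j}}$. The bi-parameter form is then assembled by combining these one-parameter estimates in $x_2$ with the $x_1$ size/energy estimates coming from the generic decomposition, and weighting by $C^{\varepsilon}_{Q_{\vec{P}}}$.

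The main obstacle is twofold. First, for each fixed $k'$ there are infinitely many $\widetilde{P'}$ sharing the same frequency cube $Q_{\widetilde{P'}}$ but tiling all of $\mathbb{R}$ in space, so one cannot set up a tree/BMO decomposition in the $x_1$ direction directly; instead, one must rely on the wave-packet decay together with the generic decomposition to gain summable polynomial factors in the distance from the spatial support to $E'_3$. Second, the scale parameter $k'$ itself produces an a priori divergent sum; the remedy is precisely the decay assumption \eqref{1.10} on $m^{\varepsilon}$, which via Lemma \ref{Fourier coefficient1} gives $\sum_{k'\in\mathbb{Z}}C^{\varepsilon}_{k'}\leq C_{\varepsilon}<+\infty$, absorbing the extra scale at the price of a constant blowing up as $\varepsilon\to 0$, consistent with the statement. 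Combining the tree-level bounds, the energy summation over trees, the rapid $x_1$-decay, and the $\ell^{1}$-summability in $k'$ yields the desired restricted weak type estimate uniformly in the finite collection $\vec{\mathbb{P}}$, and multilinear interpolation then produces the full range of strong $L^p$ bounds.
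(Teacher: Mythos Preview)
Your proposal is essentially the same strategy the paper uses: reduce to restricted weak type on the trilinear form, apply the generic decomposition (Lemma~\ref{generic decomposition}) in $x_1$, run the one-parameter size/energy machinery of \cite{MTT2} in $x_2$ for each fixed $\widetilde{P'}$, sum over the disjoint spatial intervals $I_{\widetilde{P'}}$ at a fixed scale by H\"older, and then sum over $k'$ using $\sum_{k'}C^{\varepsilon}_{k'}<\infty$ from Lemma~\ref{Fourier coefficient1}.

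There is one slip worth flagging before you implement it. You write that the generic decomposition ``produc[es] rapid spatial decay factors in the distance of $\widetilde{P'}$ to $E'_3$ along the $x_1$-axis.'' That is not how the argument runs: the generic decomposition gives \emph{compact} support $\operatorname{supp}\Phi^{3,\ell}_{\widetilde{P'_3}}\subseteq 2^{\ell}I_{\widetilde{P'}}$ in $x_1$, and the decay one actually uses for summation is in the $x_2$ direction. Concretely, when $I_{\vec{P}}\subseteq\Omega$, the compact $x_1$-support forces $f_3$ (supported in $E'_3\subseteq U^{c}$) to live outside a dilate $2^{d''}I_{P''}$ in $x_2$, and the wave-packet decay of $\Phi^{3}_{P''_3}$ then yields a factor $2^{-Md''}$ that sums in $d''$. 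For this to work the exceptional set must be built with $\ell$-dependent thresholds and an enlargement step, i.e.\ one takes $\Omega_{-10\ell}$ with cutoff $C2^{10\ell}$, then $\widetilde{\Omega}_{-10\ell}:=\{MM(\chi_{\Omega_{-10\ell}})>2^{-\ell}\}$, and $U:=\bigcup_{\ell}\widetilde{\Omega}_{-10\ell}$; your simpler $U$ does not give the needed inclusion $2^{\ell}I_{\widetilde{P'}}\times I_{P''}\subseteq\widetilde{\Omega}_{-10\ell}$ when $I_{\vec{P}}\subseteq\Omega_{-10\ell}$. With that correction, your outline matches the paper's proof.
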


\subsubsection{Discretization for bilinear, bi-parameter operators $T^{(2)}_{\widetilde{m}^{\varepsilon}}$}
We will proceed the discretization procedure as follows. First, we need to decompose the symbol $\widetilde{m}^{\varepsilon}(\xi)$ in a natural way. To this end, for both the spatial variables $x_{i}$ ($i=1,2$), we decompose the regions $\{\bar{\xi}_{i}=(\xi^{i}_{1},\xi^{i}_{2})\in\mathbb{R}^{2}: \xi^{i}_{1}\neq\xi^{i}_{2}\}$ by using \emph{Whitney squares} with respect to the singularity lines $\Gamma_{i}=\{\xi^{i}_{1}=\xi^{i}_{2}\}$ ($i=1,2$) respectively. Since the \emph{Whitney dyadic square} decomposition for the $x_{2}$ direction has already been described in \eqref{2.11}, \eqref{2.12}, \eqref{2.13} and \eqref{2.14} in sub-subsection 2.1.1, we only need to discuss the \emph{Whitney} decomposition with respect to the singularity line $\Gamma_{1}$ in $x_{1}$ direction.

To be specific, we consider the collection $\mathbb{Q}'$ of all shifted dyadic squares $Q'=Q'_{1}\times Q'_{2}$ satisfying
\begin{equation}\label{2.31}
  Q'\subseteq\{(\xi^{1}_{1},\xi^{1}_{2})\in\mathbb{R}^{2}: \xi^{1}_{1}\neq\xi^{1}_{2}\}, \,\,\,\,\,\,\,\,\,\, dist(Q',\Gamma_{1})\simeq10^{4}diam(Q').
\end{equation}
We can split the collection $\mathbb{Q'}$ into two disjoint sub-collections, that is, define
\begin{equation}\label{2.32}
  \mathbb{Q}'_{\mathbb{I}}:=\{Q'\in\mathbb{Q}': Q'\subseteq\{\xi^{1}_{1}<\xi^{1}_{2}\}\}, \,\,\,\,\,\,\,\,\,
  \mathbb{Q}'_{\mathbb{II}}:=\{Q'\in\mathbb{Q}': Q'\subseteq\{\xi^{1}_{1}>\xi^{1}_{2}\}\}.
\end{equation}
Since the set of squares $\{\frac{7}{10}Q': Q'\in\mathbb{Q}'\}$ also forms a finitely overlapping cover of the region $\{\xi^{1}_{1}\neq\xi^{1}_{2}\}$, we can apply a standard partition of unity and write the symbol $\chi_{\{\xi^{1}_{1}\neq\xi^{1}_{2}\}}$ as
\begin{equation}\label{2.33}
  \chi_{\{\xi^{1}_{1}\neq\xi^{1}_{2}\}}=\sum_{Q'\in\mathbb{Q}'}\phi_{Q'}(\xi^{1}_{1},\xi^{1}_{2})=
  \{\sum_{Q'\in\mathbb{Q}'_{\mathbb{I}}}+\sum_{Q'\in\mathbb{Q}'_{\mathbb{II}}}\}\phi_{Q'}(\xi^{1}_{1},\xi^{1}_{2})
  =\chi_{\{\xi^{1}_{1}<\xi^{1}_{2}\}}+\chi_{\{\xi^{1}_{1}>\xi^{1}_{2}\}},
\end{equation}
where each $\phi_{Q'}$ is a smooth bump function adapted to $Q'$ and supported in $\frac{8}{10}Q'$.

Notice that by splitting the symbol $\widetilde{m}^{\varepsilon}(\xi)$, we can decompose the operator $T^{(2)}_{\widetilde{m}^{\varepsilon}}$ correspondingly into a finite sum of several parts and we only need to discuss in detail arbitrary one of them. From the decompositions \eqref{2.13} and \eqref{2.33}, we obtain that
\begin{eqnarray}\label{2.34}
% \nonumber to remove numbering (before each equation)
 \nonumber \widetilde{m}^{\varepsilon}(\bar{\xi}_{1},\bar{\xi}_{2})&=&\{\sum_{\substack{Q'\in\mathbb{Q}'_{\mathbb{I}},\\Q''\in\mathbb{Q}''_{\mathbb{I}}}}
  +\sum_{\substack{Q'\in\mathbb{Q}'_{\mathbb{I}},\\Q''\in\mathbb{Q}''_{\mathbb{II}}}}+\sum_{\substack{Q'\in\mathbb{Q}'_{\mathbb{II}},\\Q''\in\mathbb{Q}''_{\mathbb{I}}}}
  +\sum_{\substack{Q'\in\mathbb{Q}'_{\mathbb{II}},\\Q''\in\mathbb{Q}''_{\mathbb{II}}}}\}\phi_{Q'}(\xi^{1}_{1},\xi^{1}_{2})\phi_{Q''}(\xi^{2}_{1},\xi^{2}_{2})
  \cdot\widetilde{m}^{\varepsilon}(\bar{\xi}_{1},\bar{\xi}_{2})\\
  &=:&\widetilde{m}^{\varepsilon}_{\mathbb{I},\mathbb{I}}(\bar{\xi}_{1},\bar{\xi}_{2})+\widetilde{m}^{\varepsilon}_{\mathbb{I},\mathbb{II}}(\bar{\xi}_{1},\bar{\xi}_{2})
  +\widetilde{m}^{\varepsilon}_{\mathbb{II},\mathbb{I}}(\bar{\xi}_{1},\bar{\xi}_{2})+\widetilde{m}^{\varepsilon}_{\mathbb{II},\mathbb{II}}(\bar{\xi}_{1},\bar{\xi}_{2}).
\end{eqnarray}
One can easily observe that we only need to discuss in detail one term in the decomposition \eqref{2.34}, since the other term can be treated in the same way. Without loss of generality, we will only consider the third term in the decomposition \eqref{2.34}, which can be written as
\begin{equation}\label{2.35}
  \widetilde{m}^{\varepsilon}_{\mathbb{II},\mathbb{I}}(\bar{\xi}_{1},\bar{\xi}_{2}):=\sum_{Q'\in\mathbb{Q}'_{\mathbb{II}},Q''\in\mathbb{Q}''_{\mathbb{I}}}
  \widetilde{m}^{\varepsilon}(\bar{\xi}_{1},\bar{\xi}_{2})\phi_{Q'}(\xi^{1}_{1},\xi^{1}_{2})\phi_{Q''}(\xi^{2}_{1},\xi^{2}_{2}).
\end{equation}

In a word, we only need to consider the bilinear operator $T^{(2)}_{\widetilde{m}^{\varepsilon}_{\mathbb{II},\mathbb{I}}}$ given by
\begin{equation}\label{2.36}
  T^{(2)}_{\widetilde{m}^{\varepsilon}_{\mathbb{II},\mathbb{I}}}(f_{1},f_{2})(x):=\sum_{Q'\in\mathbb{Q}'_{\mathbb{II}}, \, Q''\in\mathbb{Q}''_{\mathbb{I}}}
  \int_{\mathbb{R}^{4}}\widetilde{m}^{\varepsilon}(\xi)\phi_{Q'}(\bar{\xi}_{1})\phi_{Q''}(\bar{\xi}_{2})\widehat{f_{1}}(\xi_{1})\widehat{f_{2}}(\xi_{2})
  e^{2\pi ix\cdot(\xi_{1}+\xi_{2})}d\xi
\end{equation}
from now on, and the proof of Theorem \ref{main2} can be reduced to proving the following $L^{p}$ estimates for $T^{(2)}_{\widetilde{m}^{\varepsilon}_{\mathbb{II},\mathbb{I}}}$:
\begin{equation}\label{2.37}
  \|T^{(2)}_{\widetilde{m}^{\varepsilon}_{\mathbb{II},\mathbb{I}}}(f_{1},f_{2})\|_{L^{p}(\mathbb{R}^{2})}\lesssim_{\varepsilon,p,p_{1},p_{2}}
  \|f_{1}\|_{L^{p_{1}}(\mathbb{R}^{2})}\cdot\|f_{2}\|_{L^{p_{2}}(\mathbb{R}^{2})},
\end{equation}
as long as $1<p_{1}, \, p_{2}\leq\infty$ and $0<\frac{1}{p}=\frac{1}{p_{1}}+\frac{1}{p_{2}}<\frac{3}{2}$.

Observe that there exist bump functions $\phi_{Q'_{i},i}$ ($i=1,2$) adapted to the shifted dyadic interval $Q'_{i}$ such that $supp \, \phi_{Q'_{i},i}\subseteq\frac{9}{10}Q'_{i}$ and $\phi_{Q'_{i},i}\equiv1$ on $\frac{8}{10}Q'_{i}$ ($i=1,2$) respectively, and $supp \, \phi_{Q'}\subseteq\frac{8}{10}Q'$, thus one has $\phi_{Q'_{1},1}\cdot\phi_{Q'_{2},2}\equiv1$ on $supp \, \phi_{Q'}$. Since $\xi^{1}_{1}\in supp \, \phi_{Q'_{1},1}\subseteq\frac{9}{10}Q'_{1}$ and $\xi^{1}_{2}\in supp \, \phi_{Q'_{2},2}\subseteq\frac{9}{10}Q'_{2}$, it follows that $-\xi^{1}_{1}-\xi^{1}_{2}\in-\frac{9}{10}Q'_{1}-\frac{9}{10}Q'_{2}$, and as a consequence, one can find a shifted dyadic interval $Q'_{3}$ with the property that $-\frac{9}{10}Q'_{1}-\frac{9}{10}Q'_{2}\subseteq\frac{7}{10}Q'_{3}$ and also satisfying $|Q'_{1}|=|Q'_{2}|\simeq|Q'_{3}|$. In particular, there exists bump function $\phi_{Q'_{3},3}$ adapted to $Q'_{3}$ and supported in $\frac{9}{10}Q'_{3}$ such that $\phi_{Q'_{3},3}\equiv1$ on $-\frac{9}{10}Q'_{1}-\frac{9}{10}Q'_{2}$. Recall that the smooth functions $\phi_{Q''_{j},j}$ ($j=1,2,3$) and shifted dyadic intervals $Q''_{3}$ have already been defined in sub-subsection 2.1.1.

We denote by $\mathbf{Q}'$ the collection of all shifted dyadic quasi-cubes $Q':=Q'_{1}\times Q'_{2}\times Q'_{3}$ with $Q'_{1}\times Q'_{2}\in\mathbb{Q}'_{\mathbb{II}}$ and $Q'_{3}$ be defined as above, and denote by $\mathbf{Q}''$ the collection of all shifted dyadic quasi-cubes $Q'':=Q''_{1}\times Q''_{2}\times Q''_{3}$ with $Q''_{1}\times Q''_{2}\in\mathbb{Q}''_{\mathbb{I}}$ and $Q''_{3}$ be defined in sub-subsection 2.1.1.

In fact, it is not difficult to see that the collections $\mathbf{Q}'$ and $\mathbf{Q}''$ can be split into a sum of finitely many \emph{sparse} collection of shifted dyadic quasi-cubes. Therefore, we can assume from now on that the collections $\mathbf{Q}'$ and $\mathbf{Q}''$ is \emph{sparse}.

Assuming this we then observe that, for any $Q'$ in such a sparse collection $\mathbf{Q}'$, there exists a unique shifted dyadic cube $\widetilde{Q'}$ in $\mathbb{R}^{3}$ such that $Q'\subseteq\frac{7}{10}\widetilde{Q'}$ and with property that $diam(Q')\simeq diam(\widetilde{Q'})$. This allows us in particular to assume further that $\mathbf{Q}'$ is a sparse collection of shifted dyadic cubes (that is, $|Q'_{1}|=|Q'_{2}|=|Q'_{3}|=\ell(Q')$). Similarly, we can also assume that $\mathbf{Q}''$ is a sparse collection of shifted dyadic cubes.

Now consider the trilinear form $\Lambda^{(2)}_{\widetilde{m}^{\varepsilon}_{\mathbb{II},\mathbb{I}}}(f_{1},f_{2},f_{3})$ associated to $T^{(2)}_{\widetilde{m}^{\varepsilon}_{\mathbb{II},\mathbb{I}}}(f_{1},f_{2})$, which can be written as
\begin{eqnarray}\label{2.38}
% \nonumber to remove numbering (before each equation)
   &&\Lambda^{(2)}_{\widetilde{m}^{\varepsilon}_{\mathbb{II},\mathbb{I}}}(f_{1},f_{2},f_{3})
   :=\int_{\mathbb{R}^{2}}T^{(2)}_{\widetilde{m}^{\varepsilon}_{\mathbb{II},\mathbb{I}}}(f_{1},f_{2})(x)f_{3}(x)dx\\
 \nonumber &=&\sum_{Q'\in\mathbf{Q}',Q''\in\mathbf{Q}''}\int_{\xi_{1}+\xi_{2}+\xi_{3}=0}
 \widetilde{m}^{\varepsilon}_{Q',Q''}(\xi_{1},\xi_{2},\xi_{3})(f_{1}\ast(\check{\phi}_{Q'_{1},1}\otimes\check{\phi}_{Q''_{1},1}))^{\wedge}(\xi_{1})\\
 \nonumber &&\quad\quad\quad\quad\quad\quad
 \times(f_{2}\ast(\check{\phi}_{Q'_{2},2}\otimes\check{\phi}_{Q''_{2},2}))^{\wedge}(\xi_{2})
 (f_{3}\ast(\check{\phi}_{Q'_{3},3}\otimes\check{\phi}_{Q''_{3},3}))^{\wedge}(\xi_{3})d\xi_{1}d\xi_{2}d\xi_{3},
\end{eqnarray}
where $\xi_{i}=(\xi^{1}_{i},\xi^{2}_{i})$ for $i=1,2,3$, while
\begin{equation}\label{2.39}
  \widetilde{m}^{\varepsilon}_{Q',Q''}(\xi_{1},\xi_{2},\xi_{3}):=\widetilde{m}^{\varepsilon}(\xi_{1},\xi_{2})\cdot
  ((\phi_{Q'_{1}\times Q'_{2}}\cdot\widetilde{\phi}_{Q'_{3},3})\otimes(\phi_{Q''_{1}\times Q''_{2}}\cdot\widetilde{\phi}_{Q''_{3},3}))(\xi_{1},\xi_{2},\xi_{3}),
\end{equation}
where $\widetilde{\phi}_{Q'_{3},3}$ is an appropriate smooth function of variable $\xi^{1}_{3}$ supported on a slightly larger interval (with a constant magnification independent of $\ell(Q')$) than $supp \, \phi_{Q'_{3},3}$, which equals $1$ on $supp \, \phi_{Q'_{3},3}$, and $\widetilde{\phi}_{Q''_{3},3}$ is an appropriate smooth function of variable $\xi^{2}_{3}$ supported on a slightly larger interval (with a constant magnification independent of $\ell(Q'')$) than $supp \, \phi_{Q''_{3},3}$, which equals $1$ on $supp \, \phi_{Q''_{3},3}$. We can decompose $\widetilde{m}^{\varepsilon}_{Q',Q''}(\xi_{1},\xi_{2},\xi_{3})$ as a Fourier series:
\begin{equation}\label{2.40}
  \widetilde{m}^{\varepsilon}_{Q',Q''}(\xi_{1},\xi_{2},\xi_{3})=\sum_{\vec{l}_{1},\vec{l}_{2},\vec{l}_{3}\in\mathbb{Z}^{2}}
  \widetilde{C}^{\varepsilon,Q',Q''}_{\vec{l}_{1},\vec{l}_{2},\vec{l}_{3}}
  e^{2\pi i(l^{'}_{1},l^{'}_{2},l^{'}_{3})\cdot(\xi^{1}_{1},\xi^{1}_{2},\xi^{1}_{3})/\ell(Q')}
  e^{2\pi i(l^{''}_{1},l^{''}_{2},l^{''}_{3})\cdot(\xi^{2}_{1},\xi^{2}_{2},\xi^{2}_{3})/\ell(Q'')},
\end{equation}
where the Fourier coefficients $C^{\varepsilon,Q',Q''}_{\vec{l}_{1},\vec{l}_{2},\vec{l}_{3}}$ are given by
\begin{eqnarray}\label{2.41}
% \nonumber to remove numbering (before each equation)
 \nonumber \widetilde{C}^{\varepsilon,Q',Q''}_{\vec{l}_{1},\vec{l}_{2},\vec{l}_{3}}&=&\int_{\mathbb{R}^{6}}
  \widetilde{m}^{\varepsilon}_{Q',Q''}((\ell(Q')\xi^{1}_{1},\ell(Q'')\xi^{2}_{1}),(\ell(Q')\xi^{1}_{2},\ell(Q'')\xi^{2}_{2}),
  (\ell(Q')\xi^{1}_{3},\ell(Q'')\xi^{2}_{3}))\\
  && \quad\quad\quad\quad\quad\quad\quad\quad\quad\quad\quad\quad\quad\quad\quad\quad\quad \times e^{-2\pi i(\vec{l}_{1}\cdot\xi_{1}+\vec{l}_{2}\cdot\xi_{2}+\vec{l}_{3}\cdot\xi_{3})}d\xi_{1}d\xi_{2}d\xi_{3}.
\end{eqnarray}
Then, by a straightforward calculation, we can rewrite \eqref{2.38} as
\begin{eqnarray}\label{2.42}
% \nonumber to remove numbering (before each equation)
\nonumber &&\Lambda^{(2)}_{\widetilde{m}^{\varepsilon}_{\mathbb{II},\mathbb{I}}}(f_{1},f_{2},f_{3})=\sum_{Q'\in\mathbf{Q}',Q''\in\mathbf{Q}''}
\sum_{\vec{l}_{1},\vec{l}_{2},\vec{l}_{3}\in\mathbb{Z}^{2}}\widetilde{C}^{\varepsilon,Q',Q''}_{\vec{l}_{1},\vec{l}_{2},\vec{l}_{3}}\int_{\mathbb{R}^{2}}\\
\nonumber &&(f_{1}\ast(\check{\phi}_{Q'_{1},1}\otimes\check{\phi}_{Q''_{1},1}))(x-(\frac{l'_{1}}{\ell(Q')},\frac{l^{''}_{1}}{\ell(Q'')}))
  (f_{2}\ast(\check{\phi}_{Q'_{2},2}\otimes\check{\phi}_{Q''_{2},2}))(x-(\frac{l'_{2}}{\ell(Q')},\frac{l^{''}_{2}}{\ell(Q'')}))\\
&&\quad\quad\quad\quad\quad\quad\quad\quad\quad\quad\quad\quad\quad\quad\quad\quad
\times(f_{3}\ast(\check{\phi}_{Q'_{3},3}\otimes\check{\phi}_{Q''_{3},3}))(x-(\frac{l'_{3}}{\ell(Q')},\frac{l^{''}_{3}}{\ell(Q'')}))dx.
\end{eqnarray}

Now we define $\phi^{l'_{i}}_{Q'_{i},i}:=e^{2\pi il'_{i}\xi^{1}_{i}/\ell(Q')}\cdot\phi_{Q'_{i},i}$ and $\phi^{l''_{i}}_{Q''_{i},i}:=e^{2\pi il''_{i}\xi^{2}_{i}/\ell(Q'')}\cdot\phi_{Q''_{i},i}$ for $i=1,2,3$. Since any $Q'\in\mathbf{Q}'$ and $Q''\in\mathbf{Q}''$ are both shifted dyadic cubes, there exists integers $k',k''\in\mathbb{Z}$ such that $\ell(Q')=|Q'_{1}|=|Q'_{2}|=|Q'_{3}|=2^{k'}$ and $\ell(Q'')=|Q''_{1}|=|Q''_{2}|=|Q''_{3}|=2^{k''}$ respectively. By splitting the integral region $\mathbb{R}^{2}$ into the union of unit squares, the $L^{2}$-normalization procedure and simple calculations, we can rewrite \eqref{2.42} as
\begin{eqnarray}\label{2.43}
% \nonumber to remove numbering (before each equation)
   &&\Lambda^{(2)}_{\widetilde{m}^{\varepsilon}_{\mathbb{II},\mathbb{I}}}(f_{1},f_{2},f_{3})\\
  \nonumber &=&\sum_{\vec{l}_{1},\vec{l}_{2},\vec{l}_{3}\in\mathbb{Z}^{2}}\sum_{Q'\in\mathbf{Q}',Q''\in\mathbf{Q}''}
  \int_{0}^{1}\int_{0}^{1}\sum_{\substack{I' \,\, dyadic, \\ |I'|=2^{-k'}}}\sum_{\substack{I'' \,\, dyadic, \\ |I''|=2^{-k''}}}\frac{\widetilde{C}^{\varepsilon,Q',Q''}_{\vec{l}_{1},\vec{l}_{2},\vec{l}_{3}}}{|I'|^{\frac{1}{2}}\times|I''|^{\frac{1}{2}}}
  \langle f_{1},\check{\phi}^{l'_{1},\lambda'}_{I',Q'_{1},1}\otimes\check{\phi}^{l''_{1},\lambda''}_{I'',Q''_{1},1}\rangle\\
  \nonumber &&\quad\quad\quad\quad\quad\quad\quad\quad\quad\quad\quad\quad\times
  \langle f_{2},\check{\phi}^{l'_{2},\lambda'}_{I',Q'_{2},2}\otimes\check{\phi}^{l''_{2},\lambda''}_{I'',Q''_{2},2}\rangle
  \langle f_{3},\check{\phi}^{l'_{3},\lambda'}_{I',Q'_{3},3}\otimes\check{\phi}^{l''_{3},\lambda''}_{I'',Q''_{3},3}\rangle d\lambda'd\lambda''\\
 \nonumber &=:&\sum_{\vec{l}_{1},\vec{l}_{2},\vec{l}_{3}\in\mathbb{Z}^{2}}\int_{0}^{1}\int_{0}^{1}\sum_{\vec{P}:=P'\otimes P''\in\vec{\mathbb{P}}}
 \frac{\widetilde{C}^{\varepsilon}_{Q_{\vec{P}},\vec{l}_{1},\vec{l}_{2},\vec{l}_{3}}}{|I_{\vec{P}}|^{\frac{1}{2}}}
 \langle f_{1},\Phi^{1,\vec{l}_{1},\lambda}_{\vec{P}_{1}}\rangle\langle f_{2},\Phi^{2,\vec{l}_{2},\lambda}_{\vec{P}_{2}}\rangle
 \langle f_{3},\Phi^{3,\vec{l}_{3},\lambda}_{\vec{P}_{3}}\rangle d\lambda,
\end{eqnarray}
where the Fourier coefficients $\widetilde{C}^{\varepsilon}_{Q_{\vec{P}},\vec{l}_{1},\vec{l}_{2},\vec{l}_{3}}:=\widetilde{C}^{\varepsilon,Q',Q''}_{\vec{l}_{1},\vec{l}_{2},\vec{l}_{3}}$, the \emph{tri-tiles} $P':=(P'_{1},P'_{2},P'_{3})$ and $P'':=(P''_{1},P''_{2},P''_{3})$, the \emph{tiles} $P'_{i}:=I_{P'_{i}}\times\omega_{P'_{i}}$ with $I_{P'_{i}}:=I'=2^{-k'}[n',n'+1]=:I_{P'}$ and the frequency intervals $\omega_{P'_{i}}:=Q'_{i}$ for $i=1,2,3$, the \emph{tiles} $P''_{j}:=I_{P''_{j}}\times\omega_{P''_{j}}$ with $I_{P''_{j}}:=I''=2^{-k''}[n'',n''+1]=:I_{P''}$ and the frequency intervals $\omega_{P''_{j}}:=Q''_{j}$ for $j=1,2,3$, the frequency cubes $Q_{P'}:=\omega_{P'_{1}}\times\omega_{P'_{2}}\times\omega_{P'_{3}}$ and $Q_{P''}:=\omega_{P''_{1}}\times\omega_{P''_{2}}\times\omega_{P''_{3}}$, $\mathbb{P}'$ denotes a collection of such tri-tiles $P'$ and $\mathbb{P}''$ denotes a collection of such tri-tiles $P''$, the bi-tiles $\vec{P}_{1}$, $\vec{P}_{2}$ and $\vec{P}_{3}$ are defined by
\[\vec{P}_{i}:=(P'_{i},P''_{i})=(2^{-k'}[n',n'+1]\times Q'_{i}, \, 2^{-k''}[n'',n''+1]\times Q''_{i})\]
for $i=1,2,3$; the bi-parameter tri-tile $\vec{P}:=P'\otimes P''=(\vec{P}_{1},\vec{P}_{2},\vec{P}_{3})$, the rectangles $I_{\vec{P}_{i}}:=I_{P'_{i}}\times I_{P''_{i}}=I_{P'}\times I_{P''}=:I_{\vec{P}}$ for $i=1,2,3$ and hence $|I_{\vec{P}}|=|I_{P'}\times I_{P''}|=|I_{\vec{P}_{1}}|=|I_{\vec{P}_{2}}|=|I_{\vec{P}_{3}}|=2^{-k'}\cdot2^{-k''}$, the double frequency cube $Q_{\vec{P}}:=(Q_{P'},Q_{P''})=(\omega_{P'_{1}}\times\omega_{P'_{2}}\times\omega_{P'_{3}},\omega_{P''_{1}}\times\omega_{P''_{2}}\times\omega_{P''_{3}})$, $\vec{\mathbb{P}}:=\mathbb{P}'\times\mathbb{P}''$ denotes a collection of such bi-parameter tri-tiles $\vec{P}$; while the $L^{2}$-normalized wave packets $\Phi^{i,l'_{i},\lambda'}_{P'_{i}}$ associated with the Heisenberg boxes $P'_{i}$ are defined by $\Phi^{i,l'_{i},\lambda'}_{P'_{i}}(x_{1}):=\check{\phi}^{l'_{i},\lambda'}_{I',Q'_{i},i}(x_{1}):=2^{-\frac{k'}{2}}\overline{\check{\phi}^{l'_{i}}_{Q'_{i},i}(2^{-k'}(n'+\lambda')-x_{1})}$ for $i=1,2,3$, the $L^{2}$-normalized wave packets $\Phi^{i,l''_{i},\lambda''}_{P''_{i}}$ associated with the Heisenberg boxes $P''_{i}$ are defined by $\Phi^{i,l''_{i},\lambda''}_{P''_{i}}(x_{2}):=\check{\phi}^{l''_{i},\lambda''}_{I'',Q''_{i},i}(x_{2}):=2^{-\frac{k''}{2}}\overline{\check{\phi}^{l''_{i}}_{Q''_{i},i}(2^{-k''}(n''+\lambda'')-x_{2})}$ for $i=1,2,3$, the smooth bump functions $\Phi^{i,\vec{l}_{i},\lambda}_{\vec{P}_{i}}:=\Phi^{i,l'_{i},\lambda'}_{P'_{i}}\otimes\Phi^{i,l''_{i},\lambda''}_{P''_{i}}$ for $i=1,2,3$.

We have the following rapid decay estimates of the Fourier coefficients $\widetilde{C}^{\varepsilon}_{Q_{\vec{P}},\vec{l}_{1},\vec{l}_{2},\vec{l}_{3}}$ with respect to the parameters $\vec{l}_{1},\vec{l}_{2},\vec{l}_{3}\in\mathbb{Z}^{2}$.
\begin{lem}\label{Fourier coefficient2}
The Fourier coefficients $\widetilde{C}^{\varepsilon}_{Q_{\vec{P}},\vec{l}_{1},\vec{l}_{2},\vec{l}_{3}}$ satisfy estimates
\begin{equation}\label{FC-2}
  |\widetilde{C}^{\varepsilon}_{Q_{\vec{P}},\vec{l}_{1},\vec{l}_{2},\vec{l}_{3}}|\lesssim \prod_{j=1}^{3}\frac{1}{(1+|\vec{l}_{j}|)^{M}}
  \cdot\langle\log_{2}\ell(Q_{P'})\rangle^{-(1+\varepsilon)}
\end{equation}
for any bi-parameter tri-tile $\vec{P}\in\vec{\mathbb{P}}$, where $M$ is sufficiently large.
\end{lem}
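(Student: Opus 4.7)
The proof parallels that of Lemma \ref{Fourier coefficient1} very closely, so my plan is to carry out exactly the same change of variables and integration-by-parts procedure, and then apply the strengthened symbol estimate \eqref{1.10'} in place of \eqref{1.10} at the last step. Starting from the representation \eqref{2.41} of $\widetilde{C}^{\varepsilon,Q',Q''}_{\vec{l}_1,\vec{l}_2,\vec{l}_3}$, I first rescale the integration variables by $\xi_j^1\mapsto 2^{k'}\xi_j^1$ and $\xi_j^2\mapsto 2^{k''}\xi_j^2$, where $\ell(Q_{P'})=2^{k'}$ and $\ell(Q_{P''})=2^{k''}$. This pulls out an overall factor from the change of variables and localizes the integrand on fixed unit-size cubes $Q^0_{P'}\times Q^0_{P''}$ (built exactly as in \eqref{2.25}--\eqref{2.26}), while the exponential simplifies to $e^{-2\pi i(\vec{l}_1\cdot\xi_1+\vec{l}_2\cdot\xi_2+\vec{l}_3\cdot\xi_3)}$.

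Next I would integrate by parts $M$ times with respect to each variable group $\xi_j$ ($j=1,2,3$) to produce the rapid decay factor $\prod_{j=1}^{3}(1+|\vec l_j|)^{-M}$. The derivatives fall on the rescaled symbol $\widetilde m^{\varepsilon}(2^{k'}\bar\xi_1,2^{k''}\bar\xi_2)$ (multiplied by the fixed smooth cutoffs $\phi_{Q'_1\times Q'_2}\cdot\widetilde\phi_{Q'_3,3}$ and $\phi_{Q''_1\times Q''_2}\cdot\widetilde\phi_{Q''_3,3}$, whose derivatives of all orders are $O(1)$ after rescaling). By the chain rule, a derivative of order $\alpha'$ in $\bar\xi_1$ and $\alpha''$ in $\bar\xi_2$ contributes a factor $2^{k'|\alpha'|}2^{k''|\alpha''|}$ together with $\bigl|\partial^{\alpha'}_{\bar\xi_1}\partial^{\alpha''}_{\bar\xi_2}\widetilde m^{\varepsilon}\bigr|$ evaluated at $(2^{k'}\bar\xi_1,2^{k''}\bar\xi_2)$. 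On the rescaled \emph{Whitney} cubes one has $\mathrm{dist}(2^{k'}\bar\xi_1,\Gamma_1)\simeq 2^{k'}$ and $\mathrm{dist}(2^{k''}\bar\xi_2,\Gamma_2)\simeq 2^{k''}$, so by \eqref{1.10'} the $2^{k'|\alpha'|}2^{k''|\alpha''|}$ is exactly cancelled by $\mathrm{dist}^{-|\alpha'|}\mathrm{dist}^{-|\alpha''|}$.

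The one genuinely new feature compared with Lemma \ref{Fourier coefficient1} is the logarithmic factor in \eqref{1.10'}, but this is precisely what I need: on the rescaled support it becomes
\[
\bigl\langle\log_2 \mathrm{dist}(2^{k'}\bar\xi_1,\Gamma_1)\bigr\rangle^{-(1+\varepsilon)}\simeq \langle k'\rangle^{-(1+\varepsilon)}=\bigl\langle\log_2\ell(Q_{P'})\bigr\rangle^{-(1+\varepsilon)},
\]
and this factor is uniform in $\bar\xi_1$ over the unit-size integration region. Since what remains is an integral of bounded functions over a fixed bounded domain, it is $O(1)$, which yields \eqref{FC-2}.

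I do not anticipate a serious obstacle here: the argument is a direct transcription of the proof of Lemma \ref{Fourier coefficient1}, the only book-keeping point being to verify that the logarithmic weight in \eqref{1.10'} survives unchanged after rescaling and depends only on $\ell(Q_{P'})$, not on $\ell(Q_{P''})$. Note in particular that \eqref{FC-2} does not include a summable bound of the type \eqref{FC-1'} from Lemma \ref{Fourier coefficient1}; this is consistent with the fact that in the setting of Theorem \ref{main2} the summability in $k'=\log_2\ell(Q_{P'})$ will instead be provided at a later stage by the logarithmic decay $\langle k'\rangle^{-(1+\varepsilon)}$ itself, precisely because $\sum_{k'\in\mathbb Z}\langle k'\rangle^{-(1+\varepsilon)}<\infty$.
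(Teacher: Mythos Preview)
Your proposal is correct and follows essentially the same approach as the paper: rescale to unit-size cubes $Q^{0}_{P'}\times Q^{0}_{P''}$, integrate by parts to gain the factors $(1+|\vec l_j|)^{-M}$, use the Whitney relations $\mathrm{dist}(2^{k'}\bar\xi_1,\Gamma_1)\simeq 2^{k'}$ and $\mathrm{dist}(2^{k''}\bar\xi_2,\Gamma_2)\simeq 2^{k''}$ to cancel the chain-rule powers against \eqref{1.10'}, and read off the residual factor $\langle\log_2\ell(Q_{P'})\rangle^{-(1+\varepsilon)}$. One small remark: the representation \eqref{2.41} is already written in rescaled variables, so no additional change of variables is needed at that point; otherwise your argument matches the paper's line for line.
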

\begin{proof}
Let $\ell(Q_{P'})=2^{k'}$ and $\ell(Q_{P''})=2^{k''}$ for $k',k''\in\mathbb{Z}$. For any $\varepsilon>0$, $\vec{l}_{1},\vec{l}_{2},\vec{l}_{3}\in\mathbb{Z}^{2}$ and $\vec{P}\in\vec{\mathbb{P}}$, we deduce from \eqref{2.39} and \eqref{2.41} that
\begin{eqnarray}\label{2.44}
% \nonumber to remove numbering (before each equation)
 \nonumber \widetilde{C}^{\varepsilon}_{Q_{\vec{P}},\vec{l}_{1},\vec{l}_{2},\vec{l}_{3}}&=&\int_{\mathbb{R}^{6}}
  \widetilde{m}^{\varepsilon}_{Q_{P'},Q_{P''}}((2^{k'}\xi^{1}_{1},2^{k''}\xi^{2}_{1}),(2^{k'}\xi^{1}_{2},2^{k''}\xi^{2}_{2}),
  (2^{k'}\xi^{1}_{3},2^{k''}\xi^{2}_{3}))\\
  && \quad\quad\quad\quad\quad\quad\quad\quad\quad\quad\quad\quad\quad\quad \times e^{-2\pi i(\vec{l}_{1}\cdot\xi_{1}+\vec{l}_{2}\cdot\xi_{2}+\vec{l}_{3}\cdot\xi_{3})}
  d\xi_{1}d\xi_{2}d\xi_{3},
\end{eqnarray}
where
\begin{eqnarray}\label{2.45}
% \nonumber to remove numbering (before each equation)
  && \widetilde{m}^{\varepsilon}_{Q_{P'},Q_{P''}}((2^{k'}\xi^{1}_{1},2^{k''}\xi^{2}_{1}),(2^{k'}\xi^{1}_{2},2^{k''}\xi^{2}_{2}),
  (2^{k'}\xi^{1}_{3},2^{k''}\xi^{2}_{3})):=\widetilde{m}^{\varepsilon}(2^{k'}\bar{\xi}_{1},2^{k''}\bar{\xi}_{2})\\
 \nonumber && \quad\quad\quad\quad\quad\quad\quad\quad\quad \times\phi_{\omega_{P'_{1}}\times\omega_{P'_{2}}}(2^{k'}\bar{\xi}_{1})
 \widetilde{\phi}_{\omega_{P'_{3}},3}(2^{k'}\xi^{1}_{3})\phi_{\omega_{P''_{1}}\times\omega_{P''_{2}}}(2^{k''}\bar{\xi}_{2})
 \widetilde{\phi}_{\omega_{P''_{3}},3}(2^{k''}\xi^{2}_{3}).
\end{eqnarray}
Observe that $supp \, (\phi_{\omega_{P'_{1}}\times\omega_{P'_{2}}}(\bar{\xi}_{1})
 \widetilde{\phi}_{\omega_{P'_{3}},3}(\xi^{1}_{3})\phi_{\omega_{P''_{1}}\times\omega_{P''_{2}}}(\bar{\xi}_{2})
 \widetilde{\phi}_{\omega_{P''_{3}},3}(\xi^{2}_{3}))\subseteq Q_{P'}\times Q_{P''}$, we have that $supp \, (\phi_{\omega_{P'_{1}}\times\omega_{P'_{2}}}(2^{k'}\bar{\xi}_{1})
 \widetilde{\phi}_{\omega_{P'_{3}},3}(2^{k'}\xi^{1}_{3})\phi_{\omega_{P''_{1}}\times\omega_{P''_{2}}}(2^{k''}\bar{\xi}_{2})
 \widetilde{\phi}_{\omega_{P''_{3}},3}(2^{k''}\xi^{2}_{3}))\subseteq Q^{0}_{P'}\times Q^{0}_{P''}$, where cubes $Q^{0}_{P'}$ and $Q^{0}_{P''}$ are defined by
\begin{equation}\label{2.46}
  Q^{0}_{P'}=\omega^{0}_{P'_{1}}\times\omega^{0}_{P'_{2}}\times\omega^{0}_{P'_{3}}
  :=\{(\xi^{1}_{1},\xi^{1}_{2},\xi^{1}_{3})\in\mathbb{R}^{3}: (2^{k'}\xi^{1}_{1},2^{k'}\xi^{1}_{2},2^{k'}\xi^{1}_{3})\in Q_{P'}\},
\end{equation}
\begin{equation}\label{2.47}
  Q^{0}_{P''}=\omega^{0}_{P''_{1}}\times\omega^{0}_{P''_{2}}\times\omega^{0}_{P''_{3}}
  :=\{(\xi^{2}_{1},\xi^{2}_{2},\xi^{2}_{3})\in\mathbb{R}^{3}: (2^{k''}\xi^{2}_{1},2^{k''}\xi^{2}_{2},2^{k''}\xi^{2}_{3})\in Q_{P''}\}
\end{equation}
and satisfy $|Q^{0}_{P'}|\simeq|Q^{0}_{P''}|\simeq1$. From the properties of the \emph{Whitney squares} we constructed above, one obtains that $dist(2^{k'}\bar{\xi}_{1},\Gamma_{1})\simeq2^{k'}$ for any $\bar{\xi}_{1}\in\omega^{0}_{P'_{1}}\times\omega^{0}_{P'_{2}}$ and $dist(2^{k''}\bar{\xi}_{2},\Gamma_{2})\simeq2^{k''}$ for any $\bar{\xi}_{2}\in\omega^{0}_{P''_{1}}\times\omega^{0}_{P''_{2}}$.

By taking advantage of the estimates \eqref{1.10'} for symbol $\widetilde{m}^{\varepsilon}(\bar{\xi})$, one can deduce from \eqref{2.44}, \eqref{2.45} and integrating by parts sufficiently many times that
\begin{eqnarray*}
% \nonumber to remove numbering (before each equation)
&& |\widetilde{C}^{\varepsilon}_{Q_{\vec{P}},\vec{l}_{1},\vec{l}_{2},\vec{l}_{3}}|\lesssim\prod_{j=1}^{3}\frac{1}{(1+|\vec{l}_{j}|)^{M}}\\
&& \times\int_{Q^{0}_{P'}\times Q^{0}_{P''}}|\partial^{\alpha_{1}}_{\xi_{1}}\partial^{\alpha_{2}}_{\xi_{2}}\partial^{\alpha_{3}}_{\xi_{3}}
[\widetilde{m}^{\varepsilon}_{Q_{P'},Q_{P''}}((2^{k'}\xi^{1}_{1},2^{k''}\xi^{2}_{1}),(2^{k'}\xi^{1}_{2},2^{k''}\xi^{2}_{2}),
(2^{k'}\xi^{1}_{3},2^{k''}\xi^{2}_{3}))]|d\xi_{1}d\xi_{2}d\xi_{3}\\
&\lesssim& \prod_{j=1}^{3}\frac{1}{(1+|\vec{l}_{j}|)^{M}}\int_{\omega^{0}_{P''_{1}}\times\omega^{0}_{P''_{2}}}dist(2^{k''}\bar{\xi}_{2},\Gamma_{2})^{|\alpha''|}\\
&& \quad\quad\quad\quad\quad\quad\quad\quad\quad\quad\quad\quad\times\int_{\omega^{0}_{P'_{1}}\times\omega^{0}_{P'_{2}}}dist(2^{k'}\bar{\xi}_{1},\Gamma_{1})^{|\alpha'|}
|\partial^{\alpha'}_{\bar{\xi}_{1}}\partial^{\alpha''}_{\bar{\xi}_{2}}\widetilde{m}^{\varepsilon}(2^{k'}\bar{\xi}_{1},2^{k''}\bar{\xi}_{2})|d\bar{\xi}_{1}d\bar{\xi}_{2}\\
&\lesssim& \prod_{j=1}^{3}\frac{1}{(1+|\vec{l}_{j}|)^{M}}\cdot2^{-2k'}2^{-2k''}\int_{\omega_{P''_{1}}\times\omega_{P''_{2}}}\int_{\omega_{P'_{1}}\times\omega_{P'_{2}}}\\
&& \quad\quad\quad\quad\quad\quad\quad\quad\quad\quad\quad\quad\quad\quad\, dist(\bar{\xi}_{2},\Gamma_{2})^{|\alpha''|}\cdot dist(\bar{\xi}_{1},\Gamma_{1})^{|\alpha'|}
|\partial^{\alpha'}_{\bar{\xi}_{1}}\partial^{\alpha''}_{\bar{\xi}_{2}}\widetilde{m}^{\varepsilon}(\bar{\xi}_{1},\bar{\xi}_{2})|d\bar{\xi}_{1}d\bar{\xi}_{2}\\
&\lesssim&\prod_{j=1}^{3}\frac{1}{(1+|\vec{l}_{j}|)^{M}}\cdot\langle\log_{2}\ell(Q_{P'})\rangle^{-(1+\varepsilon)},
\end{eqnarray*}
where the multi-indices $\alpha_{i}:=(\alpha^{1}_{i},\alpha^{2}_{i})$ for $i=1,2,3$ and $|\alpha_{1}|=|\alpha_{2}|=|\alpha_{3}|=M$ are sufficiently large, the multi-indices $\alpha':=(\alpha'_{1},\alpha'_{2},\alpha'_{3})$, $\alpha'':=(\alpha''_{1},\alpha''_{2},\alpha''_{3})$ with $\alpha'_{i}\leq\alpha^{1}_{i}$ and $\alpha''_{j}\leq\alpha^{2}_{j}$ for $i,j=1,2,3$. This ends our proof of estimates \eqref{FC-2}.
\end{proof}

Observe that the rapid decay with respect to the parameters $\vec{l}_{1},\vec{l}_{2},\vec{l}_{3}\in\mathbb{Z}^{2}$ in \eqref{FC-2} is acceptable for summation, all the functions $\Phi^{i,l'_{i},\lambda'}_{P'_{i}}$ ($i=1,2,3$) are $L^{2}$ normalized and are wave packets associated with the Heisenberg boxes $P'_{i}$ uniformly with respect to the parameters $l'_{i}$ and all the functions $\Phi^{j,l''_{j},\lambda''}_{P''_{j}}$ ($j=1,2,3$) are $L^{2}$ normalized and are wave packets associated with the Heisenberg boxes $P''_{j}$ uniformly with respect to the parameters $l''_{j}$, therefore we only need to consider from now on the part of the trilinear form $\Lambda^{(2)}_{\widetilde{m}^{\varepsilon}_{\mathbb{II},\mathbb{I}}}(f_{1},f_{2},f_{3})$ defined in \eqref{2.43} corresponding to $\vec{l}_{1}=\vec{l}_{2}=\vec{l}_{3}=\vec{0}$:
\begin{equation}\label{2.48}
  \dot{\Lambda}^{(2)}_{\widetilde{m}^{\varepsilon}_{\mathbb{II},\mathbb{I}}}(f_{1},f_{2},f_{3}):=\int_{0}^{1}\int_{0}^{1}\sum_{\vec{P}\in\vec{\mathbb{P}}}
 \frac{\widetilde{C}^{\varepsilon}_{Q_{\vec{P}}}}{|I_{\vec{P}}|^{\frac{1}{2}}}
 \langle f_{1},\Phi^{1,\lambda}_{\vec{P}_{1}}\rangle\langle f_{2},\Phi^{2,\lambda}_{\vec{P}_{2}}\rangle
 \langle f_{3},\Phi^{3,\lambda}_{\vec{P}_{3}}\rangle d\lambda,
\end{equation}
where $\widetilde{C}^{\varepsilon}_{Q_{\vec{P}}}:=\widetilde{C}^{\varepsilon}_{Q_{\vec{P}},\vec{0},\vec{0},\vec{0}}$, parameters $\lambda=(\lambda',\lambda'')$ and $\Phi^{i,\lambda}_{\vec{P}_{i}}:=\Phi^{i,\vec{0},\lambda}_{\vec{P}_{i}}$ for $i=1,2,3$.

The tri-tiles $P'=(P'_{1},P'_{2},P'_{3})$ in collection $\mathbb{P}'$ also satisfy the same properties (as $P''\in\mathbb{P}''$) described in Remark \ref{properties}. It is not difficult to observe that both the collections of tri-tiles $\mathbb{P}'$ and $\mathbb{P}''$ can be written as a finite union of sparse collections of rank $1$, thus we may assume further that $\mathbb{P}'$ and $\mathbb{P}''$ are sparse collection of rank $1$ from now on.

The bilinear operator corresponding to the trilinear form $\dot{\Lambda}^{(2)}_{\widetilde{m}^{\varepsilon}_{\mathbb{II},\mathbb{I}}}(f_{1},f_{2},f_{3})$ can be written as
\begin{equation}\label{2.49}
  \dot{\widetilde{\Pi}}^{\varepsilon}_{\vec{\mathbb{P}}}(f_{1},f_{2})(x)=\int_{0}^{1}\int_{0}^{1}\sum_{\vec{P}\in\vec{\mathbb{P}}}
 \frac{\widetilde{C}^{\varepsilon}_{Q_{\vec{P}}}}{|I_{\vec{P}}|^{\frac{1}{2}}}
 \langle f_{1},\Phi^{1,\lambda}_{\vec{P}_{1}}\rangle\langle f_{2},\Phi^{2,\lambda}_{\vec{P}_{2}}\rangle\Phi^{3,\lambda}_{\vec{P}_{3}}(x)d\lambda.
\end{equation}
Since $\dot{\widetilde{\Pi}}^{\varepsilon}_{\vec{\mathbb{P}}}(f_{1},f_{2})$ is an average of some discrete bilinear model operators depending on the parameters $\lambda=(\lambda_{1},\lambda_{2})\in[0,1]^{2}$, it is enough to prove the H\"{o}lder-type $L^{p}$ estimates for each of them, uniformly with respect to parameters $\lambda=(\lambda_{1},\lambda_{2})$. From now on, we will do this in the particular case when the parameters $\lambda=(\lambda_{1},\lambda_{2})=(0,0)$, but the same argument works in general. By Fatou's lemma, we can also restrict the summation in the definition \eqref{2.49} of $\dot{\widetilde{\Pi}}^{\varepsilon}_{\vec{\mathbb{P}}}(f_{1},f_{2})$ on collection $\vec{\mathbb{P}}=\mathbb{P}'\times\mathbb{P}''$ with arbitrary finite collections $\mathbb{P}'$ and $\mathbb{P}''$ of tri-tiles, and prove the estimates are unform with respect to different choices of the set $\vec{\mathbb{P}}$.

\begin{defn}
A finite collection $\vec{\mathbb{P}}=\mathbb{P}'\times\mathbb{P}''$ of bi-parameter tri-tiles is said to be sparse and rank $1$, if both the finite collections $\mathbb{P}'$ and $\mathbb{P}''$ are sparse and rank $1$.
\end{defn}

Therefore, one can reduce the bilinear operator $\dot{\widetilde{\Pi}}^{\varepsilon}_{\vec{\mathbb{P}}}$ further to the discrete bilinear model operator $\widetilde{\Pi}^{\varepsilon}_{\vec{\mathbb{P}}}$ defined by
\begin{equation}\label{model2}
  \widetilde{\Pi}^{\varepsilon}_{\vec{\mathbb{P}}}(f_{1},f_{2})(x):=\sum_{\vec{P}\in\vec{\mathbb{P}}}\frac{\widetilde{C}^{\varepsilon}_{Q_{\vec{P}}}}{|I_{\vec{P}}|^{\frac{1}{2}}}
  \langle f_{1},\Phi^{1}_{\vec{P}_{1}}\rangle\langle f_{2},\Phi^{2}_{\vec{P}_{2}}\rangle\Phi^{3}_{\vec{P}_{3}}(x),
\end{equation}
where $\Phi^{j}_{\vec{P}_{j}}:=\Phi^{j,(0,0)}_{\vec{P}_{j}}$ for $j=1,2,3$ respectively, the finite set $\vec{\mathbb{P}}=\mathbb{P}'\times\mathbb{P}''$ is an arbitrary sparse collection (of bi-parameter tri-tiles) of rank $1$. As have discussed above, we now reach a conclusion that the proof of Theorem \ref{main2} can be reduced to proving the following $L^{p}$ estimates for discrete bilinear model operators $\widetilde{\Pi}^{\varepsilon}_{\vec{\mathbb{P}}}$.

\begin{prop}\label{equivalent2}
If the finite set $\vec{\mathbb{P}}$ is an arbitrary sparse collection of rank $1$, then operator $\widetilde{\Pi}^{\varepsilon}_{\vec{\mathbb{P}}}$ given by \eqref{model2} maps $L^{p_{1}}(\mathbb{R}^{2})\times L^{p_{2}}(\mathbb{R}^{2})\rightarrow L^{p}(\mathbb{R}^{2})$ boundedly for any $1<p_{1},p_{2}\leq\infty$ satisfying $\frac{1}{p}=\frac{1}{p_{1}}+\frac{1}{p_{2}}$ and $\frac{2}{3}<p<\infty$. Moreover, the implicit constants in the bounds depend only on $\varepsilon$, $p_{1}$, $p_{2}$, $p$ and are independent of the particular finite sparse collection $\vec{\mathbb{P}}$ of rank $1$.
\end{prop}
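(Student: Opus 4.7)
The plan is to reduce Proposition \ref{equivalent2} to a restricted weak-type estimate on the associated tri-linear form
\begin{equation*}
\widetilde{\Lambda}^{\varepsilon}_{\vec{\mathbb{P}}}(f_1,f_2,f_3):=\sum_{\vec{P}\in\vec{\mathbb{P}}}\frac{\widetilde{C}^{\varepsilon}_{Q_{\vec{P}}}}{|I_{\vec{P}}|^{\frac{1}{2}}}\langle f_1,\Phi^{1}_{\vec{P}_1}\rangle\langle f_2,\Phi^{2}_{\vec{P}_2}\rangle\langle f_3,\Phi^{3}_{\vec{P}_3}\rangle,
\end{equation*}
and then to exploit the logarithmic decay of Lemma \ref{Fourier coefficient2} to sum over the $x_1$-frequency scales. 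By the standard multi-linear interpolation scheme of \cite{MTT1,MTT2}, it suffices to show that for any measurable sets $F_1,F_2,F_3\subseteq\mathbb{R}^2$ of finite measure there exists a major subset $F_3'\subseteq F_3$ with $|F_3'|\geq\tfrac{1}{2}|F_3|$ such that, whenever $|f_i|\leq\chi_{F_i}$ and $f_3'=f_3\chi_{F_3'}$,
\begin{equation*}
\bigl|\widetilde{\Lambda}^{\varepsilon}_{\vec{\mathbb{P}}}(f_1,f_2,f_3')\bigr|\lesssim_{\varepsilon,p_1,p_2,p}|F_1|^{\theta_1}|F_2|^{\theta_2}|F_3|^{\theta_3}
\end{equation*}
for every tuple $(\theta_1,\theta_2,\theta_3)$ in a suitable neighborhood of the target exponents. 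The major subset will be the complement of an exceptional set $U$ built from level sets of bi-parameter strong maximal functions of $\chi_{F_i}/|F_i|$.

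I would then partition the bi-parameter tri-tiles by the first-variable frequency scale: for $k\in\mathbb{Z}$ set
\begin{equation*}
\vec{\mathbb{P}}_{k}:=\bigl\{\vec{P}=P'\otimes P''\in\vec{\mathbb{P}}:\ell(Q_{P'})=2^{k}\bigr\},
\end{equation*}
so that $\widetilde{\Lambda}^{\varepsilon}_{\vec{\mathbb{P}}}=\sum_{k\in\mathbb{Z}}\widetilde{\Lambda}^{\varepsilon}_{\vec{\mathbb{P}}_{k}}$. By Lemma \ref{Fourier coefficient2}, every tri-tile in $\vec{\mathbb{P}}_{k}$ carries a uniform factor $|\widetilde{C}^{\varepsilon}_{Q_{\vec{P}}}|\lesssim\langle k\rangle^{-(1+\varepsilon)}$, and $\sum_{k\in\mathbb{Z}}\langle k\rangle^{-(1+\varepsilon)}<\infty$, so it is enough to produce the target bound on $|\widetilde{\Lambda}^{\varepsilon}_{\vec{\mathbb{P}}_{k}}(f_1,f_2,f_3')|$ with an extra factor $\langle k\rangle^{-(1+\varepsilon)}$ in front and an implicit constant uniform in $k$.

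For each fixed scale $k$, I would estimate $\widetilde{\Lambda}^{\varepsilon}_{\vec{\mathbb{P}}_{k}}$ by adapting the $L^2$ sizes and energies approach of \cite{MTT2}: organize $\mathbb{P}''$ into trees with a common top (viewing the inner sum over $P''$ as a discrete paraproduct in $x_2$), apply the generic trilinear size--energy inequality on each tree, and then sum over trees through the selection algorithm whose stopping times are governed by the exceptional set $U$. Within $\vec{\mathbb{P}}_{k}$ the $x_1$-frequency scale is fixed, so tri-tiles $P'\in\mathbb{P}'_{k}$ that share a common spatial interval differ only by the (lacunary) location of the frequency boxes $\omega_{P'_i}$, and by sparseness the corresponding wave packets $\Phi^{i}_{\vec{P}_i}$ form an almost-orthogonal system in $x_1$. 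I would use this almost orthogonality, combined with bi-parameter strong-maximal and square-function controls on $f_1$, $f_2$, $f_3'$, to bound the sum over $\mathbb{P}'_{k}$-fibers by Cauchy--Schwarz, avoiding any spatial stopping time in $x_1$ and yielding the scale-uniform restricted-weak-type bound.

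The main obstacle is precisely in this last step, where the scheme of \cite{MPTT1} breaks down: for every fixed dyadic interval $I_0$ of length $2^{-k}$ there are infinitely many $P'\in\mathbb{P}'_{k}$ with $I_{P'}=I_0$, so $\sum_{P'}|I_{P'}|$ diverges over such a slab and Journ\'e's lemma cannot be invoked. The resolution is to trade the missing $x_1$ spatial stopping time for an $L^2$-orthogonality argument: once the scale $k$ is frozen, the frequency intervals $\omega_{P'_i}$ of distinct $P'\in\mathbb{P}'_{k}$ sitting above the same spatial interval are pairwise disjoint and lacunary, so the wave packets in the first variable are essentially orthogonal, and the relevant $L^2$ sizes and energies of $f_1,f_2,f_3'$ can be dominated by bi-parameter maximal and square functions on $F_1,F_2,F_3$. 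The logarithmic decay $\langle k\rangle^{-(1+\varepsilon)}$ furnished by \eqref{1.10'} is precisely the extra ingredient that makes the outer sum in $k$ converge and, combined with the scale-uniform bounds, yields Proposition \ref{equivalent2}.
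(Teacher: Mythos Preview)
Your proposal is correct and follows essentially the same approach as the paper: reduce to restricted weak type, build the exceptional set from bi-parameter strong maximal functions, freeze the $x_1$-frequency scale $k$, apply the size--energy machinery of \cite{MTT2} to the $P''$-sum for each fixed $P'$, and then handle the $P'$-sum at fixed scale via the $L^2$ almost-orthogonality (Bessel-type inequality) of the $x_1$ wave packets, finally summing in $k$ using the $\langle k\rangle^{-(1+\varepsilon)}$ decay. The only points the paper makes explicit that your sketch leaves implicit are (i) the split into $I_{\vec P}\cap\Omega^c\neq\emptyset$ versus $I_{\vec P}\subseteq\Omega$, with a further decomposition of the latter by the dilation parameter $\mu$ measuring how deep $I_{\vec P}$ sits inside $\Omega$ (needed to extract decay for the $f_3$ size), and (ii) that the $P'$-sum is controlled directly by the $L^2$ energy bounds and the Bessel inequality rather than by square-function operators; both fit naturally into the scheme you outline.
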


\subsection{Multi-linear interpolations}
First, let's review the following terminologies and definitions of multi-linear interpolation arguments from \cite{MS,MTT1}.
\begin{defn}\label{tuples}(\cite{MS,MTT1})
An $n$-tuple $\beta=(\beta_{1},\cdots,\beta_{n})$ is said to be \emph{admissible} if and only if $\beta_{j}<1$ for every $1\leq j\leq n$, $\sum_{j=1}^{n}\beta_{j}=1$ and there is at most one index $j$ for which $\beta_{j}<0$. An index $j$ is called \emph{good} if $\beta_{j}\geq0$ and \emph{bad} if $\beta_{j}<0$. A \emph{good tuple} is an admissible tuple that contains only good indices; a \emph{bad tuple} is an admissible tuple that contains precisely one bad index.
\end{defn}
\begin{defn}\label{major}(\cite{MTT1})
Let $E$, $E'$ be sets of finite measure. We say that $E'$ is a \emph{major subset} of $E$ if $E'\subseteq E$ and $|E'|\geq\frac{1}{2}|E|$.
\end{defn}
\begin{defn}\label{RW tri-linear}(\cite{MS,MTT1})
If $\beta=(\beta_{1},\cdots,\beta_{n})$ is an admissible tuple, we say that an $n$-linear form $\Lambda$ is of \emph{restricted weak type} $\beta$ if and only if, for every sequence $E_{1},\cdots,E_{n}$ of measurable sets with positive and finite measure, there exists a major subset $E'_{j}$ of $E_{j}$ for each bad index $j$ (one or none) such that
\begin{equation}\label{2.50}
  |\Lambda(f_{1},\cdots,f_{n})|\lesssim|E_{1}|^{\beta_{1}}\cdots|E_{n}|^{\beta_{n}}
\end{equation}
for every measurable functions $|f_{i}|\leq\chi_{E'_{i}}$ ($i=1,\cdots,n$), where we adopt the convention $E'_{i}=E_{i}$ for good indices $i$. If $\beta$ is bad with bad index $j_{0}$, and it happens that one can choose the major subset $E'_{j_{0}}\subseteq E_{j_{0}}$ in a way that depends only on the measurable sets $E_{1},\cdots,E_{n}$ and not on $\beta$, we say that $\Lambda$ is of \emph{uniformly restricted weak type}.
\end{defn}
\begin{defn}\label{RW-type estimates}(\cite{MS})
Let $1<p_{1},p_{2}\leq\infty$ and $0<p<\infty$ be such that $\frac{1}{p}=\frac{1}{p_{1}}+\frac{1}{p_{2}}$. An arbitrary bilinear operator $T$ is said to be of the restricted weak type $(p_{1},p_{2},p)$ if and only if for all measurable sets $E_{1}$, $E_{2}$, $E$ of finite measure there exists $E'\subseteq E$ with $|E'|\simeq|E|$ such that
\begin{equation}\label{2.51}
  |\int_{\mathbb{R}^{d}}T(f_{1},f_{2})(x)f(x)dx|\lesssim|E_{1}|^{\frac{1}{p_{1}}}|E_{2}|^{\frac{1}{p_{2}}}|E'|^{\frac{1}{p'}}
\end{equation}
for every $|f_{1}|\leq\chi_{E_{1}}$, $|f_{2}|\leq\chi_{E_{2}}$ and $|f|\leq\chi_{E'}$.
\end{defn}

By using multi-linear interpolation (see \cite{GrTa,Janson,MS,MTT1}) and the symmetry of operators $\Pi^{\varepsilon}_{\vec{\mathbb{P}}}$ and $\widetilde{\Pi}^{\varepsilon}_{\vec{\mathbb{P}}}$, we can reduce further the proof of Proposition \ref{equivalent1} and Proposition \ref{equivalent2} to proving the following restricted weak type estimates for the model operators $\Pi^{\varepsilon}_{\vec{\mathbb{P}}}$ and $\widetilde{\Pi}^{\varepsilon}_{\vec{\mathbb{P}}}$.
\begin{prop}\label{RW-type equivalent}
Let $p_{1}$ and $p_{2}$ be such that $p_{1}$ is strictly larger than $1$ and arbitrarily close to $1$ and $p_{2}$ is strictly smaller than $2$ and arbitrarily close to $2$ and such that for $\frac{1}{p}:=\frac{1}{p_{1}}+\frac{1}{p_{2}}$, one has $\frac{2}{3}<p<1$. Then both the model operators $\Pi^{\varepsilon}_{\vec{\mathbb{P}}}$ and $\widetilde{\Pi}^{\varepsilon}_{\vec{\mathbb{P}}}$ defined in \eqref{model1} and \eqref{model2} are of the restricted weak type $(p_{1},p_{2},p)$. Moreover, the implicit constants in the bounds depend only on $\varepsilon$, $p_{1}$, $p_{2}$, $p$ and are independent of the particular choice of the finite collection $\vec{\mathbb{P}}$.
\end{prop}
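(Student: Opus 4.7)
The strategy is the standard reduction via dualization and multi-linear interpolation. By symmetry and the interpolation of admissible tuples, it suffices to prove the restricted weak-type bound with bad index $3$. Given sets $E_1,E_2,E_3$ of finite measure, rescale so that $|E_1|=|E_2|=|E_3|=1$ and form the exceptional set
\begin{equation*}
  U := \set{MM\chi_{E_1} > C} \cup \set{MM\chi_{E_2} > C} \cup \set{MS\chi_{E_1} > C} \cup \set{SM\chi_{E_2} > C}\cup \set{SS\chi_{E_1} > C},
\end{equation*}
where $MM,MS,SM,SS$ are the hybrid bi-parameter maximal/square functions. Choosing $C$ large enough so that $|U|\le 1/2$ and setting $E'_3 := E_3 \setminus U$ yields a major subset. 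With $|f_j|\le\chi_{E_j}$ for $j=1,2$ and $|f_3|\le\chi_{E'_3}$ the goal is the bound
\begin{equation*}
  \abs{\Lambda^{\varepsilon}_{\vec{\mathbb{P}}}(f_1,f_2,f_3)} \lesssim |E_1|^{\frac{1}{p_1}} |E_2|^{\frac{1}{p_2}} |E'_3|^{\frac{1}{p'}},
\end{equation*}
and analogously for $\widetilde{\Lambda}^{\varepsilon}_{\vec{\mathbb{P}}}$.

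For $\Pi^{\varepsilon}_{\vec{\mathbb{P}}}$, I would group the tri-tiles $\vec{P}=\widetilde{P'}\otimes P''$ by the scale $\ell(\omega_{\widetilde{P'}})=2^{k'}$ and treat each scale separately. Since $\Gamma_1=\set{0}$ gives the first parameter a Coifman--Meyer (paraproduct) structure, I would invoke the generic decomposition lemma (Lemma \ref{generic decomposition}) in the $x_1$ direction to write each $\widetilde{P'}$-piece so that the relevant wave packets $\Phi^{3}_{\widetilde{P'_3}}$ have essential support inside $U$ in $x_1$. The remaining $\mathbb{P}''$ component is a sparse rank-$1$ family of BHT-type tri-tiles; decomposing it into trees and invoking the $L^2$ sizes/energies estimates recalled in Section 3 (together with the $MM,MS,SM,SS$ controls provided by $U^c$) produces a uniform-in-$k'$ bound with prefactor $C^{\varepsilon}_{k'}$ coming from Lemma \ref{Fourier coefficient1}. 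Summing in $k'\in\mathbb{Z}$ via $\sum_{k'}C^{\varepsilon}_{k'}\le C_{\varepsilon}<\infty$ then closes the estimate.

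For $\widetilde{\Pi}^{\varepsilon}_{\vec{\mathbb{P}}}$ both $\Gamma_1$ and $\Gamma_2$ are one-dimensional, so the generic decomposition destroys the bi-parameter orthogonality of wave packets and Journ\'e's lemma is unavailable (the collection $\mathbb{P}'$ has infinitely many tri-tiles sharing any fixed $I_{P'}=I_0$, so one cannot control $\sum|I_{P'}|$ by $|\widetilde{I}|$). Instead, I would freeze the first-parameter scale $\ell(Q_{P'})=2^{k'}$ and iterate the one-parameter BHT machinery twice: first decompose $\mathbb{P}'$ into BHT-trees by $L^2$ sizes and energies measured against $f_1,f_2,f_3$, and for each such $\mathbb{P}'$-tree decompose the corresponding $\mathbb{P}''$-slice into BHT-trees. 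The $L^2$-tree estimates combined with the almost-orthogonality of wave packets attached to \emph{distinct} $\mathbb{P}'$-trees at the same scale (rather than the spatial disjointness that fails here) yield a uniform-in-$k'$ tri-linear bound, which when weighted by $\langle k'\rangle^{-(1+\varepsilon)}$ from Lemma \ref{Fourier coefficient2} becomes summable in $k'$.

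The main obstacle is clearly the $\widetilde{\Pi}^{\varepsilon}$ case. One must carry out the scale-frozen BHT tree estimate on $\mathbb{P}'\times\mathbb{P}''$ \emph{without} any spatial-packing hypothesis on $\mathbb{P}'$, so the usual passage from tree-wise $L^2$ bounds to the global estimate cannot rely on $\sum|I_{P'}|\lesssim|\widetilde{I}|$; the only admissible substitute is the near-orthogonality of wave packets in different trees combined with the decay factor $\langle k'\rangle^{-(1+\varepsilon)}$, whose exponent is precisely what is needed to defeat the logarithmic blow-up exhibited by the $BHT\otimes BHT$ counterexample \eqref{1.6}. Producing a clean tri-linear tree estimate that is uniform in $k'$ and whose constant is summable against $\langle k'\rangle^{-(1+\varepsilon)}$ is the technical heart of the argument; once it is in place the restricted weak-type bound for both $\Pi^{\varepsilon}_{\vec{\mathbb{P}}}$ and $\widetilde{\Pi}^{\varepsilon}_{\vec{\mathbb{P}}}$ follows by the standard multi-linear interpolation of Muscalu--Tao--Thiele.
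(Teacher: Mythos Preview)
Your outline captures the overall architecture --- freeze the first-parameter scale $k'$, run the one-parameter size/energy machinery on the $\mathbb{P}''$ slice, and sum in $k'$ using the decay $C^{\varepsilon}_{k'}$ or $\langle k'\rangle^{-(1+\varepsilon)}$ --- and that is indeed what the paper does. But two aspects of your proposal diverge from the paper and one of them matters.

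First, your exceptional set uses the hybrid operators $MS,SM,SS$. The paper deliberately does \emph{not}: as explained in the introduction, the extra frequency freedom in the $x_2$ direction means there are infinitely many paraproducts at each spatial location, so the stopping-time arguments based on $MS,SM,SS$ from \cite{MPTT1} are unavailable here. The exceptional set in Sections~4 and~5 is built from $MM$ alone (with, for $\Pi^{\varepsilon}_{\vec{\mathbb{P}}}$, an $\ell$-indexed family $\widetilde{\Omega}_{-10\ell}$ tied to the generic decomposition). Including $MS,SM,SS$ is harmless for the measure bound on $U$, but you never get to use those controls in the estimates, so they are a red herring rather than an ingredient. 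Relatedly, you cannot rescale to $|E_1|=|E_2|=|E_3|=1$: bi-parameter dilation gives only two degrees of freedom, and the paper normalizes only $|E_3|=1$.

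Second, and more substantively, for $\widetilde{\Pi}^{\varepsilon}_{\vec{\mathbb{P}}}$ you propose to ``iterate the BHT machinery twice'' by first decomposing $\mathbb{P}'$ into trees. The paper's resolution of the crux you identify is considerably simpler: once the scale $k'$ is frozen, every $P'\in\mathbb{P}'_{k'}$ has $|I_{P'}|=2^{-k'}$, so there is no tree structure to exploit in $\mathbb{P}'_{k'}$ at all. Instead the paper fixes a single $P'$, applies Proposition~\ref{tile norm estimates} to the $\mathbb{P}''_{P'}$ sum, bounds the energies by $|I_{P'}|^{-1/2}\|\langle f_j,\Phi^{j}_{P'_j}\rangle\|_{L^2_{x_2}}$, and then sums over $P'\in\mathbb{P}'_{k'}$ using an elementary single-scale Bessel inequality \eqref{Bessel-1}--\eqref{Bessel-2}: distinct $P'$ at the same scale have either disjoint $I_{P'}$ or disjoint $\omega_{P'_j}$, which is exactly the almost-orthogonality you allude to, but it is used directly rather than through a tree decomposition. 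The remaining case split (whether $I_{\vec{P}}$ meets $\Omega^c$ or not) is handled by a $\mu$-indexed dilation argument to extract decay from the support condition on $f_3$. So your instinct about almost-orthogonality is right, but the implementation is a straight Bessel bound, not a second layer of BHT trees.
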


Indeed, first we should observe that if $p_{1}$, $p_{2}$, $p$ are as in Proposition \ref{equivalent1} and \ref{equivalent2} then the $3$-tuple $(\frac{1}{p_{1}},\frac{1}{p_{2}},\frac{1}{p'})$ lies in the interior of the convex hull of the following six extremal points: $\beta^{1}:=(-\frac{1}{2},\frac{1}{2},1)$, $\beta^{2}:=(-\frac{1}{2},1,\frac{1}{2})$, $\beta^{3}:=(\frac{1}{2},-\frac{1}{2},1)$, $\beta^{4}:=(1,-\frac{1}{2},\frac{1}{2})$, $\beta^{5}:=(\frac{1}{2},1,-\frac{1}{2})$ and $\beta^{6}:=(1,\frac{1}{2},-\frac{1}{2})$. Then, if we assume that Proposition \ref{RW-type equivalent} has been proved, from the symmetry of operators $\Pi^{\varepsilon}_{\vec{\mathbb{P}}}$ and $\widetilde{\Pi}^{\varepsilon}_{\vec{\mathbb{P}}}$ and their adjoints, we deduce that both the tri-linear forms associated to bilinear operators $\Pi^{\varepsilon}_{\vec{\mathbb{P}}}$ and $\widetilde{\Pi}^{\varepsilon}_{\vec{\mathbb{P}}}$ are of \emph{uniformly restricted weak type} $\beta$ for $3$-tuples $\beta=(\beta_{1},\beta_{2},\beta_{3})$ arbitrarily close to the six extremal points $\beta^{1},\cdots,\beta^{6}$ inside the convex hull of them and satisfying if $\beta_{j}$ is close to $\frac{1}{2}$ for some $j=1,2,3$ then $\beta_{j}$ is strictly larger than $\frac{1}{2}$. By using multi-linear interpolation lemma 9.4 and 9.6 in \cite{MS} or lemma 3.8 in \cite{MTT1}, we first obtain restricted weak type estimates of $\Lambda$ for good tuples inside the smaller convex hull of the three coordinate points $(1,0,0)$, $(0,1,0)$ and $(0,0,1)$. After that, we use the interpolation lemma 9.5 in \cite{MS} or lemma 3.10 in \cite{MTT1} to obtain restricted weak type estimates of $\Lambda$ for bad tuples and finally conclude that restricted weak type estimates of $\Lambda$ hold for all tuples $\beta$ inside the convex hull of the six extremal points $\beta^{1},\cdots,\beta^{6}$.

It only remains to convert these restricted weak type estimates into strong type estimates. To do this, one just has to apply (exactly as in \cite{MTT1}) the multi-linear Marcinkiewicz interpolation theorem in \cite{Janson} in the case of good tuples and the interpolation lemma 3.11 in \cite{MTT1} in the case of bad tuples. This ends the proof of Proposition \ref{equivalent1} and \ref{equivalent2}, and as a consequence, completes the proof of our main results, Theorem \ref{main1} and \ref{main2}. Therefore, we only have the task of proving Proposition \ref{RW-type equivalent} from now on.

\section{Trees, $L^{2}$ sizes and $L^{2}$ energies}

\subsection{Trees}
We should recall that for discrete bilinear paraproducts, the frequency intervals have been already organized with the lacunary properties (see \cite{MS,MPTT1,MPTT2}), we could use square function and Maximal function estimates to handle the corresponding terms easily, at least in the Banach case. By the properties of the collection $\mathbb{P}''$ of tri-tiles we have explained in Remark \ref{properties}, we can organize our collections of tri-tiles $\mathbb{P}'$, $\mathbb{P''}$ into trees as in \cite{GL}, which satisfy lacunary properties about certain frequency. We review the following standard definitions and properties for trees from \cite{MTT2}.
\begin{defn}\label{trees}
Let $\mathbb{P}$ be a sparse rank-$1$ collection of tri-tiles and $j\in\{1,2,3\}$. A sub-collection $T\subseteq\mathbb{P}$ is called a $j$-\emph{tree} if and only if there exists a tri-tile $P_{T}$ (called the top of the tree) such that
\begin{equation}\label{3.1}
  P_{j}\leq P_{T,j}
\end{equation}
for every $P\in T$.
\end{defn}
\begin{rem}
Note that a tree does not necessarily have to contain the corresponding top $P_{T}$. From now on, we will write $I_{T}$ and $\omega_{T,j}$ for $I_{P_{T}}$ and $\omega_{P_{T},j}$ for $j=1,2,3$ respectively. Then, we simply say that $T$ is a tree if it is a $j$-tree for some $j=1,2,3$.
\end{rem}

For every given dyadic interval $I_{0}$, there are potentially many tri-tiles $P$ in collections $\mathbb{P}'$ and $\mathbb{P}''$ with the property that $I_{P}=I_{0}$. Due to this \emph{extra degree of freedom} in frequency, we have infinitely many trees in our collections $\mathbb{P}'$ and $\mathbb{P}''$. We need to estimate each of these trees separately, and then add all these estimates together, by using the \emph{almost orthogonality} conditions for distinct trees. This motivates the following definition.

\begin{defn}\label{chain of disjoint trees}
Let $1\leq i\leq3$. A finite sequence of trees $T_{1},\cdots,T_{M}$ is said to be a \emph{chain of strongly} $i$-\emph{disjoint trees} if and only if \\
(i) $P_{i}\neq P'_{i}$ for every $P\in T_{l_{1}}$ and $P'\in T_{l_{2}}$ with $l_{1}\neq l_{2}$;\\
(ii) whenever $P\in T_{l_{1}}$ and $P'\in T_{l_{2}}$ with $l_{1}\neq l_{2}$ are such that $2\omega_{P_{i}}\cap2\omega_{P'_{i}}\neq\emptyset$, then if $|\omega_{P_{i}}|<|\omega_{P'_{i}}|$ one has $I_{P'}\cap I_{T_{l_{1}}}=\emptyset$ and if $|\omega_{P'_{i}}|<|\omega_{P_{i}}|$ one has $I_{P}\cap I_{T_{l_{2}}}=\emptyset$;\\
(iii) whenever $P\in T_{l_{1}}$ and $P'\in T_{l_{2}}$ with $l_{1}<l_{2}$ are such that $2\omega_{P_{i}}\cap2\omega_{P'_{i}}\neq\emptyset$, then if $|\omega_{P_{i}}|=|\omega_{P'_{i}}|$ one has $I_{P'}\cap I_{T_{l_{1}}}=\emptyset$.
\end{defn}

\subsection{$L^{2}$ sizes and $L^{2}$ energies}
Following \cite{MTT2}, we give the definitions of standard norms on sequences of tiles as follows.
\begin{defn}\label{sizes and energies}
Let $\mathbb{P}$ be a finite collection of tri-tiles, $j\in\{1,2,3\}$, and $f$ be an arbitrary function. We define the \emph{size} of the sequence $(\langle f,\Phi^{j}_{P_{j}}\rangle)_{P\in\mathbb{P}}$ by
\begin{equation}\label{3.2}
  size_{j}((\langle f,\Phi^{j}_{P_{j}}\rangle)_{P\in\mathbb{P}}):=\sup_{T\subseteq\mathbb{P}}(\frac{1}{|I_{T}|}\sum_{P\in T}|\langle f,\Phi^{j}_{P_{j}}\rangle|^{2})^{\frac{1}{2}},
\end{equation}
where $T$ ranges over all trees in $\mathbb{P}$ that are $i$-trees for some $i\neq j$. For $j=1,2,3$, we define the \emph{energy} of the sequence $(\langle f,\Phi^{j}_{P_{j}}\rangle)_{P\in\mathbb{P}}$ by
\begin{equation}\label{3.3}
  energy_{j}((\langle f,\Phi^{j}_{P_{j}}\rangle)_{P\in\mathbb{P}}):=\sup_{n\in\mathbb{Z}}\sup_{\mathbb{T}}2^{n}(\sum_{T\in\mathbb{T}}|I_{T}|)^{\frac{1}{2}},
\end{equation}
where now $\mathbb{T}$ ranges over all chains of strongly $j$-disjoint trees in $\mathbb{P}$ (which are $i$-trees for some $i\neq j$) having the property that
\begin{equation}\label{3.4}
  (\sum_{P\in T}|\langle f,\Phi^{j}_{P_{j}}\rangle|^{2})^{\frac{1}{2}}\geq2^{n}|I_{T}|^{\frac{1}{2}}
\end{equation}
for all $T\in\mathbb{T}$ and such that
\begin{equation}\label{3.5}
  (\sum_{P\in T'}|\langle f,\Phi^{j}_{P_{j}}\rangle|^{2})^{\frac{1}{2}}\leq2^{n+1}|I_{T'}|^{\frac{1}{2}}
\end{equation}
for all subtrees $T'\subseteq T\in\mathbb{T}$.
\end{defn}

The \emph{size} measures the extent to which the sequences $(\langle f,\Phi^{j}_{P_{j}}\rangle)_{P\in\mathbb{P}}$ ($j=1,2,3$) can concentrate on a single tree and should be thought of as a phase-space variant of the \emph{BMO} norm. The \emph{energy} is a phase-space variant of the $L^{2}$ norm. As the notation suggests, the number $\langle f,\Phi^{j}_{P_{j}}\rangle$ should be thought of as being associated with the tile $P_{j}$ ($j=1,2,3$) rather than the full tri-tile $P$.

Let $\mathbb{P}$ be a finite collection of tri-tiles. Denote by $\Pi_{\mathbb{P}}$ the discrete bilinear operator given by
\begin{equation*}
  \Pi_{\mathbb{P}}(f_{1},f_{2})(x)=\sum_{P\in\mathbb{P}}\frac{1}{|I_{P}|^{\frac{1}{2}}}\langle f_{1},\Phi^{1}_{P_{1}}\rangle\langle f_{2},\Phi^{2}_{P_{2}}\rangle
  \Phi^{3}_{P_{3}}(x).
\end{equation*}
The following proposition provides a way of estimating the trilinear form associated with bilinear operator $\Pi_{\mathbb{P}}(f_{1},f_{2})$. We define
\begin{equation*}
  \Lambda_{\mathbb{P}}(f_{1},f_{2},f_{3}):=\int_{\mathbb{R}}T_{\mathbb{P}}(f_{1},f_{2})(x)f_{3}(x)dx.
\end{equation*}
\begin{prop}\label{tile norm estimates}(\cite{MTT2})
Let $\mathbb{P}$ be a finite collection of tri-tiles. Then
\begin{equation}\label{3.6}
  |\Lambda_{\mathbb{P}}(f_{1},f_{2},f_{3})|\lesssim\prod_{j=1}^{3}(size_{j}((\langle f_{j},\Phi^{j}_{P_{j}}\rangle)_{P\in\mathbb{P}}))^{\theta_{j}}(energy_{j}((\langle f_{j},\Phi^{j}_{P_{j}}\rangle)_{P\in\mathbb{P}}))^{1-\theta_{j}}
\end{equation}
for any $0\leq\theta_{1},\theta_{2},\theta_{3}<1$ with $\theta_{1}+\theta_{2}+\theta_{3}=1$; the implicit constants depend on the $\theta_{j}$ but are independent of the other parameters.
\end{prop}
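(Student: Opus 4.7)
Since Proposition \ref{tile norm estimates} is the standard tile-norm estimate quoted from \cite{MTT2}, I would follow the Muscalu--Tao--Thiele size/energy framework. The first observation is that \eqref{3.6} is scale-invariant in each $f_j$ (because $\theta_j+(1-\theta_j)=1$), so I may rescale each input to assume $\text{size}_j=\text{energy}_j=1$ for every $j$, reducing the task to the scale-free claim $|\Lambda_{\mathbb{P}}(f_1,f_2,f_3)|\lesssim 1$.

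Next I would perform, for each $j\in\{1,2,3\}$, a size-energy stopping-time decomposition. At each level $n_j\ge 0$ I greedily extract a maximal chain $\mathbb{T}^{(j)}_{n_j}$ of strongly $j$-disjoint $i$-trees ($i\neq j$) on which the $j$-slot $L^2$ density lies in $(2^{-n_j-1},2^{-n_j}]$; the definition of $\text{energy}_j$ together with the stopping rule then yields $\sum_{T\in\mathbb{T}^{(j)}_{n_j}}|I_T|\lesssim 2^{2n_j}$. Letting $\mathbb{P}^{(j)}_{n_j}:=\bigcup_{T\in\mathbb{T}^{(j)}_{n_j}}T$, these pieces exhaust $\mathbb{P}$, and intersecting the three decompositions produces $\mathbb{P}=\bigsqcup_{(n_1,n_2,n_3)}\mathbb{P}_{n_1,n_2,n_3}$ with $\mathbb{P}_{n_1,n_2,n_3}:=\bigcap_j\mathbb{P}^{(j)}_{n_j}$. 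On each piece all three sizes are $\lesssim 2^{-n_j}$ and a tree cover exists with total shadow $\lesssim\min_j 2^{2n_j}$.

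For each single tree $T$, the standard tree estimate of \cite{MTT2}, obtained by Cauchy--Schwarz in the two lacunary slots and a John--Nirenberg/square-function bound in the overlapping one, gives $|\Lambda_T(f_1,f_2,f_3)|\lesssim|I_T|\prod_j\text{size}_j(T)$. Summing over trees in $\mathbb{P}_{n_1,n_2,n_3}$ yields the per-piece bound $\bigl(\min_j 2^{2n_j}\bigr)\prod_j 2^{-n_j}$. To extract the stated exponents I would use the elementary inequality $\min_j 2^{2n_j}\le\prod_j 2^{2n_j\alpha_j}$ valid for any weights $\alpha_j\ge 0$ with $\sum_j\alpha_j=1$, and set $\alpha_j:=(1-\theta_j)/2$. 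This choice is admissible exactly because $\theta_j\in[0,1)$ and $\sum_j\theta_j=1$; it collapses the per-piece bound to $\prod_j 2^{-n_j\theta_j}$. When all $\theta_j>0$ the geometric series $\sum_{n_j\ge 0}2^{-n_j\theta_j}$ converge and factor, producing $|\Lambda_\mathbb{P}|\lesssim 1$ and hence \eqref{3.6} after undoing the normalization. Boundary cases in which some $\theta_j=0$ are handled either by omitting the stratification in that index and appealing to the $\text{energy}_j$ bound directly, or by multi-linear interpolation from the interior of the simplex.

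The chief technical obstacle is the combinatorial content of the second step: organizing $\mathbb{P}^{(j)}_{n_j}$ as a chain of \emph{strongly} $j$-disjoint $i$-trees on which the energy-based shadow estimate $\sum|I_T|\lesssim 2^{2n_j}$ is actually valid. This greedy selection argument, the verification of the rank-$1$ hypotheses along the way, and the control of the boundary case $\theta_j=0$ are the substantive parts of the proof; once they are in place the remaining summation is routine.
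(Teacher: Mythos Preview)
The paper does not supply its own proof of this proposition; it is quoted verbatim from \cite{MTT2} and used as a black box in Sections~4 and~5. Your sketch is precisely the size/energy stopping-time argument of \cite{MTT2}, so there is nothing to compare against beyond the cited source itself.

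One small slip worth fixing: you cannot normalize both $\text{size}_j$ and $\text{energy}_j$ to $1$ simultaneously by rescaling $f_j$, since both quantities are homogeneous of degree~$1$ in $f_j$ and hence their ratio is invariant. The standard repair is to normalize only $\text{energy}_j=1$, let $S_j:=\text{size}_j$, and start the $j$-th stopping-time decomposition at the level $n_j^{(0)}$ with $2^{-n_j^{(0)}}\simeq S_j$ (rather than at $n_j=0$). The geometric sum $\sum_{n_j\ge n_j^{(0)}}2^{-n_j\theta_j}\simeq S_j^{\theta_j}$ then produces exactly the missing $\text{size}_j^{\theta_j}$ factor. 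With that adjustment your outline matches the \cite{MTT2} proof.
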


\subsection{Estimates for sizes and energies}
In order to apply Proposition \ref{tile norm estimates}, we need to estimate further the sizes and energies appearing on the right-hand side of \eqref{3.6}.
\begin{lem}\label{size estimates}(\cite{MS,MTT2})
Let $j\in\{1,2,3\}$ and $f\in L^{2}(\mathbb{R})$. Then one has
\begin{equation}\label{3.7}
  size_{j}((\langle f,\Phi^{j}_{P_{j}}\rangle)_{P\in\mathbb{P}})\lesssim\sup_{P\in\mathbb{P}}\frac{1}{|I_{P}|}\int_{\mathbb{R}}|f|\widetilde{\chi}^{M}_{I_{P}}dx
\end{equation}
for every $M>0$, where the approximate cutoff function $\widetilde{\chi}^{M}_{I_{P}}(x):=(1+\frac{dist(x,I_{P})}{|I_{P}|})^{-M}$ and the implicit constants depend on $M$.
\end{lem}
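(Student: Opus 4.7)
The plan is to unfold the definition of $\text{size}_j$ and reduce to bounding, for each fixed tree $T \subseteq \mathbb{P}$ which by definition is an $i$-tree with $i \neq j$, the quantity
$$\Sigma_T := \frac{1}{|I_T|^{1/2}}\Bigl(\sum_{P \in T}|\langle f, \Phi^j_{P_j}\rangle|^2\Bigr)^{1/2}$$
by a constant multiple of $\sup_{P \in \mathbb{P}}|I_P|^{-1}\int_{\mathbb{R}}|f|\widetilde{\chi}^M_{I_P}\,dx$. The rank-$1$ hypothesis together with the fact that $T$ is an $i$-tree (with $i \neq j$) forces the frequency intervals $\{\omega_{P_j}\}_{P\in T}$ to be pairwise disjoint and lacunary about $\omega_{T,j}$, so the wave packets $\Phi^j_{P_j}$ have essentially disjoint Fourier supports; meanwhile the spatial intervals satisfy $I_P\subseteq I_T$.

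Next I would invoke $\ell^2$-duality to produce coefficients $(a_P)_{P \in T}$ with $\sum_P |a_P|^2 = 1$ such that
$$\Bigl(\sum_{P \in T}|\langle f,\Phi^j_{P_j}\rangle|^2\Bigr)^{1/2} = \bigl|\langle f,\, G\rangle\bigr|,\qquad G := \sum_{P \in T}\overline{a_P}\,\Phi^j_{P_j}.$$
By the disjointness of the Fourier supports, Bessel's inequality gives $\|G\|_{L^2(\mathbb{R})}\lesssim 1$. The heart of the proof is to upgrade this global $L^2$-control to a spatially localized pointwise envelope of the form
$$|G(x)|\;\lesssim\; |I_T|^{-1/2}\,\widetilde{\chi}^{M}_{I_T}(x),$$
uniformly in $(a_P)$ with $\sum|a_P|^2\leq 1$. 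This I would prove by grouping the tiles $P\in T$ according to spatial scale $|I_P|\leq|I_T|$ and running a Cotlar--Schur / $TT^*$ argument on each scale, exploiting the bound $|\Phi^j_{P_j}(x)|\lesssim|I_P|^{-1/2}\widetilde{\chi}^{M'}_{I_P}(x)$ and summing scales via the lacunarity of $\{\omega_{P_j}\}$.

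Granted such an envelope for $G$, the lemma follows immediately:
$$\Sigma_T \;\leq\; \frac{1}{|I_T|^{1/2}}\int|f||G|\,dx \;\lesssim\; \frac{1}{|I_T|}\int|f|\,\widetilde{\chi}^{M}_{I_T}(x)\,dx,$$
which is majorized by the right-hand side of \eqref{3.7} upon selecting any tile $P\in\mathbb{P}$ whose spatial interval equals (or contains) $I_T$. Taking the supremum over trees $T$ completes the reduction.

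The hard part is the pointwise localization estimate on $G$: Bessel alone controls $\|G\|_2$ but does not by itself localize $G$ to $I_T$, and naive triangle-inequality bounds over $P\in T$ lose the cancellation coming from the disjoint frequency supports. The proper spatial localization relies on the nesting $I_P\subseteq I_T$, the lacunarity of $\{\omega_{P_j}\}_{P\in T}$, and the rapid off-diagonal decay of $\langle\Phi^j_{P_j},\Phi^j_{P'_j}\rangle$ for tiles at different scales. This bookkeeping is the standard one in time-frequency analysis (see \cite{MTT2,MS}), and it is what the lemma is invoking; I would simply cite it rather than redo the full calculation.
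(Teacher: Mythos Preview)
The paper does not supply its own proof of this lemma; it simply cites \cite{MS,MTT2}. Your proposal, however, contains a genuine gap.

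The central claim---that $G:=\sum_{P\in T}\overline{a_P}\,\Phi^j_{P_j}$ obeys the pointwise envelope $|G(x)|\lesssim|I_T|^{-1/2}\widetilde{\chi}^M_{I_T}(x)$ uniformly over $\|a\|_{\ell^2}\leq1$---is false. Taking the supremum over such $(a_P)$ at a \emph{fixed} point $x$ gives exactly $\bigl(\sum_{P\in T}|\Phi^j_{P_j}(x)|^2\bigr)^{1/2}$; once arbitrary $\ell^2$ coefficients are allowed, no cancellation among the wave packets survives. For $x\in I_T$ and a tree containing tiles at many scales, each scale contributes a summand $\sim|I_P|^{-1}$ from the (essentially unique) tile at that scale containing $x$, so the square root is $\gtrsim|I_{P_{\min}}|^{-1/2}$ for the smallest tile present---in general far larger than $|I_T|^{-1/2}$. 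Even a single tile $P$ with $|I_P|\ll|I_T|$ already gives $|G|\sim|I_P|^{-1/2}$ near $I_P$, defeating the claimed bound. The Cotlar--Schur/$TT^*$ mechanism you invoke controls $\|G\|_{L^2}$, not pointwise values; disjoint Fourier supports yield $L^2$-orthogonality, not pointwise smallness.

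The standard argument in \cite{MTT2,MS} does not go through a pointwise envelope. After modulating by the top frequency, the tree becomes a family of mean-zero bumps indexed by dyadic subintervals of $I_T$, and the $L^2$-size is a local Carleson/BMO-type square function. One then runs a John--Nirenberg/stopping-time argument (together with the rapid decay of the wave packets to handle the portion of $f$ supported far from $I_T$) to dominate the $L^2$-based size by the $L^1$ maximal average on the right of \eqref{3.7}. A secondary issue: the top $P_T$ need not belong to $\mathbb{P}$ (see the remark after Definition~\ref{trees}), so ``selecting a tile with $I_P=I_T$'' is not always available; the cited proofs organize the estimate so the supremum naturally runs over $P\in T\subseteq\mathbb{P}$.
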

\begin{lem}\label{energy estimates}(Bessel-type estimates, \cite{MTT2}).
Let $j\in\{1,2,3\}$ and $f\in L^{2}(\mathbb{R})$. Then
\begin{equation}\label{3.8}
  energy_{j}((\langle f,\Phi^{j}_{P_{j}}\rangle)_{P\in\mathbb{P}})\lesssim\|f\|_{L^{2}}.
\end{equation}
\end{lem}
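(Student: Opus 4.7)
The plan is to reduce the bound on $\text{energy}_j$ to a Bessel-type coefficient estimate. Fix $n\in\mathbb{Z}$ and a chain of strongly $j$-disjoint trees $\mathbb{T}$ that realizes the supremum in \eqref{3.3} up to a factor of two. Summing the defining lower bound \eqref{3.4} over $T\in\mathbb{T}$ yields
$$2^{2n}\sum_{T\in\mathbb{T}}|I_T|\;\leq\;\sum_{T\in\mathbb{T}}\sum_{P\in T}|\langle f,\Phi^{j}_{P_j}\rangle|^{2}.$$
Thus it suffices to prove the bilateral Bessel inequality
$$\sum_{T\in\mathbb{T}}\sum_{P\in T}|\langle f,\Phi^{j}_{P_j}\rangle|^{2}\;\lesssim\;\|f\|_{L^{2}}^{2},$$
with implicit constant independent of the chain $\mathbb{T}$ and of the level $n$; combined with the above this gives $2^{n}(\sum_T|I_T|)^{1/2}\lesssim\|f\|_{L^2}$, which is exactly the desired bound on $\text{energy}_j$.

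For the Bessel estimate I would use a $TT^{*}$ / Cotlar--Stein argument. Setting $a_{T,P}:=\langle f,\Phi^{j}_{P_j}\rangle$, the left-hand side equals $\langle f,g\rangle$ with $g:=\sum_{T\in\mathbb{T}}\sum_{P\in T}\overline{a_{T,P}}\Phi^{j}_{P_j}$, so by Cauchy--Schwarz it is enough to show $\|g\|_{L^{2}}^{2}\lesssim\sum_{T\in\mathbb{T}}\sum_{P\in T}|a_{T,P}|^{2}$. Expanding $\|g\|_{L^{2}}^{2}$ produces a double sum over pairs of wave-packet inner products $\langle\Phi^{j}_{P_j},\Phi^{j}_{P'_j}\rangle$: the diagonal terms give back $\sum|a_{T,P}|^{2}$, while the off-diagonal terms must be shown to be negligible. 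For $P\neq P'$ lying in the same tree $T$, the rank-$1$/sparse structure (Remark \ref{properties}) forces the frequency intervals $\omega_{P_j}$ and $\omega_{P'_j}$ to be lacunary about $\omega_{T,j}$, producing orthogonality up to Schwartz tails. For $(T,P)$ and $(T',P')$ in distinct trees, the strong $j$-disjointness of Definition \ref{chain of disjoint trees} guarantees that whenever $2\omega_{P_j}\cap 2\omega_{P'_j}\neq\emptyset$ one has a spatial separation between $I_{P}$ and the top $I_{T'}$ (or vice versa), and the rapid spatial decay of the $L^{2}$-normalized wave packets then makes the contribution absolutely summable.

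The main obstacle is that the wave packets are only Schwartz-decaying rather than compactly supported, so the decomposition into a diagonal block plus genuinely almost-orthogonal off-diagonal blocks is not sharp: one has to carry out a Schur-type bookkeeping organized by the two scales $|I_{P}|$ and $|I_{P'}|$, using the approximate cutoffs $\widetilde{\chi}^{M}_{I_P}$ from Lemma \ref{size estimates}, and then invoke the subtree upper bound \eqref{3.5} as a phase-space $BMO$ substitute to control the local mass $\sum_{P\in T'}|a_{T,P}|^{2}\leq 2^{2(n+1)}|I_{T'}|$ on every subtree. It is exactly this interplay between strong $j$-disjointness across the chain and the uniform subtree bound within each tree that converts approximate orthogonality into a summable $\|g\|_{L^{2}}$ estimate, and it is the delicate part of the argument developed in \cite{MTT2} that one would reproduce here.
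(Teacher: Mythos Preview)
The paper does not prove this lemma: it is quoted verbatim from \cite{MTT2} and no argument is given. Your sketch is exactly the standard proof from that reference---reduce via \eqref{3.4} to the Bessel inequality $\sum_{T\in\mathbb{T}}\sum_{P\in T}|\langle f,\Phi^{j}_{P_j}\rangle|^{2}\lesssim\|f\|_{L^{2}}^{2}$, then run a $TT^{*}$ argument exploiting (i) lacunarity of the $j$-tiles within each $i$-tree and (ii) strong $j$-disjointness across the chain, with \eqref{3.5} supplying the local mass control needed for the Schur bookkeeping of the Schwartz tails---so there is nothing to compare and your outline is correct.
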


\section{Proof of Theorem \ref{main1}}

In this section, we prove Theorem \ref{main1} by carrying out the proof of Proposition \ref{RW-type equivalent} for model operators $\Pi^{\varepsilon}_{\vec{\mathbb{P}}}$ defined in \eqref{model1} with $\vec{\mathbb{P}}=\widetilde{\mathbb{P}'}\times\mathbb{P}''$.

Fix indices $p_{1}$, $p_{2}$, $p$ as in the hypothesis of Proposition \ref{RW-type equivalent}. Fix arbitrary measurable sets $E_{1}$, $E_{2}$, $E_{3}$ of finite measure (by using the scaling invariance of $\Pi^{\varepsilon}_{\vec{\mathbb{P}}}$, we can assume further that $|E_{3}|=1$). Our goal is to find $E'_{3}\subseteq E_{3}$ with $|E'_{3}|\simeq|E_{3}|=1$ such that, for any functions $|f_{1}|\leq\chi_{E_{1}}$, $|f_{2}|\leq\chi_{E_{2}}$ and $|f_{3}|\leq\chi_{E'_{3}}$, one has the corresponding trilinear forms $\Lambda^{\varepsilon}_{\vec{\mathbb{P}}}(f_{1},f_{2},f_{3})$ defined by
\begin{equation}\label{4.1}
  \Lambda^{\varepsilon}_{\vec{\mathbb{P}}}(f_{1},f_{2},f_{3}):=\int_{\mathbb{R}^{2}}\Pi^{\varepsilon}_{\vec{\mathbb{P}}}(f_{1},f_{2})(x)f_{3}(x)dx
\end{equation}
satisfy estimates
\begin{equation}\label{4.2}
  |\Lambda^{\varepsilon}_{\vec{\mathbb{P}}}(f_{1},f_{2},f_{3})|=|\sum_{\vec{P}\in\vec{\mathbb{P}}}\frac{C^{\varepsilon}_{Q_{\vec{P}}}}{|I_{\vec{P}}|^{\frac{1}{2}}}
  \langle f_{1},\Phi^{1}_{\vec{P}_{1}}\rangle\langle f_{2},\Phi^{2}_{\vec{P}_{2}}\rangle\langle f_{3},\Phi^{3}_{\vec{P}_{3}}\rangle|\lesssim_{\varepsilon,p,p_{1},p_{2}}|E_{1}|^{\frac{1}{p_{1}}}|E_{2}|^{\frac{1}{p_{2}}},
\end{equation}
where $p_{1}$ is larger than but close to $1$, while $p_{2}$ is smaller than but close to $2$.

From \cite{MPTT2}, we can find the following generic decomposition lemma.
\begin{lem}\label{generic decomposition}
Let $J\subseteq\mathbb{R}$ be a fixed interval. Then every smooth bump function $\phi_{J}$ adapted to $J$ can be naturally decomposed as follows:
\begin{equation*}
  \phi_{J}=\sum_{\ell\in\mathbb{N}}2^{-100\ell}\phi^{\ell}_{J},
\end{equation*}
where for every $\ell\in\mathbb{N}$, $\phi^{\ell}_{J}$ is also a bump function adapted to $J$ but having the additional property that $supp(\phi^{\ell}_{J})\subseteq2^{\ell}J$. If in addition we assume that $\int_{\mathbb{R}}\phi_{J}(x)dx=0$, then the functions $\phi^{\ell}_{J}$ can be chosen such that $\int_{\mathbb{R}}\phi^{\ell}_{J}(x)dx=0$ for every $\ell\in\mathbb{N}$.
\end{lem}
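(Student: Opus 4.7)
The plan is to build the decomposition from a smooth partition of unity on dyadic dilates of $J$, exploiting the rapid off-diagonal decay of $\phi_J$ that is encoded in Definition \ref{bump functions}. I would fix a smooth function $\rho\in C^{\infty}_{c}(\mathbb{R})$ with $\rho\equiv 1$ on $J$ and $\mathrm{supp}\,\rho\subseteq 2J$, then let $\rho_{\ell}$ be the dilate satisfying $\rho_{\ell}\equiv 1$ on $2^{\ell-1}J$ and $\mathrm{supp}\,\rho_{\ell}\subseteq 2^{\ell}J$. The telescoping identity
\begin{equation*}
\phi_{J}=\rho_{0}\phi_{J}+\sum_{\ell\geq 1}(\rho_{\ell}-\rho_{\ell-1})\phi_{J}=:\sum_{\ell\in\mathbb{N}}\eta_{\ell}\phi_{J}
\end{equation*}
then localizes each piece to the dyadic annulus $2^{\ell}J\setminus 2^{\ell-2}J$ for $\ell\geq 2$, and to $2J$ for $\ell\in\{0,1\}$. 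The candidate decomposition is $\phi^{\ell}_{J}:=2^{100\ell}\,\eta_{\ell}\phi_{J}$.

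The key estimate is that on $\mathrm{supp}(\eta_{\ell}\phi_{J})$ one has $\mathrm{dist}(x,J)/|J|\gtrsim 2^{\ell}$, so \eqref{2.1} applied to $\phi_{J}$ gives $|\phi_{J}^{(l)}(x)|\lesssim |J|^{-l}\,2^{-\alpha\ell}$ for arbitrarily large $\alpha$ (permissible by the ``sufficiently many derivatives'' clause in the definition). Combined with $|\eta_{\ell}^{(k)}(x)|\lesssim (2^{\ell}|J|)^{-k}$ and Leibniz, this yields $|(\eta_{\ell}\phi_{J})^{(l)}(x)|\lesssim |J|^{-l}\,2^{-\alpha\ell}$. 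After multiplying by $2^{100\ell}$ and splitting the surplus decay between the $(1+\mathrm{dist}(x,J)/|J|)^{-\alpha_{0}}$ envelope (trivial since $\mathrm{supp}\,\phi^{\ell}_{J}\subseteq 2^{\ell}J$) and the bump bounds in \eqref{2.1}, one concludes that $\phi^{\ell}_{J}$ is a bump adapted to $J$ with any prescribed decay rate $\alpha_{0}$, provided $\alpha$ was chosen $\geq 100+\alpha_{0}+l$.

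The mean-zero refinement is a telescoping correction. Setting $c_{\ell}:=\int_{\mathbb{R}}\eta_{\ell}\phi_{J}\,dx$, the hypothesis $\int\phi_{J}=0$ gives $\sum_{\ell}c_{\ell}=0$, but the individual $c_{\ell}$ need not vanish. I would pick an $L^{1}$-normalized bump $\theta_{\ell}$ with $\mathrm{supp}\,\theta_{\ell}\subseteq 2^{\ell}J$, $\int\theta_{\ell}=1$ and $|\theta_{\ell}^{(k)}|\lesssim |J|^{-k}2^{-(k+1)\ell}$, and replace $\eta_{\ell}\phi_{J}$ by
\begin{equation*}
\widetilde{\eta_{\ell}\phi_{J}}:=\eta_{\ell}\phi_{J}+C_{\ell-1}\theta_{\ell-1}-C_{\ell}\theta_{\ell},\qquad C_{\ell}:=\sum_{k\leq\ell}c_{k}=-\sum_{k>\ell}c_{k},
\end{equation*}
with the convention $C_{-1}=0,\ \theta_{-1}\equiv 0$. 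Summing over $\ell$ telescopes back to $\phi_{J}$, and each $\widetilde{\eta_{\ell}\phi_{J}}$ has vanishing integral by construction. Since $|c_{\ell}|\lesssim |J|\cdot 2^{(1-\alpha)\ell}$ forces $|C_{\ell}|\lesssim |J|\cdot 2^{(1-\alpha)\ell}$ as well, the correction terms satisfy the same quality of bounds as $\eta_{\ell}\phi_{J}$ and do not disturb the previous analysis. The main obstacle is essentially bookkeeping: verifying that the single parameter $\alpha$ from the bump hypothesis simultaneously absorbs the $2^{100\ell}$ normalization loss, leaves a decay factor for the $(1+\mathrm{dist}(\cdot,J)/|J|)^{-\alpha_{0}}$ envelope, and dominates the correction terms; since $\alpha$ can be chosen as large as needed, this is feasible.
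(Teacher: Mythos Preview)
Your argument is correct and is essentially the standard proof of this decomposition. Note, however, that the paper does not actually prove Lemma~\ref{generic decomposition}: it merely quotes the result from \cite{MPTT2}, so there is no ``paper's own proof'' to compare against beyond the original reference. Your construction---a telescoping smooth partition of unity on dyadic dilates $2^{\ell}J$, followed by a telescoping mean-zero correction---is exactly the mechanism used in \cite{MPTT2} (see also the discussion in \cite{MS}). One minor indexing point worth cleaning up: you introduce $\rho$ with $\rho\equiv 1$ on $J$ and $\mathrm{supp}\,\rho\subseteq 2J$, but then require $\rho_{0}\equiv 1$ on $\tfrac{1}{2}J$ and $\mathrm{supp}\,\rho_{0}\subseteq J$ so that $\mathrm{supp}\,\phi_{J}^{0}\subseteq 2^{0}J=J$; make explicit that $\rho=\rho_{1}$ in your scaling convention. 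Otherwise the bookkeeping (Leibniz combined with the decay \eqref{2.1} at exponent $\alpha\geq 100+\alpha_{0}+l$, and the tail estimate $|C_{\ell}|=|\sum_{k>\ell}c_{k}|\lesssim |J|\,2^{(1-\alpha)\ell}$) is handled correctly.
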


We use $2^{\ell}J$ to denote the interval having the same center as $J$ but with length $2^{\ell}$ times that of $J$ hereafter.

By using Lemma \ref{generic decomposition}, we can estimate the left-hand side of \eqref{4.2} by
\begin{equation}\label{4.3}
  |\Lambda^{\varepsilon}_{\vec{\mathbb{P}}}(f_{1},f_{2},f_{3})|\lesssim\sum_{\ell\in\mathbb{N}}2^{-100\ell}
  \Lambda^{\varepsilon,\ell}_{\vec{\mathbb{P}}}(f_{1},f_{2},f_{3}).
\end{equation}
The tri-linear forms $\Lambda^{\varepsilon,\ell}_{\vec{\mathbb{P}}}(f_{1},f_{2},f_{3})$ ($\ell\in\mathbb{N}$) are defined by
\begin{equation}\label{4.4}
  \Lambda^{\varepsilon,\ell}_{\vec{\mathbb{P}}}(f_{1},f_{2},f_{3}):=\sum_{\vec{P}\in\vec{\mathbb{P}}}
  \frac{|C^{\varepsilon}_{Q_{\vec{P}}}|}{|I_{\vec{P}}|^{\frac{1}{2}}}
  |\langle f_{1},\Phi^{1}_{\vec{P}_{1}}\rangle||\langle f_{2},\Phi^{2}_{\vec{P}_{2}}\rangle||\langle f_{3},\Phi^{3,\ell}_{\vec{P}_{3}}\rangle|,
\end{equation}
where the new bi-parameter wave packets $\Phi^{3,\ell}_{\vec{P}_{3}}:=\Phi^{3,\ell}_{\widetilde{P'_{3}}}\otimes\Phi^{3}_{P''_{3}}$ with additional property that $supp(\Phi^{3,\ell}_{\widetilde{P'_{3}}})\subseteq2^{\ell}I_{\widetilde{P'_{3}}}=2^{\ell}I_{\widetilde{P'}}$.

For every $\ell\in\mathbb{N}$, we define the sets as follows:
\begin{equation}\label{4.5}
  \Omega_{-10\ell}:=\{x\in\mathbb{R}^{2}:MM(\frac{\chi_{E_{1}}}{|E_{1}|})(x)>C2^{10\ell}\}
  \cup\{x\in\mathbb{R}^{2}:MM(\frac{\chi_{E_{2}}}{|E_{2}|})(x)>C2^{10\ell}\},
\end{equation}
and
\begin{equation}\label{4.6}
  \widetilde{\Omega}_{-10\ell}:=\{x\in\mathbb{R}^{2}:MM(\chi_{\Omega_{-10\ell}})(x)>2^{-\ell}\},
\end{equation}
where the double maximal operator $MM$ is given by
\begin{equation}\label{4.7}
  MM(h)(x,y):=\sup_{\substack{\text{dyadic rectangle} \, R \\ (x,y)\in R}}\frac{1}{|R|}\int_{R}|h(u,v)|dudv.
\end{equation}
Finally, we define the exceptional set
\begin{equation}\label{4.8}
  U:=\bigcup_{\ell\in\mathbb{N}}\widetilde{\Omega}_{-10\ell}.
\end{equation}
It is clear that $|U|<\frac{1}{10}$ if $C$ is a large enough constant, which we fix from now on. Then, we define $E'_{3}:=E_{3}\setminus U$ and observe that $|E'_{3}|\simeq1$.

Now fix $\ell\in\mathbb{N}$, and split the trilinear form $\Lambda^{\varepsilon,\ell}_{\vec{\mathbb{P}}}(f_{1},f_{2},f_{3})$ defined in \eqref{4.4} into two parts as follows:
\begin{eqnarray}\label{4.9}
% \nonumber to remove numbering (before each equation)
 \nonumber \Lambda^{\varepsilon,\ell}_{\vec{\mathbb{P}}}(f_{1},f_{2},f_{3})&=& \sum_{\substack{\vec{P}\in\vec{\mathbb{P}}:\\I_{\vec{P}}\cap\Omega^{c}_{-10\ell}\neq\emptyset}}
  \frac{|C^{\varepsilon}_{Q_{\vec{P}}}|}{|I_{\vec{P}}|^{\frac{1}{2}}}
  |\langle f_{1},\Phi^{1}_{\vec{P}_{1}}\rangle||\langle f_{2},\Phi^{2}_{\vec{P}_{2}}\rangle||\langle f_{3},\Phi^{3,\ell}_{\vec{P}_{3}}\rangle| \\
  &&+\sum_{\substack{\vec{P}\in\vec{\mathbb{P}}:\\I_{\vec{P}}\cap\Omega^{c}_{-10\ell}=\emptyset}}
  \frac{|C^{\varepsilon}_{Q_{\vec{P}}}|}{|I_{\vec{P}}|^{\frac{1}{2}}}
  |\langle f_{1},\Phi^{1}_{\vec{P}_{1}}\rangle||\langle f_{2},\Phi^{2}_{\vec{P}_{2}}\rangle||\langle f_{3},\Phi^{3,\ell}_{\vec{P}_{3}}\rangle|\\
 \nonumber &=:& \Lambda^{\varepsilon,\ell}_{\vec{\mathbb{P}},I}(f_{1},f_{2},f_{3})+\Lambda^{\varepsilon,\ell}_{\vec{\mathbb{P}},II}(f_{1},f_{2},f_{3}),
\end{eqnarray}
where the notation $A^{c}$ denotes the complementary set of a set $A$.

\subsection{Estimates for trilinear form $\Lambda^{\varepsilon,\ell}_{\vec{\mathbb{P}},I}(f_{1},f_{2},f_{3})$}
We can decompose the collection $\widetilde{\mathbb{P}'}$ of tri-tiles into
\begin{equation}\label{4.10}
  \widetilde{\mathbb{P}'}=\bigcup_{k'\in\mathbb{Z}}\widetilde{\mathbb{P}'_{k'}},
\end{equation}
where
\begin{equation}\label{4.11}
  \widetilde{\mathbb{P}'_{k'}}:=\{\widetilde{P'}\in\widetilde{\mathbb{P}'}: |I_{\widetilde{P'}}|=2^{-k'}\}.
\end{equation}
As a consequence, we can split the trilinear form $\Lambda^{\varepsilon,\ell}_{\vec{\mathbb{P}},I}(f_{1},f_{2},f_{3})$ into
\begin{eqnarray}\label{4.12}
% \nonumber to remove numbering (before each equation)
  \Lambda^{\varepsilon,\ell}_{\vec{\mathbb{P}},I}(f_{1},f_{2},f_{3})&=&\sum_{k'\in\mathbb{Z}}
  \sum_{\substack{\vec{P}\in\widetilde{\mathbb{P}'_{k'}}\times\mathbb{P}'': \\ I_{\vec{P}}\cap\Omega^{c}_{-10\ell}\neq\emptyset}}
  |C^{\varepsilon}_{Q_{\vec{P}}}|\frac{|I_{\widetilde{P'}}|}{|I_{P''}|^{\frac{1}{2}}}
  |\langle \frac{\langle f_{1},\Phi^{1}_{\widetilde{P'_{1}}}\rangle}{|I_{\widetilde{P'}}|^{\frac{1}{2}}},\Phi^{1}_{P''_{1}}\rangle|\\
  \nonumber && \quad\quad\quad\quad\quad\quad\quad\quad\quad \times|\langle \frac{\langle f_{2},\Phi^{2}_{\widetilde{P'_{2}}}\rangle}{|I_{\widetilde{P'}}|^{\frac{1}{2}}},\Phi^{2}_{P''_{2}}\rangle|
  |\langle \frac{\langle f_{3},\Phi^{3,\ell}_{\widetilde{P'_{3}}}\rangle}{|I_{\widetilde{P'}}|^{\frac{1}{2}}},\Phi^{3}_{P''_{3}}\rangle|.
\end{eqnarray}

Note that by Lemma \ref{Fourier coefficient1}, we can estimate the Fourier coefficients $C^{\varepsilon}_{Q_{\vec{P}}}:=C^{\varepsilon}_{Q_{\vec{P}},\vec{0},\vec{0},\vec{0}}$ for each $\vec{P}\in\widetilde{\mathbb{P}'_{k'}}\times\mathbb{P}''$ ($k'\in\mathbb{Z}$) by
\begin{equation}\label{4.13}
  |C^{\varepsilon}_{Q_{\vec{P}}}|\lesssim C^{\varepsilon}_{k'} \,\,\,\,\,\,\,\,\, with \,\,\,\,\,\,\,\,\, \sum_{k'\in\mathbb{Z}}C^{\varepsilon}_{k'}\lesssim_{\varepsilon}1.
\end{equation}
For each fixed $\widetilde{P'}\in\widetilde{\mathbb{P}'}$, we define the sub-collection
\begin{equation*}
  \mathbb{P}''_{\widetilde{P'}}:=\{P''\in\mathbb{P}'': \, I_{\vec{P}}\cap\Omega^{c}_{-10\ell}\neq\emptyset\}.
\end{equation*}
Therefore, by using Proposition \ref{tile norm estimates}, we derive the following estimates
\begin{eqnarray}\label{4.14}
% \nonumber to remove numbering (before each equation)
\nonumber \Lambda^{\varepsilon,\ell}_{\vec{\mathbb{P}},I}(f_{1},f_{2},f_{3})&\lesssim&\sum_{k'\in\mathbb{Z}}C^{\varepsilon}_{k'}
\sum_{\widetilde{P'}\in\widetilde{\mathbb{P}'_{k'}}}|I_{\widetilde{P'}}|
[\prod_{j=1}^{2}(energy_{j}((\langle \frac{\langle f_{j},\Phi^{j}_{\widetilde{P'_{j}}}\rangle}{|I_{\widetilde{P'}}|^{\frac{1}{2}}},\Phi^{j}_{P''_{j}}\rangle)_{P''\in\mathbb{P}''_{\widetilde{P'}}}))^{1-\theta_{j}}\\
  &\times&(size_{j}((\langle \frac{\langle f_{j},\Phi^{j}_{\widetilde{P'_{j}}}\rangle}{|I_{\widetilde{P'}}|^{\frac{1}{2}}},\Phi^{j}_{P''_{j}}\rangle)_{P''\in\mathbb{P}''_{\widetilde{P'}}}))^{\theta_{j}}]
 (size_{3}((\langle \frac{\langle f_{3},\Phi^{3,\ell}_{\widetilde{P'_{3}}}\rangle}{|I_{\widetilde{P'}}|^{\frac{1}{2}}},\Phi^{3}_{P''_{3}}\rangle)_{P''\in\mathbb{P}''_{\widetilde{P'}}}))^{\theta_{3}}\\
 \nonumber && \quad\quad\quad\quad\quad\quad\quad\quad\quad\quad\quad\quad \times(energy_{3}((\langle \frac{\langle f_{3},\Phi^{3,\ell}_{\widetilde{P'_{3}}}\rangle}{|I_{\widetilde{P'}}|^{\frac{1}{2}}},\Phi^{3}_{P''_{3}}\rangle)_{P''\in\mathbb{P}''_{\widetilde{P'}}}))^{1-\theta_{3}}
\end{eqnarray}
for any $0\leq\theta_{1},\theta_{2},\theta_{3}<1$ with $\theta_{1}+\theta_{2}+\theta_{3}=1$.

To estimate the right-hand side of \eqref{4.14}, note that $I_{\vec{P}}\cap\Omega^{c}_{-10\ell}\neq\emptyset$ and $supp \, f_{3}\subseteq E'_{3}\subseteq\mathbb{R}^{2}\setminus U$, we apply the \emph{size} estimates in Lemma \ref{size estimates} and get for each $\widetilde{P'}\in\widetilde{\mathbb{P}'_{k'}}$,
\begin{equation}\label{4.15}
  size_{1}((\langle \frac{\langle f_{1},\Phi^{1}_{\widetilde{P'_{1}}}\rangle}{|I_{\widetilde{P'}}|^{\frac{1}{2}}},\Phi^{1}_{P''_{1}}\rangle)_{P''\in\mathbb{P}''_{\widetilde{P'}}})
  \lesssim\sup_{P''\in\mathbb{P}''_{\widetilde{P'}}}\frac{1}{|I_{P''}|}\int_{\mathbb{R}}|\frac{\langle f_{1},\Phi^{1}_{\widetilde{P'_{1}}}\rangle}{|I_{\widetilde{P'}}|^{\frac{1}{2}}}|\widetilde{\chi}^{M}_{I_{P''}}dx
  \lesssim2^{10\ell}|E_{1}|,
\end{equation}
\begin{equation}\label{4.16}
  size_{2}((\langle \frac{\langle f_{2},\Phi^{2}_{\widetilde{P'_{2}}}\rangle}{|I_{\widetilde{P'}}|^{\frac{1}{2}}},\Phi^{2}_{P''_{2}}\rangle)_{P''\in\mathbb{P}''_{\widetilde{P'}}})
  \lesssim\sup_{P''\in\mathbb{P}''_{\widetilde{P'}}}\frac{1}{|I_{P''}|}\int_{\mathbb{R}}|\frac{\langle f_{2},\Phi^{2}_{\widetilde{P'_{2}}}\rangle}{|I_{\widetilde{P'}}|^{\frac{1}{2}}}|\widetilde{\chi}^{M}_{I_{P''}}dx
  \lesssim2^{10\ell}|E_{2}|,
\end{equation}
\begin{equation}\label{4.17}
  size_{3}((\langle \frac{\langle f_{3},\Phi^{3,\ell}_{\widetilde{P'_{3}}}\rangle}{|I_{\widetilde{P'}}|^{\frac{1}{2}}},\Phi^{3}_{P''_{3}}\rangle)_{P''\in\mathbb{P}''_{\widetilde{P'}}})
  \lesssim\sup_{P''\in\mathbb{P}''_{\widetilde{P'}}}\frac{1}{|I_{P''}|}\int_{\mathbb{R}}|\frac{\langle f_{3},\Phi^{3,\ell}_{\widetilde{P'_{3}}}\rangle}{|I_{\widetilde{P'}}|^{\frac{1}{2}}}|\widetilde{\chi}^{M}_{I_{P''}}dx
  \lesssim1,
\end{equation}
where $M>0$ is sufficiently large. By applying the \emph{energy} estimates in Lemma \ref{energy estimates} and H\"{o}lder estimates, we have for each $\widetilde{P'}\in\widetilde{\mathbb{P}'_{k'}}$,
\begin{equation}\label{4.18}
  energy_{1}((\langle \frac{\langle f_{1},\Phi^{1}_{\widetilde{P'_{1}}}\rangle}{|I_{\widetilde{P'}}|^{\frac{1}{2}}},\Phi^{1}_{P''_{1}}\rangle)_{P''\in\mathbb{P}''_{\widetilde{P'}}})\lesssim\|\frac{\langle f_{1},\Phi^{1}_{\widetilde{P'_{1}}}\rangle}{|I_{\widetilde{P'}}|^{\frac{1}{2}}}\|_{L^{2}(\mathbb{R})}\lesssim
  (\int_{E_{1}}\frac{\widetilde{\chi}^{100}_{I_{\widetilde{P'}}}(x_{1})}{|I_{\widetilde{P'}}|}dx_{1}dx_{2})^{\frac{1}{2}},
\end{equation}
\begin{equation}\label{4.19}
  energy_{2}((\langle \frac{\langle f_{2},\Phi^{2}_{\widetilde{P'_{2}}}\rangle}{|I_{\widetilde{P'}}|^{\frac{1}{2}}},\Phi^{2}_{P''_{2}}\rangle)_{P''\in\mathbb{P}''_{\widetilde{P'}}})\lesssim\|\frac{\langle f_{2},\Phi^{2}_{\widetilde{P'_{2}}}\rangle}{|I_{\widetilde{P'}}|^{\frac{1}{2}}}\|_{L^{2}(\mathbb{R})}\lesssim
  (\int_{E_{2}}\frac{\widetilde{\chi}^{100}_{I_{\widetilde{P'}}}(x_{1})}{|I_{\widetilde{P'}}|}dx_{1}dx_{2})^{\frac{1}{2}},
\end{equation}
\begin{equation}\label{4.20}
  energy_{3}((\langle \frac{\langle f_{3},\Phi^{3,\ell}_{\widetilde{P'_{3}}}\rangle}{|I_{\widetilde{P'}}|^{\frac{1}{2}}},\Phi^{3}_{P''_{3}}\rangle)_{P''\in\mathbb{P}''_{\widetilde{P'}}})\lesssim\|\frac{\langle f_{3},\Phi^{3,\ell}_{\widetilde{P'_{3}}}\rangle}{|I_{\widetilde{P'}}|^{\frac{1}{2}}}\|_{L^{2}(\mathbb{R})}\lesssim
  (\int_{E'_{3}}\frac{\widetilde{\chi}^{100,\ell}_{I_{\widetilde{P'}}}(x_{1})}{|I_{\widetilde{P'}}|}dx_{1}dx_{2})^{\frac{1}{2}},
\end{equation}
where the approximate cutoff function $\widetilde{\chi}^{100,\ell}_{I_{\widetilde{P'}}}(x_{1})$ decays rapidly (of order 100) away from the interval $I_{\widetilde{P'}}$ at scale $|I_{\widetilde{P'}}|$ and satisfies additional property that $supp \, \widetilde{\chi}^{100,\ell}_{I_{\widetilde{P'}}}\subseteq2^{\ell}I_{\widetilde{P'}}$.

Now we insert the size and energy estimates \eqref{4.15}-\eqref{4.20} into \eqref{4.14} and get
\begin{eqnarray}\label{4.21}
% \nonumber to remove numbering (before each equation)
  &&\Lambda^{\varepsilon,\ell}_{\vec{\mathbb{P}},I}(f_{1},f_{2},f_{3}) \\
 \nonumber &\lesssim&2^{10\ell}|E_{1}|^{\theta_{1}}|E_{2}|^{\theta_{2}}
  \sum_{k'\in\mathbb{Z}}C^{\varepsilon}_{k'}\sum_{\widetilde{P'}\in\widetilde{\mathbb{P}'_{k'}}}
  (\int_{E_{1}}\widetilde{\chi}^{100}_{I_{\widetilde{{P'}}}}dx)^{\frac{1-\theta_{1}}{2}}
  (\int_{E_{2}}\widetilde{\chi}^{100}_{I_{\widetilde{{P'}}}}dx)^{\frac{1-\theta_{2}}{2}}
  (\int_{E'_{3}}\widetilde{\chi}^{100,\ell}_{I_{\widetilde{{P'}}}}dx)^{\frac{1-\theta_{3}}{2}}.
\end{eqnarray}
Since $|I_{\widetilde{P'}}|=2^{-k'}$ for every $\widetilde{P'}\in\widetilde{\mathbb{P}'_{k'}}$, all the dyadic intervals $I_{\widetilde{P'}}$ are disjoint, thus by using H\"{o}lder inequality, we can estimate the inner sum in the right-hand side of \eqref{4.21} by
\begin{eqnarray}\label{4.22}
% \nonumber to remove numbering (before each equation)
  && (\sum_{\widetilde{P'}\in\widetilde{\mathbb{P}'_{k'}}}\int_{E_{1}}\widetilde{\chi}^{100}_{I_{\widetilde{{P'}}}}dx)^{\frac{1-\theta_{1}}{2}}
  (\sum_{\widetilde{P'}\in\widetilde{\mathbb{P}'_{k'}}}\int_{E_{2}}\widetilde{\chi}^{100}_{I_{\widetilde{{P'}}}}dx)^{\frac{1-\theta_{2}}{2}}
  (\sum_{\widetilde{P'}\in\widetilde{\mathbb{P}'_{k'}}}\int_{E'_{3}}\widetilde{\chi}^{100,\ell}_{I_{\widetilde{{P'}}}}dx)^{\frac{1-\theta_{3}}{2}}\\
  \nonumber &\lesssim&|E_{1}|^{\frac{1-\theta_{1}}{2}}|E_{2}|^{\frac{1-\theta_{2}}{2}}.
\end{eqnarray}
Combining the estimates \eqref{4.13}, \eqref{4.21} and \eqref{4.22}, we arrive at
\begin{eqnarray}\label{4.23}
% \nonumber to remove numbering (before each equation)
  \Lambda^{\varepsilon,\ell}_{\vec{\mathbb{P}},I}(f_{1},f_{2},f_{3})&\lesssim& 2^{10\ell}|E_{1}|^{\theta_{1}}|E_{2}|^{\theta_{2}}|E_{1}|^{\frac{1-\theta_{1}}{2}}|E_{2}|^{\frac{1-\theta_{2}}{2}}
  \sum_{k'\in\mathbb{Z}}C^{\varepsilon}_{k'}\\
 \nonumber &&\lesssim_{\varepsilon,\theta_{1},\theta_{2},\theta_{3}} 2^{20\ell}|E_{1}|^{\frac{1+\theta_{1}}{2}}|E_{2}|^{\frac{1+\theta_{2}}{2}}
\end{eqnarray}
for every $\ell\in\mathbb{N}$ and $0\leq\theta_{1},\theta_{2},\theta_{3}<1$ with $\theta_{1}+\theta_{2}+\theta_{3}=1$.

By taking $\theta_{1}$ sufficiently close to $1$ and $\theta_{2}$ sufficiently close to $0$, one can make the exponents $\frac{2}{1+\theta_{1}}=p_{1}$ strictly larger than $1$ and close to $1$ and $\frac{2}{1+\theta_{2}}=p_{2}$ strictly smaller than $2$ and close to $2$. We finally get the estimate
\begin{equation}\label{I}
  \Lambda^{\varepsilon,\ell}_{\vec{\mathbb{P}},I}(f_{1},f_{2},f_{3})\lesssim_{\varepsilon,p,p_{1},p_{2}}2^{10\ell}|E_{1}|^{\frac{1}{p_{1}}}|E_{2}|^{\frac{1}{p_{2}}}
\end{equation}
for every $\ell\in\mathbb{N}$, $\varepsilon>0$ and $p$, $p_{1}$, $p_{2}$ satisfy the hypothesis of Proposition \ref{RW-type equivalent}.

\subsection{Estimates for trilinear form $\Lambda^{\varepsilon,\ell}_{\vec{\mathbb{P}},II}(f_{1},f_{2},f_{3})$}
One can observe that if $I_{\vec{P}}\subseteq\Omega_{-10\ell}$, then $2^{\ell}I_{\widetilde{P'}}\times I_{P''}\subseteq\widetilde{\Omega}_{-10\ell}$. Therefore, for each fixed $\widetilde{P'}\in\widetilde{\mathbb{P}'}$, we define the corresponding sub-collection of $\mathbb{P}''$ by
\begin{equation*}
  \mathbb{P}''_{\widetilde{P'}}:=\{P''\in\mathbb{P}'': \, I_{\vec{P}}\subseteq\Omega_{-10\ell}\},
\end{equation*}
then we can decompose the collection $\mathbb{P}''_{\widetilde{P'}}$ further, as follows:
\begin{equation}\label{4.24}
  \mathbb{P}''_{\widetilde{P'}}=\bigcup_{d''\in\mathbb{N}}\mathbb{P}''_{\widetilde{P'},d''},
\end{equation}
where
\begin{equation}\label{4.25}
  \mathbb{P}''_{\widetilde{P'},d''}:=\{P''\in\mathbb{P}''_{\widetilde{P'}}: \, 2^{\ell}I_{\widetilde{P'}}\times2^{d''}I_{P''}\subseteq\widetilde{\Omega}_{-10\ell}\}
\end{equation}
and $d''$ is maximal with this property.

Now we apply both the decompositions of $\widetilde{\mathbb{P}'}$ and $\mathbb{P}''_{\widetilde{P'}}$ defined in \eqref{4.10}, \eqref{4.24} at the same time, and split the trilinear form $\Lambda^{\varepsilon,\ell}_{\vec{\mathbb{P}},II}(f_{1},f_{2},f_{3})$ into
\begin{eqnarray}\label{4.26}
% \nonumber to remove numbering (before each equation)
 \nonumber \Lambda^{\varepsilon,\ell}_{\vec{\mathbb{P}},II}(f_{1},f_{2},f_{3})&=&\sum_{k'\in\mathbb{Z}}
  \sum_{\widetilde{P'}\in\widetilde{\mathbb{P}'_{k'}}}
  |C^{\varepsilon}_{Q_{\vec{P}}}||I_{\widetilde{P'}}|\sum_{d''\in\mathbb{N}}\sum_{P''\in\mathbb{P}''_{\widetilde{P'},d''}}\frac{1}{|I_{P''}|^{\frac{1}{2}}}\\
  && \quad\quad\quad\quad\quad \times|\langle \frac{\langle f_{1},\Phi^{1}_{\widetilde{P'_{1}}}\rangle}{|I_{\widetilde{P'}}|^{\frac{1}{2}}},\Phi^{1}_{P''_{1}}\rangle||\langle \frac{\langle f_{2},\Phi^{2}_{\widetilde{P'_{2}}}\rangle}{|I_{\widetilde{P'}}|^{\frac{1}{2}}},\Phi^{2}_{P''_{2}}\rangle|
  |\langle \frac{\langle f_{3},\Phi^{3,\ell}_{\widetilde{P'_{3}}}\rangle}{|I_{\widetilde{P'}}|^{\frac{1}{2}}},\Phi^{3}_{P''_{3}}\rangle|.
\end{eqnarray}

In the inner sum of the above \eqref{4.26}, since $ 2^{\ell}I_{\widetilde{P'}}\times2^{d''}I_{P''}\subseteq\widetilde{\Omega}_{-10\ell}$, $supp(\Phi^{3,\ell}_{\widetilde{P'_{3}}})\subseteq2^{\ell}I_{\widetilde{P'}}$ and $supp \, f_{3}\subseteq E'_{3}\subseteq\mathbb{R}^{2}\setminus U$, we can assume hereafter in this subsection that
\begin{equation}\label{4.27}
  |f_{3}|\leq\chi_{E'_{3}}\chi_{2^{\ell}I_{\widetilde{P'}}}\chi_{(2^{d''}I_{P''})^{c}}.
\end{equation}

By using Proposition \ref{tile norm estimates} and \eqref{4.13}, we derive from \eqref{4.26} the following estimates
\begin{eqnarray}\label{4.28}
% \nonumber to remove numbering (before each equation)
 &&\Lambda^{\varepsilon,\ell}_{\vec{\mathbb{P}},II}(f_{1},f_{2},f_{3})\\
\nonumber &\lesssim&\sum_{k'\in\mathbb{Z}}C^{\varepsilon}_{k'}
\sum_{\widetilde{P'}\in\widetilde{\mathbb{P}'_{k'}}}|I_{\widetilde{P'}}|\sum_{d''\in\mathbb{N}}
[\prod_{j=1}^{2}(energy_{j}((\langle \frac{\langle f_{j},\Phi^{j}_{\widetilde{P'_{j}}}\rangle}{|I_{\widetilde{P'}}|^{\frac{1}{2}}},\Phi^{j}_{P''_{j}}\rangle)_{P''\in\mathbb{P}''_{\widetilde{P'},d''}}))^{1-\theta_{j}}\\
 \nonumber &&\times(size_{j}((\langle \frac{\langle f_{j},\Phi^{j}_{\widetilde{P'_{j}}}\rangle}{|I_{\widetilde{P'}}|^{\frac{1}{2}}},\Phi^{j}_{P''_{j}}\rangle)_{P''\in\mathbb{P}''_{\widetilde{P'},d''}}))^{\theta_{j}}]
 (size_{3}((\langle \frac{\langle f_{3},\Phi^{3,\ell}_{\widetilde{P'_{3}}}\rangle}{|I_{\widetilde{P'}}|^{\frac{1}{2}}},\Phi^{3}_{P''_{3}}\rangle)_{P''\in\mathbb{P}''_{\widetilde{P'},d''}}))^{\theta_{3}}\\
 \nonumber && \times(energy_{3}((\langle \frac{\langle f_{3},\Phi^{3,\ell}_{\widetilde{P'_{3}}}\rangle}{|I_{\widetilde{P'}}|^{\frac{1}{2}}},\Phi^{3}_{P''_{3}}\rangle)_{P''\in\mathbb{P}''_{\widetilde{P'},d''}}))^{1-\theta_{3}}
\end{eqnarray}
for any $0\leq\theta_{1},\theta_{2},\theta_{3}<1$ with $\theta_{1}+\theta_{2}+\theta_{3}=1$.

To estimate the inner sum in the right-hand side of \eqref{4.28}, note that $I_{\vec{P}}\subseteq\Omega_{-10\ell}$, $P''\in\mathbb{P}''_{\widetilde{P'},d''}$ and $f_{3}$ satisfies \eqref{4.27}, we apply the \emph{size} estimates in Lemma \ref{size estimates} and get for each $\widetilde{P'}\in\widetilde{\mathbb{P}'_{k'}}$ and $d''\in\mathbb{N}$,
\begin{equation}\label{4.29}
  size_{1}((\langle \frac{\langle f_{1},\Phi^{1}_{\widetilde{P'_{1}}}\rangle}{|I_{\widetilde{P'}}|^{\frac{1}{2}}},\Phi^{1}_{P''_{1}}\rangle)_{P''\in\mathbb{P}''_{\widetilde{P'},d''}})
  \lesssim\sup_{P''\in\mathbb{P}''_{\widetilde{P'},d''}}\frac{1}{|I_{P''}|}\int_{\mathbb{R}}|\frac{\langle f_{1},\Phi^{1}_{\widetilde{P'_{1}}}\rangle}{|I_{\widetilde{P'}}|^{\frac{1}{2}}}|\widetilde{\chi}^{M}_{I_{P''}}dx
  \lesssim2^{11\ell+d''}|E_{1}|,
\end{equation}
\begin{equation}\label{4.30}
  size_{2}((\langle \frac{\langle f_{2},\Phi^{2}_{\widetilde{P'_{2}}}\rangle}{|I_{\widetilde{P'}}|^{\frac{1}{2}}},\Phi^{2}_{P''_{2}}\rangle)_{P''\in\mathbb{P}''_{\widetilde{P'},d''}})
  \lesssim\sup_{P''\in\mathbb{P}''_{\widetilde{P'},d''}}\frac{1}{|I_{P''}|}\int_{\mathbb{R}}|\frac{\langle f_{2},\Phi^{2}_{\widetilde{P'_{2}}}\rangle}{|I_{\widetilde{P'}}|^{\frac{1}{2}}}|\widetilde{\chi}^{M}_{I_{P''}}dx
  \lesssim2^{11\ell+d''}|E_{2}|,
\end{equation}
\begin{equation}\label{4.31}
  size_{3}((\langle \frac{\langle f_{3},\Phi^{3,\ell}_{\widetilde{P'_{3}}}\rangle}{|I_{\widetilde{P'}}|^{\frac{1}{2}}},\Phi^{3}_{P''_{3}}\rangle)_{P''\in\mathbb{P}''_{\widetilde{P'},d''}})
  \lesssim\sup_{P''\in\mathbb{P}''_{\widetilde{P'},d''}}\frac{1}{|I_{P''}|}\int_{\mathbb{R}}|\frac{\langle f_{3},\Phi^{3,\ell}_{\widetilde{P'_{3}}}\rangle}{|I_{\widetilde{P'}}|^{\frac{1}{2}}}|\widetilde{\chi}^{M}_{I_{P''}}dx
  \lesssim 2^{-(M-100)d''},
\end{equation}
where $M>0$ is arbitrarily large. Similar to the energy estimates obtained in \eqref{4.18}, \eqref{4.19} and \eqref{4.20}, by applying the \emph{energy} estimates in Lemma \ref{energy estimates} and H\"{o}lder estimates, we have for each $\widetilde{P'}\in\widetilde{\mathbb{P}'_{k'}}$ and $d''\in\mathbb{N}$,
\begin{equation}\label{4.32}
  energy_{1}((\langle \frac{\langle f_{1},\Phi^{1}_{\widetilde{P'_{1}}}\rangle}{|I_{\widetilde{P'}}|^{\frac{1}{2}}},\Phi^{1}_{P''_{1}}\rangle)_{P''\in\mathbb{P}''_{\widetilde{P'},d''}})\lesssim
  (\int_{E_{1}}\frac{\widetilde{\chi}^{100}_{I_{\widetilde{P'}}}(x_{1})}{|I_{\widetilde{P'}}|}dx_{1}dx_{2})^{\frac{1}{2}},
\end{equation}
\begin{equation}\label{4.33}
  energy_{2}((\langle \frac{\langle f_{2},\Phi^{2}_{\widetilde{P'_{2}}}\rangle}{|I_{\widetilde{P'}}|^{\frac{1}{2}}},\Phi^{2}_{P''_{2}}\rangle)_{P''\in\mathbb{P}''_{\widetilde{P'},d''}})\lesssim
  (\int_{E_{2}}\frac{\widetilde{\chi}^{100}_{I_{\widetilde{P'}}}(x_{1})}{|I_{\widetilde{P'}}|}dx_{1}dx_{2})^{\frac{1}{2}},
\end{equation}
\begin{equation}\label{4.34}
  energy_{3}((\langle \frac{\langle f_{3},\Phi^{3,\ell}_{\widetilde{P'_{3}}}\rangle}{|I_{\widetilde{P'}}|^{\frac{1}{2}}},\Phi^{3}_{P''_{3}}\rangle)_{P''\in\mathbb{P}''_{\widetilde{P'},d''}})\lesssim
  (\int_{E'_{3}}\frac{\widetilde{\chi}^{100,\ell}_{I_{\widetilde{P'}}}(x_{1})}{|I_{\widetilde{P'}}|}dx_{1}dx_{2})^{\frac{1}{2}},
\end{equation}
where the approximate cutoff function $\widetilde{\chi}^{100,\ell}_{I_{\widetilde{P'}}}(x_{1})$ decays rapidly (of order 100) away from the interval $I_{\widetilde{P'}}$ at scale $|I_{\widetilde{P'}}|$ and satisfies additional property that $supp \, \widetilde{\chi}^{100,\ell}_{I_{\widetilde{P'}}}\subseteq2^{\ell}I_{\widetilde{P'}}$.

Now we insert the size and energy estimates \eqref{4.29}-\eqref{4.34} into \eqref{4.28}, by using the estimates \eqref{4.13}, \eqref{4.22} and H\"{o}lder inequality, we get
\begin{eqnarray}\label{4.35}
% \nonumber to remove numbering (before each equation)
  \nonumber \Lambda^{\varepsilon,\ell}_{\vec{\mathbb{P}},II}(f_{1},f_{2},f_{3})&\lesssim&2^{11\ell}|E_{1}|^{\theta_{1}}|E_{2}|^{\theta_{2}}
  \sum_{k'\in\mathbb{Z}}C^{\varepsilon}_{k'}\sum_{d''\in\mathbb{N}}2^{-(M\theta_{3}-100)d''}\\
  &\times&(\sum_{\widetilde{P'}\in\widetilde{\mathbb{P}'_{k'}}}\int_{E_{1}}\widetilde{\chi}^{100}_{I_{\widetilde{{P'}}}}dx)^{\frac{1-\theta_{1}}{2}}
  (\sum_{\widetilde{P'}\in\widetilde{\mathbb{P}'_{k'}}}\int_{E_{2}}\widetilde{\chi}^{100}_{I_{\widetilde{{P'}}}}dx)^{\frac{1-\theta_{2}}{2}}
  (\sum_{\widetilde{P'}\in\widetilde{\mathbb{P}'_{k'}}}\int_{E'_{3}}\widetilde{\chi}^{100,\ell}_{I_{\widetilde{{P'}}}}dx)^{\frac{1-\theta_{3}}{2}}\\
 \nonumber &&\lesssim_{\varepsilon,\theta_{1},\theta_{2},\theta_{3},M}2^{11\ell}|E_{1}|^{\frac{1+\theta_{1}}{2}}|E_{2}|^{\frac{1+\theta_{2}}{2}}
  \sum_{d''\in\mathbb{N}}2^{-(M\theta_{3}-100)d''}.
\end{eqnarray}
for every $\ell\in\mathbb{N}$ and $0\leq\theta_{1},\theta_{2},\theta_{3}<1$ with $\theta_{1}+\theta_{2}+\theta_{3}=1$.

By taking $\theta_{1}$ sufficiently close to $1$ and $\theta_{2}$ sufficiently close to $0$, one can make the exponents $\frac{2}{1+\theta_{1}}=p_{1}$ strictly larger than $1$ and close to $1$ and $\frac{2}{1+\theta_{2}}=p_{2}$ strictly smaller than $2$ and close to $2$. The series over $d''\in\mathbb{N}$ in \eqref{4.35} is summable if we choose $M$ large enough (say, $M\simeq200\theta_{3}^{-1}$). We finally get the estimate
\begin{equation}\label{II}
  \Lambda^{\varepsilon,\ell}_{\vec{\mathbb{P}},II}(f_{1},f_{2},f_{3})\lesssim_{\varepsilon,p,p_{1},p_{2}}2^{11\ell}|E_{1}|^{\frac{1}{p_{1}}}|E_{2}|^{\frac{1}{p_{2}}}
\end{equation}
for every $\ell\in\mathbb{N}$, $\varepsilon>0$ and $p$, $p_{1}$, $p_{2}$ satisfy the hypothesis of Proposition \ref{RW-type equivalent}.

\subsection{Conclusions}
By inserting the estimates \eqref{4.9}, \eqref{I} and \eqref{II} into \eqref{4.3}, we finally get
\begin{equation}\label{4.36}
   |\Lambda^{\varepsilon}_{\vec{\mathbb{P}}}(f_{1},f_{2},f_{3})|\lesssim_{\varepsilon,p,p_{1},p_{2}}\sum_{\ell\in\mathbb{N}}
   2^{-100\ell}2^{12\ell}|E_{1}|^{\frac{1}{p_{1}}}|E_{2}|^{\frac{1}{p_{2}}}\lesssim_{\varepsilon,p,p_{1},p_{2}}|E_{1}|^{\frac{1}{p_{1}}}|E_{2}|^{\frac{1}{p_{2}}}
\end{equation}
for any $\varepsilon>0$, which completes the proof of Proposition \ref{RW-type equivalent} for the model operators $\Pi^{\varepsilon}_{\vec{\mathbb{P}}}$.

This concludes the proof of Theorem \ref{main1}.

\section{Proof of Theorem \ref{main2}}

In this section, we prove Theorem \ref{main2} by carrying out the proof of Proposition \ref{RW-type equivalent} for model operators $\widetilde{\Pi}^{\varepsilon}_{\vec{\mathbb{P}}}$ defined in \eqref{model2} with $\vec{\mathbb{P}}=\mathbb{P}'\times\mathbb{P}''$.

Fix indices $p_{1}$, $p_{2}$, $p$ as in the hypothesis of Proposition \ref{RW-type equivalent}. Fix arbitrary measurable sets $E_{1}$, $E_{2}$, $E_{3}$ of finite measure (by using the scaling invariance of $\widetilde{\Pi}^{\varepsilon}_{\vec{\mathbb{P}}}$, we can assume further that $|E_{3}|=1$). Our goal is to find $E'_{3}\subseteq E_{3}$ with $|E'_{3}|\simeq|E_{3}|=1$ such that, for any functions $|f_{1}|\leq\chi_{E_{1}}$, $|f_{2}|\leq\chi_{E_{2}}$ and $|f_{3}|\leq\chi_{E'_{3}}$, one has the corresponding trilinear forms $\widetilde{\Lambda}^{\varepsilon}_{\vec{\mathbb{P}}}(f_{1},f_{2},f_{3})$ defined by
\begin{equation}\label{5.1}
  \widetilde{\Lambda}^{\varepsilon}_{\vec{\mathbb{P}}}(f_{1},f_{2},f_{3}):=\int_{\mathbb{R}^{2}}\widetilde{\Pi}^{\varepsilon}_{\vec{\mathbb{P}}}(f_{1},f_{2})(x)f_{3}(x)dx
\end{equation}
satisfy estimates
\begin{equation}\label{5.2}
  |\widetilde{\Lambda}^{\varepsilon}_{\vec{\mathbb{P}}}(f_{1},f_{2},f_{3})|=|\sum_{\vec{P}\in\vec{\mathbb{P}}}\frac{\widetilde{C}^{\varepsilon}_{Q_{\vec{P}}}}{|I_{\vec{P}}|^{\frac{1}{2}}}
  \langle f_{1},\Phi^{1}_{\vec{P}_{1}}\rangle\langle f_{2},\Phi^{2}_{\vec{P}_{2}}\rangle\langle f_{3},\Phi^{3}_{\vec{P}_{3}}\rangle|\lesssim_{\varepsilon,p,p_{1},p_{2}}|E_{1}|^{\frac{1}{p_{1}}}|E_{2}|^{\frac{1}{p_{2}}},
\end{equation}
where $p_{1}$ is larger than but close to $1$, while $p_{2}$ is smaller than but close to $2$.

We define the exceptional set
\begin{equation}\label{5.3}
  \Omega:=\{x\in\mathbb{R}^{2}:MM(\frac{\chi_{E_{1}}}{|E_{1}|})(x)>C\}
  \cup\{x\in\mathbb{R}^{2}:MM(\frac{\chi_{E_{2}}}{|E_{2}|})(x)>C\}.
\end{equation}
It is clear that $|\Omega|<\frac{1}{10}$ if $C$ is a large enough constant, which we fix from now on. Then, we define $E'_{3}:=E_{3}\setminus\Omega$ and observe that $|E'_{3}|\simeq1$.

Now we estimate the trilinear form $\widetilde{\Lambda}^{\varepsilon}_{\vec{\mathbb{P}}}(f_{1},f_{2},f_{3})$ defined in \eqref{5.1} by two terms as follows:
\begin{eqnarray}\label{5.4}
% \nonumber to remove numbering (before each equation)
 \nonumber |\widetilde{\Lambda}^{\varepsilon}_{\vec{\mathbb{P}}}(f_{1},f_{2},f_{3})|&\lesssim& \sum_{\substack{\vec{P}\in\vec{\mathbb{P}}:\\I_{\vec{P}}\cap\Omega^{c}\neq\emptyset}}
  \frac{|\widetilde{C}^{\varepsilon}_{Q_{\vec{P}}}|}{|I_{\vec{P}}|^{\frac{1}{2}}}
  |\langle f_{1},\Phi^{1}_{\vec{P}_{1}}\rangle||\langle f_{2},\Phi^{2}_{\vec{P}_{2}}\rangle||\langle f_{3},\Phi^{3}_{\vec{P}_{3}}\rangle| \\
  &&+\sum_{\substack{\vec{P}\in\vec{\mathbb{P}}:\\I_{\vec{P}}\cap\Omega^{c}=\emptyset}}
  \frac{|\widetilde{C}^{\varepsilon}_{Q_{\vec{P}}}|}{|I_{\vec{P}}|^{\frac{1}{2}}}
  |\langle f_{1},\Phi^{1}_{\vec{P}_{1}}\rangle||\langle f_{2},\Phi^{2}_{\vec{P}_{2}}\rangle||\langle f_{3},\Phi^{3}_{\vec{P}_{3}}\rangle|\\
 \nonumber &=:& \widetilde{\Lambda}^{\varepsilon}_{\vec{\mathbb{P}},I}(f_{1},f_{2},f_{3})+\widetilde{\Lambda}^{\varepsilon}_{\vec{\mathbb{P}},II}(f_{1},f_{2},f_{3}).
\end{eqnarray}

\subsection{Estimates for trilinear form $\widetilde{\Lambda}^{\varepsilon}_{\vec{\mathbb{P}},I}(f_{1},f_{2},f_{3})$}
We can decompose the collection $\widetilde{\mathbb{P}'}$ of tri-tiles into
\begin{equation}\label{5.5}
  \mathbb{P}'=\bigcup_{k'\in\mathbb{Z}}\mathbb{P}'_{k'},
\end{equation}
where
\begin{equation}\label{5.6}
  \mathbb{P}'_{k'}:=\{P'\in\mathbb{P}': \, \ell(Q_{P'})=2^{k'}\}.
\end{equation}
As a consequence, we can split the trilinear form $\widetilde{\Lambda}^{\varepsilon}_{\vec{\mathbb{P}},I}(f_{1},f_{2},f_{3})$ into
\begin{eqnarray}\label{5.7}
% \nonumber to remove numbering (before each equation)
  \widetilde{\Lambda}^{\varepsilon}_{\vec{\mathbb{P}},I}(f_{1},f_{2},f_{3})&=&\sum_{k'\in\mathbb{Z}}
  \sum_{\substack{\vec{P}\in\mathbb{P}'_{k'}\times\mathbb{P}'': \\ I_{\vec{P}}\cap\Omega^{c}\neq\emptyset}}
  |\widetilde{C}^{\varepsilon}_{Q_{\vec{P}}}|\frac{|I_{P'}|}{|I_{P''}|^{\frac{1}{2}}}
  |\langle \frac{\langle f_{1},\Phi^{1}_{P'_{1}}\rangle}{|I_{P'}|^{\frac{1}{2}}},\Phi^{1}_{P''_{1}}\rangle|\\
  \nonumber && \quad\quad\quad\quad\quad\quad\quad\quad\quad \times|\langle \frac{\langle f_{2},\Phi^{2}_{P'_{2}}\rangle}{|I_{P'}|^{\frac{1}{2}}},\Phi^{2}_{P''_{2}}\rangle|
  |\langle \frac{\langle f_{3},\Phi^{3}_{P'_{3}}\rangle}{|I_{P'}|^{\frac{1}{2}}},\Phi^{3}_{P''_{3}}\rangle|.
\end{eqnarray}

Note that by Lemma \ref{Fourier coefficient2}, we can estimate the Fourier coefficients $\widetilde{C}^{\varepsilon}_{Q_{\vec{P}}}:=\widetilde{C}^{\varepsilon}_{Q_{\vec{P}},\vec{0},\vec{0},\vec{0}}$ for each $\vec{P}\in\mathbb{P}'_{k'}\times\mathbb{P}''$ ($k'\in\mathbb{Z}$) by
\begin{equation}\label{5.8}
  |\widetilde{C}^{\varepsilon}_{Q_{\vec{P}}}|\lesssim \widetilde{C}^{\varepsilon}_{k'}:=\langle k'\rangle^{-(1+\varepsilon)}=(1+|k'|^{2})^{-\frac{1+\varepsilon}{2}}.
\end{equation}
For each fixed $P'\in\mathbb{P}'$, we define the sub-collection $\mathbb{P}''_{P'}$ of $\mathbb{P}''$ by
\begin{equation*}
  \mathbb{P}''_{P'}:=\{P''\in\mathbb{P}'': \, I_{\vec{P}}\cap\Omega^{c}\neq\emptyset\}.
\end{equation*}
Therefore, by using Proposition \ref{tile norm estimates}, we derive the following estimates
\begin{eqnarray}\label{5.9}
% \nonumber to remove numbering (before each equation)
\nonumber \widetilde{\Lambda}^{\varepsilon}_{\vec{\mathbb{P}},I}(f_{1},f_{2},f_{3})&\lesssim&\sum_{k'\in\mathbb{Z}}\widetilde{C}^{\varepsilon}_{k'}
\sum_{P'\in\mathbb{P}'_{k'}}|I_{P'}|[\prod_{j=1}^{2}(energy_{j}((\langle \frac{\langle f_{j},\Phi^{j}_{P'_{j}}\rangle}{|I_{P'}|^{\frac{1}{2}}},\Phi^{j}_{P''_{j}}\rangle)_{P''\in\mathbb{P}''_{P'}}))^{1-\theta_{j}}\\
  &\times&(size_{j}((\langle \frac{\langle f_{j},\Phi^{j}_{P'_{j}}\rangle}{|I_{P'}|^{\frac{1}{2}}},\Phi^{j}_{P''_{j}}\rangle)_{P''\in\mathbb{P}''_{P'}}))^{\theta_{j}}]
 (size_{3}((\langle \frac{\langle f_{3},\Phi^{3}_{P'_{3}}\rangle}{|I_{P'}|^{\frac{1}{2}}},\Phi^{3}_{P''_{3}}\rangle)_{P''\in\mathbb{P}''_{P'}}))^{\theta_{3}}\\
 \nonumber && \quad\quad\quad\quad\quad\quad\quad\quad\quad\quad\quad\quad \times(energy_{3}((\langle \frac{\langle f_{3},\Phi^{3}_{P'_{3}}\rangle}{|I_{P'}|^{\frac{1}{2}}},\Phi^{3}_{P''_{3}}\rangle)_{P''\in\mathbb{P}''_{P'}}))^{1-\theta_{3}}
\end{eqnarray}
for any $0\leq\theta_{1},\theta_{2},\theta_{3}<1$ with $\theta_{1}+\theta_{2}+\theta_{3}=1$.

To estimate the right-hand side of \eqref{5.9}, note that $I_{\vec{P}}\cap\Omega^{c}\neq\emptyset$ and $supp \, f_{3}\subseteq E'_{3}$, we apply the \emph{size} estimates in Lemma \ref{size estimates} and get for each $P'\in\mathbb{P}'_{k'}$ and $j=1,2,3$,
\begin{equation}\label{5.10}
  size_{j}((\langle \frac{\langle f_{j},\Phi^{j}_{P'_{j}}\rangle}{|I_{P'}|^{\frac{1}{2}}},\Phi^{j}_{P''_{j}}\rangle)_{P''\in\mathbb{P}''_{P'}})
  \lesssim\sup_{P''\in\mathbb{P}''_{P'}}\frac{1}{|I_{P''}|}\int_{\mathbb{R}}|\frac{\langle f_{j},\Phi^{j}_{P'_{j}}\rangle}{|I_{P'}|^{\frac{1}{2}}}|\widetilde{\chi}^{M}_{I_{P''}}dx
  \lesssim|E_{j}|,
\end{equation}
where $M>0$ is sufficiently large. By applying the \emph{energy} estimates in Lemma \ref{energy estimates}, we have for each $P'\in\mathbb{P}'_{k'}$ and $j=1,2,3$,
\begin{equation}\label{5.11}
  energy_{j}((\langle \frac{\langle f_{j},\Phi^{j}_{P'_{j}}\rangle}{|I_{P'}|^{\frac{1}{2}}},\Phi^{j}_{P''_{j}}\rangle)_{P''\in\mathbb{P}''_{P'}})\lesssim
  \frac{1}{|I_{P'}|^{\frac{1}{2}}}(\int_{\mathbb{R}}|\langle f_{j},\Phi^{j}_{P'_{j}}\rangle|^{2}dx_{2})^{\frac{1}{2}}.
\end{equation}

Now we insert the size and energy estimates \eqref{5.10}, \eqref{5.11} into \eqref{5.9} and get
\begin{equation}\label{5.12}
\widetilde{\Lambda}^{\varepsilon}_{\vec{\mathbb{P}},I}(f_{1},f_{2},f_{3})\lesssim|E_{1}|^{\theta_{1}}|E_{2}|^{\theta_{2}}
\sum_{k'\in\mathbb{Z}}\widetilde{C}^{\varepsilon}_{k'}\sum_{P'\in\mathbb{P}'_{k'}}
\{\prod_{j=1}^{3}(\int_{\mathbb{R}}|\langle f_{j},\Phi^{j}_{P'_{j}}\rangle|^{2}dx_{2})^{\frac{1-\theta_{j}}{2}}\}.
\end{equation}
Observe that for any different tri-tiles $\bar{P'}\in\mathbb{P}'_{k'}$ and $\bar{\bar{P'}}\in\mathbb{P}'_{k'}$, one has $I_{\bar{P'}}\cap I_{\bar{\bar{P'}}}=\emptyset$, or otherwise, one has $I_{\bar{P'}}=I_{\bar{\bar{P'}}}$ but $\omega_{\bar{P'_{j}}}\cap\omega_{\bar{\bar{P'_{j}}}}=\emptyset$ for every $j=1,2,3$. By taking advantage of such orthogonality in $L^{2}$ of the wave packets $\Phi^{j}_{P'_{j}}$ corresponding to the tiles $P'_{j}$ ($j=1,2,3$), one has that for any function $F\in L^{2}(\mathbb{R})$ and $k'\in\mathbb{Z}$,
\begin{eqnarray}\label{Bessel-1}
% \nonumber to remove numbering (before each equation)
  \nonumber \|\sum_{P'\in\mathbb{P}'_{k'}}\langle F,\Phi^{j}_{P'_{j}}\rangle\Phi^{j}_{P'_{j}}\|^{2}_{L^{2}}
  &\leq&\sum_{\substack{\bar{P'}, \, \bar{\bar{P'}}\in\mathbb{P}'_{k'}: \\ \omega_{\bar{P}'_{j}}=\omega_{\bar{\bar{P}}'_{j}}; \, I_{\bar{P'}}\cap I_{\bar{\bar{P'}}}=\emptyset}}|\langle F,\Phi^{j}_{\bar{P}'_{j}}\rangle||\langle F,\Phi^{j}_{\bar{\bar{P}}'_{j}}\rangle||\langle\Phi^{j}_{\bar{P}'_{j}},\Phi^{j}_{\bar{\bar{P}}'_{j}}\rangle|\\
  &\lesssim&2^{k'}\sum_{\bar{P'}\in\mathbb{P}'_{k'}}|\langle F,\Phi^{j}_{\bar{P}'_{j}}\rangle|^{2}
  \sum_{\substack{\bar{\bar{P'}}\in\mathbb{P}'_{k'}: \\ \omega_{\bar{P}'_{j}}=\omega_{\bar{\bar{P}}'_{j}}; \, I_{\bar{P'}}\cap I_{\bar{\bar{P'}}}=\emptyset}}|\langle\widetilde{\chi}^{1000}_{I_{\bar{P'}}},\widetilde{\chi}^{1000}_{I_{\bar{\bar{P'}}}}\rangle|\\
 \nonumber &\lesssim&\sum_{\bar{P'}\in\mathbb{P}'_{k'}}|\langle F,\Phi^{j}_{\bar{P}'_{j}}\rangle|^{2}\sum_{\substack{\bar{\bar{P'}}\in\mathbb{P}'_{k'}: \\ \omega_{\bar{P}'_{j}}=\omega_{\bar{\bar{P}}'_{j}}; \, I_{\bar{P'}}\cap I_{\bar{\bar{P'}}}=\emptyset}}(1+\frac{dist(I_{\bar{P'}},I_{\bar{\bar{P'}}})}{|I_{\bar{P'}}|})^{-100}\\
  \nonumber &\lesssim&\sum_{P'\in\mathbb{P}'_{k'}}|\langle F,\Phi^{j}_{P'_{j}}\rangle|^{2},
\end{eqnarray}
from which we deduce the following Bessel-type inequality
\begin{eqnarray}\label{Bessel-2}
% \nonumber to remove numbering (before each equation)
  \sum_{P'\in\mathbb{P}'_{k'}}|\langle F,\Phi^{j}_{P'_{j}}\rangle|^{2}&=&|\langle\sum_{P'\in\mathbb{P}'_{k'}}\langle F,\Phi^{j}_{P'_{j}}\rangle\Phi^{j}_{P'_{j}},F\rangle|\\
  \nonumber &\leq&\|\sum_{P'\in\mathbb{P}'_{k'}}\langle F,\Phi^{j}_{P'_{j}}\rangle\Phi^{j}_{P'_{j}}\|_{L^{2}}\cdot\|F\|_{L^{2}}\lesssim\|F\|^{2}_{L^{2}},
\end{eqnarray}
where the implicit constants in the bounds are independent of $k'\in\mathbb{Z}$. Then, we can use Bessel-type inequality \eqref{Bessel-2} and H\"{o}lder inequality to estimate the inner sum in the right-hand side of \eqref{5.12} by
\begin{eqnarray}\label{5.13}
% \nonumber to remove numbering (before each equation)
  && \sum_{P'\in\mathbb{P}'_{k'}}
\{\prod_{j=1}^{3}(\int_{\mathbb{R}}|\langle f_{j},\Phi^{j}_{P'_{j}}\rangle|^{2}dx_{2})^{\frac{1-\theta_{j}}{2}}\}\lesssim
\prod_{j=1}^{3}(\int_{\mathbb{R}}\sum_{P'\in\mathbb{P}'_{k'}}|\langle f_{j},\Phi^{j}_{P'_{j}}\rangle|^{2}dx_{2})^{\frac{1-\theta_{j}}{2}}\\
  \nonumber &\lesssim&\prod_{j=1}^{3}\|f_{j}\|^{1-\theta_{j}}_{L^{2}(\mathbb{R}^{2})}\lesssim|E_{1}|^{\frac{1-\theta_{1}}{2}}|E_{2}|^{\frac{1-\theta_{2}}{2}}.
\end{eqnarray}
Combining the estimates \eqref{5.8}, \eqref{5.12} and \eqref{5.13}, we arrive at
\begin{equation}\label{5.14}
\widetilde{\Lambda}^{\varepsilon}_{\vec{\mathbb{P}},I}(f_{1},f_{2},f_{3})\lesssim
|E_{1}|^{\theta_{1}}|E_{2}|^{\theta_{2}}|E_{1}|^{\frac{1-\theta_{1}}{2}}|E_{2}|^{\frac{1-\theta_{2}}{2}}\sum_{k'\in\mathbb{Z}}\widetilde{C}^{\varepsilon}_{k'}
\lesssim_{\varepsilon,\theta_{1},\theta_{2},\theta_{3}}|E_{1}|^{\frac{1+\theta_{1}}{2}}|E_{2}|^{\frac{1+\theta_{2}}{2}}
\end{equation}
for any $0\leq\theta_{1},\theta_{2},\theta_{3}<1$ with $\theta_{1}+\theta_{2}+\theta_{3}=1$.

By taking $\theta_{1}$ sufficiently close to $1$ and $\theta_{2}$ sufficiently close to $0$, one can make the exponents $\frac{2}{1+\theta_{1}}=p_{1}$ strictly larger than $1$ and close to $1$ and $\frac{2}{1+\theta_{2}}=p_{2}$ strictly smaller than $2$ and close to $2$. We finally get the estimate
\begin{equation}\label{5.15}
  \widetilde{\Lambda}^{\varepsilon}_{\vec{\mathbb{P}},I}(f_{1},f_{2},f_{3})\lesssim_{\varepsilon,p,p_{1},p_{2}}|E_{1}|^{\frac{1}{p_{1}}}|E_{2}|^{\frac{1}{p_{2}}}
\end{equation}
for every $\varepsilon>0$ and $p$, $p_{1}$, $p_{2}$ satisfy the hypothesis of Proposition \ref{RW-type equivalent}.

\subsection{Estimates for trilinear form $\widetilde{\Lambda}^{\varepsilon}_{\vec{\mathbb{P}},II}(f_{1},f_{2},f_{3})$}
For each fixed $P'\in\mathbb{P}'$, we define the corresponding sub-collection of $\mathbb{P}''$ by
\begin{equation*}
  \mathbb{P}''_{P'}:=\{P''\in\mathbb{P}'': \, I_{\vec{P}}\subseteq\Omega\},
\end{equation*}
then we can decompose the collection $\mathbb{P}''_{P'}$ further, as follows:
\begin{equation}\label{5.16}
  \mathbb{P}''_{P'}=\bigcup_{\mu\in\mathbb{N}}\mathbb{P}''_{P',\mu},
\end{equation}
where
\begin{equation}\label{5.17}
  \mathbb{P}''_{P',\mu}:=\{P''\in\mathbb{P}''_{P'}: \, Dil_{2^{\mu}}(I_{P'}\times I_{P''})\subseteq\Omega\}
\end{equation}
and $\mu$ is maximal with this property. By $Dil_{2^{\mu}}(I_{\vec{P}})$ we denote the rectangle having the same center as the original $I_{\vec{P}}$ but whose side-lengths are $2^{\mu}$ times larger.

Now we apply both the decompositions of $\widetilde{\mathbb{P}'}$ and $\mathbb{P}''_{P'}$ defined in \eqref{5.5}, \eqref{5.16} at the same time, and split the trilinear form $\widetilde{\Lambda}^{\varepsilon}_{\vec{\mathbb{P}},II}(f_{1},f_{2},f_{3})$ into
\begin{eqnarray}\label{5.18}
% \nonumber to remove numbering (before each equation)
 \nonumber \widetilde{\Lambda}^{\varepsilon}_{\vec{\mathbb{P}},II}(f_{1},f_{2},f_{3})&=&\sum_{k'\in\mathbb{Z}}
  \sum_{P'\in\mathbb{P}'_{k'}}
  |\widetilde{C}^{\varepsilon}_{Q_{\vec{P}}}||I_{P'}|\sum_{\mu\in\mathbb{N}}\sum_{P''\in\mathbb{P}''_{P',\mu}}\frac{1}{|I_{P''}|^{\frac{1}{2}}}\\
  && \quad\quad\quad\quad\quad \times|\langle \frac{\langle f_{1},\Phi^{1}_{P'_{1}}\rangle}{|I_{P'}|^{\frac{1}{2}}},\Phi^{1}_{P''_{1}}\rangle||\langle \frac{\langle f_{2},\Phi^{2}_{P'_{2}}\rangle}{|I_{P'}|^{\frac{1}{2}}},\Phi^{2}_{P''_{2}}\rangle|
  |\langle \frac{\langle f_{3},\Phi^{3}_{P'_{3}}\rangle}{|I_{P'}|^{\frac{1}{2}}},\Phi^{3}_{P''_{3}}\rangle|.
\end{eqnarray}

In the inner sum of the above \eqref{5.18}, since $Dil_{2^{\mu}}(I_{P'}\times I_{P''})\subseteq\Omega$, and $supp \, f_{3}\subseteq E'_{3}\subseteq\mathbb{R}^{2}\setminus\Omega$, we get that
\begin{equation}\label{5.19}
  |f_{3}|\leq\chi_{E'_{3}}\chi_{(Dil_{2^{\mu}}(I_{P'}\times I_{P''}))^{c}}
  =\chi_{E'_{3}}\{\chi_{(2^{\mu}I_{P'})^{c}}+\chi_{(2^{\mu}I_{P''})^{c}}-\chi_{(2^{\mu}I_{P'})^{c}}\chi_{(2^{\mu}I_{P''})^{c}}\},
\end{equation}
and hence we can assume hereafter in this subsection that
\begin{equation}\label{5.20}
   |f_{3}|\leq\chi_{E'_{3}}\chi_{(2^{\mu}I_{P'})^{c}},
\end{equation}
and the other two terms can be handled similarly.

By using Proposition \ref{tile norm estimates} and \eqref{5.8}, we derive from \eqref{5.18} the following estimates
\begin{eqnarray}\label{5.21}
% \nonumber to remove numbering (before each equation)
 &&\widetilde{\Lambda}^{\varepsilon}_{\vec{\mathbb{P}},II}(f_{1},f_{2},f_{3})\\
\nonumber &\lesssim&\sum_{k'\in\mathbb{Z}}\widetilde{C}^{\varepsilon}_{k'}
\sum_{P'\in\mathbb{P}'_{k'}}|I_{P'}|\sum_{\mu\in\mathbb{N}}
[\prod_{j=1}^{2}(energy_{j}((\langle \frac{\langle f_{j},\Phi^{j}_{P'_{j}}\rangle}{|I_{P'}|^{\frac{1}{2}}},\Phi^{j}_{P''_{j}}\rangle)_{P''\in\mathbb{P}''_{P',\mu}}))^{1-\theta_{j}}\\
 \nonumber &&\times(size_{j}((\langle \frac{\langle f_{j},\Phi^{j}_{P'_{j}}\rangle}{|I_{P'}|^{\frac{1}{2}}},\Phi^{j}_{P''_{j}}\rangle)_{P''\in\mathbb{P}''_{P',\mu}}))^{\theta_{j}}]
 (size_{3}((\langle \frac{\langle f_{3},\Phi^{3}_{P'_{3}}\rangle}{|I_{P'}|^{\frac{1}{2}}},\Phi^{3}_{P''_{3}}\rangle)_{P''\in\mathbb{P}''_{P',\mu}}))^{\theta_{3}}\\
 \nonumber && \times(energy_{3}((\langle \frac{\langle f_{3},\Phi^{3}_{P'_{3}}\rangle}{|I_{P'}|^{\frac{1}{2}}},\Phi^{3}_{P''_{3}}\rangle)_{P''\in\mathbb{P}''_{P',\mu}}))^{1-\theta_{3}}
\end{eqnarray}
for any $0\leq\theta_{1},\theta_{2},\theta_{3}<1$ with $\theta_{1}+\theta_{2}+\theta_{3}=1$.

To estimate the inner sum in the right-hand side of \eqref{5.21}, note that $I_{\vec{P}}\subseteq\Omega$, $P''\in\mathbb{P}''_{P',\mu}$ and $f_{3}$ satisfies \eqref{5.20}, we apply the \emph{size} estimates in Lemma \ref{size estimates} and get for each $P'\in\mathbb{P}'_{k'}$ and $\mu\in\mathbb{N}$,
\begin{equation}\label{5.22}
  size_{1}((\langle \frac{\langle f_{1},\Phi^{1}_{P'_{1}}\rangle}{|I_{P'}|^{\frac{1}{2}}},\Phi^{1}_{P''_{1}}\rangle)_{P''\in\mathbb{P}''_{P',\mu}})
  \lesssim\sup_{P''\in\mathbb{P}''_{P',\mu}}\frac{1}{|I_{P''}|}\int_{\mathbb{R}}|\frac{\langle f_{1},\Phi^{1}_{P'_{1}}\rangle}{|I_{P'}|^{\frac{1}{2}}}|\widetilde{\chi}^{M}_{I_{P''}}dx
  \lesssim2^{2\mu}|E_{1}|,
\end{equation}
\begin{equation}\label{5.23}
  size_{2}((\langle \frac{\langle f_{2},\Phi^{2}_{P'_{2}}\rangle}{|I_{P'}|^{\frac{1}{2}}},\Phi^{2}_{P''_{2}}\rangle)_{P''\in\mathbb{P}''_{P',\mu}})
  \lesssim\sup_{P''\in\mathbb{P}''_{P',\mu}}\frac{1}{|I_{P''}|}\int_{\mathbb{R}}|\frac{\langle f_{2},\Phi^{2}_{P'_{2}}\rangle}{|I_{P'}|^{\frac{1}{2}}}|\widetilde{\chi}^{M}_{I_{P''}}dx
  \lesssim2^{2\mu}|E_{2}|,
\end{equation}
\begin{equation}\label{5.24}
  size_{3}((\langle \frac{\langle f_{3},\Phi^{3}_{P'_{3}}\rangle}{|I_{P'}|^{\frac{1}{2}}},\Phi^{3}_{P''_{3}}\rangle)_{P''\in\mathbb{P}''_{P',\mu}})
  \lesssim\sup_{P''\in\mathbb{P}''_{P',\mu}}\frac{1}{|I_{P''}|}\int_{\mathbb{R}}|\frac{\langle f_{3},\Phi^{3}_{P'_{3}}\rangle}{|I_{P'}|^{\frac{1}{2}}}|\widetilde{\chi}^{M}_{I_{P''}}dx
  \lesssim 2^{-N\mu},
\end{equation}
where $M>0$ and $N>0$ are arbitrarily large. By applying the \emph{energy} estimates in Lemma \ref{energy estimates}, we have for each $P'\in\mathbb{P}'_{k'}$, $\mu\in\mathbb{N}$ and $j=1,2,3$,
\begin{equation}\label{5.25}
  energy_{j}((\langle \frac{\langle f_{j},\Phi^{j}_{P'_{j}}\rangle}{|I_{P'}|^{\frac{1}{2}}},\Phi^{j}_{P''_{j}}\rangle)_{P''\in\mathbb{P}''_{P',\mu}})\lesssim
  \frac{1}{|I_{P'}|^{\frac{1}{2}}}(\int_{\mathbb{R}}|\langle f_{j},\Phi^{j}_{P'_{j}}\rangle|^{2}dx_{2})^{\frac{1}{2}}.
\end{equation}

Now we insert the size and energy estimates \eqref{5.22}-\eqref{5.25} into \eqref{5.21}, by using the estimates \eqref{5.8} and \eqref{5.13}, we derive that
\begin{eqnarray}\label{5.26}
% \nonumber to remove numbering (before each equation)
  \nonumber \widetilde{\Lambda}^{\varepsilon}_{\vec{\mathbb{P}},II}(f_{1},f_{2},f_{3})&\lesssim&|E_{1}|^{\theta_{1}}|E_{2}|^{\theta_{2}}
\sum_{k'\in\mathbb{Z}}\widetilde{C}^{\varepsilon}_{k'}\sum_{\mu\in\mathbb{N}}2^{-(N\theta_{3}-2)\mu}\sum_{P'\in\mathbb{P}'_{k'}}
\{\prod_{j=1}^{3}(\int_{\mathbb{R}}|\langle f_{j},\Phi^{j}_{P'_{j}}\rangle|^{2}dx_{2})^{\frac{1-\theta_{j}}{2}}\}\\
  &&\lesssim_{\varepsilon,\theta_{1},\theta_{2},\theta_{3},N}|E_{1}|^{\frac{1+\theta_{1}}{2}}|E_{2}|^{\frac{1+\theta_{2}}{2}}
  \sum_{\mu\in\mathbb{N}}2^{-(N\theta_{3}-2)\mu}.
\end{eqnarray}
for every $0\leq\theta_{1},\theta_{2},\theta_{3}<1$ with $\theta_{1}+\theta_{2}+\theta_{3}=1$.

By taking $\theta_{1}$ sufficiently close to $1$ and $\theta_{2}$ sufficiently close to $0$, one can make the exponents $\frac{2}{1+\theta_{1}}=p_{1}$ strictly larger than $1$ and close to $1$ and $\frac{2}{1+\theta_{2}}=p_{2}$ strictly smaller than $2$ and close to $2$. The series over $\mu\in\mathbb{N}$ in \eqref{5.26} is summable if we choose $N$ large enough (say, $N\simeq4\theta_{3}^{-1}$). We finally get the estimate
\begin{equation}\label{5.27}
  \widetilde{\Lambda}^{\varepsilon}_{\vec{\mathbb{P}},II}(f_{1},f_{2},f_{3})\lesssim_{\varepsilon,p,p_{1},p_{2}}|E_{1}|^{\frac{1}{p_{1}}}|E_{2}|^{\frac{1}{p_{2}}}
\end{equation}
for any $\varepsilon>0$ and $p$, $p_{1}$, $p_{2}$ satisfy the hypothesis of Proposition \ref{RW-type equivalent}.

\subsection{Conclusions}
By inserting the estimates \eqref{5.15} and \eqref{5.27} into \eqref{5.4}, we finally get
\begin{equation}\label{5.28}
   |\widetilde{\Lambda}^{\varepsilon}_{\vec{\mathbb{P}}}(f_{1},f_{2},f_{3})|\lesssim_{\varepsilon,p,p_{1},p_{2}}|E_{1}|^{\frac{1}{p_{1}}}|E_{2}|^{\frac{1}{p_{2}}}
\end{equation}
for any $\varepsilon>0$, which completes the proof of Proposition \ref{RW-type equivalent} for the model operators $\widetilde{\Pi}^{\varepsilon}_{\vec{\mathbb{P}}}$.

This concludes the proof of Theorem \ref{main2}.\\

\end{document}